\pgfplotsset{compat=1.16}
\tikzset{
    cross/.pic = {
    \draw[rotate = 45] (-#1,0) -- (#1,0);
    \draw[rotate = 45] (0,-#1) -- (0, #1);
    }
}
\pgfplotsset{compat=newest}
\pgfplotsset{compat=newest}
	\definecolor{ao(english)}{rgb}{0.0, 0.5, 0.0}
\definecolor{gray}{gray}{0.4}
\theoremstyle{plain}
\newtheorem{theorem}{Theorem}[section]
\newtheorem{corollary}[theorem]{Corollary}
\newtheorem{definition}[theorem]{Definition}
\newtheorem{lemma}[theorem]{Lemma}
\newtheorem*{problem*}{Problem}
\theoremstyle{definition}
\newtheorem{remark}[theorem]{Remark}
\def\Item$#1${\item $\displaystyle#1$
   \hfill\refstepcounter{equation}(\theequation)}
\theoremstyle{plain} 
\theoremstyle{definition}
\theoremstyle{remark} 
\numberwithin{equation}{section}
\date{}
\newcommand{\rr}{\mathbb{R}}
\newcommand{\sss}{\mathbb{S}}
\newcommand{\e}{\epsilon}
\newcommand{\spc}{\ \ \ \ }
\newcommand{\inn}{\textnormal{in}\ }
\newcommand{\onn}{\textnormal{on}\ }
\newcommand{\andd}{\textnormal{and}}
\newcommand{\inte}{\textnormal{int}\ }
\newcommand{\dpar}{\mathcal{D}\textnormal{-parabolic}}
\newcommand{\bra}[1]{\{#1\}}
\newcommand{\loc}{_{loc}}
\newcommand{\system}[2]{\left\{\begin{array}{#1} #2 \end{array} \right.}
\newcommand{\ric}{\textnormal{Ric}}
\newcommand{\lap}{\Delta}
\newcommand{\dint}[1]{\ \textnormal{d}#1}
\newcommand{\ubra}[1]{^{#1}}
\newcommand{\dbra}[1]{_{#1}}
\newcommand{\norm}[2]{\left|\left| #1 \right| \right|_{#2}}
\newcommand{\dive}[1]{\textnormal{div}\left( #1 \right)}
\newcommand{\operl}{\mathcal{L}}
\newcommand{\operm}{\mathcal{M}}
\newcommand{\dvol}{\ \textnormal{dv}}
\tikzstyle{mybox} = [draw=black, very thick, rectangle, rounded corners, inner ysep=5pt, inner xsep=5pt]
\begin{document}

\begin{abstract}
The necessity of a Maximum Principle arises naturally when one is interested in the study of qualitative properties of solutions to partial differential equations. In general, to ensure the validity of these kinds of principles one has to consider some additional assumptions on the ambient manifold or on the differential operator. The present work aims to address, using both of these approaches, the problem of proving Maximum Principles for second order, elliptic operators acting on unbounded Riemannian domains under Dirichlet boundary conditions. Hence there is a natural division of this article in two distinct and standalone sections.
\end{abstract}

\title{Maximum principles in unbounded Riemannian domains}
\author{Andrea Bisterzo}
\address{Universit\`a degli Studi di Milano-Bicocca\\ Dipartimento di Matematica e Applicazioni \\ Via Cozzi 55, 20126 Milano - ITALY}
\email{a.bisterzo@campus.unimib.it}
\maketitle


\section{Introduction}
In this work we address the validity of the maximum principle for bounded solutions to the problem
\begin{align*}
\system{ll}{\Delta u\geq c u & \inn \Omega \\ u \leq 0 & \onn \partial \Omega}
\end{align*}
where $\Omega$ is an unbounded domain inside the Riemannian manifold $(M,g)$. We shall present two kinds of results where the common root is the assumption that $\Omega$ is ``small'' from the viewpoint of the operator. The first result requires that the underlying manifold has a special structure (warped product cylinder) and the smallness of the domain is encoded in its (Dirichlet) parabolicity. The second result has a more abstract flavour as it holds in any Riemannian manifold provided that the domain is small in a spectral sense.


In the Euclidean setting a classical Maximum Principle for unbounded domains contained in the complement of a cone states as follows (for a reference, see \cite{berestycki1997monotonicity})

\begin{theorem}\label{Thm:EuclideanMaximumPrinciple}
Consider a possibly unbounded domain $\Omega \subset \rr^n$, $n\geq 2$, whose closure is contained in the complement of a non-degenerate solid cone $\mathcal{C}\subset \rr^n$. If $u\in C^0(\overline{\Omega})\cap W^{1,2}\loc (\Omega)$ is a distributional solution to
\begin{align*}
\system{ll}{-\lap u + c\ u \leq 0 & in\ \Omega \\ u\leq 0 & on\ \partial \Omega \\ \sup_\Omega u <+\infty,}
\end{align*}
where $0\leq c\in C^0(\Omega)$, then
\begin{align*}
u\leq 0 \spc in\ \Omega.
\end{align*}
\end{theorem}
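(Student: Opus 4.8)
The plan is to get rid of the (possibly very irregular and unbounded) boundary $\partial\Omega$ at once, by extending the positive part of $u$ by zero to all of $\rr^n$, and then to reduce the statement to a Phragm\'en--Lindel\"of phenomenon for nonnegative, bounded, subharmonic functions that vanish on a solid cone. We may assume $M:=\sup_\Omega u>0$, otherwise there is nothing to prove, and then $0<M<+\infty$ by hypothesis. Set $v:=\max(u,0)$ on $\overline\Omega$ and $v:=0$ on $\rr^n\setminus\Omega$. Since $u\leq 0$ on $\partial\Omega$ the two prescriptions agree there, so $v\in C^0(\rr^n)$, $0\leq v\leq M$, and $\sup_{\rr^n}v=M$.

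The first step is to show that $v$ is subharmonic on all of $\rr^n$. On $\Omega$ this uses $c\geq 0$: on the open set $\{u>0\}$ one has $\lap u\geq cu\geq 0$ (distributionally), so $u$, being continuous, is subharmonic there; hence at every point of $\Omega$ the continuous function $v$ coincides, on a small enough ball, with either the subharmonic function $u$ (where $u>0$) or with $0$ (where $u<0$), while at points where $u=0$ the sub-mean value inequality is trivial because $v\geq 0$. To carry subharmonicity across $\partial\Omega$ I would argue by comparison on balls: fix $x\in\Omega$ and $R>0$, put $B:=B_R(x)$, and let $h$ be the harmonic function on $B$ with $h=v$ on $\partial B$, so that $h\geq 0$ on $\overline B$ by the classical maximum principle. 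On the bounded open set $D:=B\cap\Omega$ the function $v-h$ is subharmonic and, on $\partial D\subseteq(\partial B\cap\overline\Omega)\cup(\partial\Omega\cap\overline B)$, one has $v-h\leq 0$: it vanishes on $\partial B$ by construction, and on $\partial\Omega\cap\overline B$ one has $v=0\leq h$ because $u\leq 0$ there. The maximum principle on $D$ then gives $v\leq h$ in $D$, hence $v(x)\leq h(x)=\frac{1}{|\partial B|}\int_{\partial B}v$; together with the trivial bound $v(x)=0\leq\frac{1}{|\partial B_R(x)|}\int_{\partial B_R(x)}v$ for $x\notin\Omega$, this shows that $v$ satisfies the sub-mean value inequality on every ball, i.e. $v$ is subharmonic on $\rr^n$.

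The second step exploits that $v$ vanishes on the cone. Put the vertex of $\mathcal{C}$ at the origin (the Laplacian is translation invariant) and write $\omega:=\sigma(\mathcal{C}\cap\sss^{n-1})/\sigma(\sss^{n-1})$ for its normalized aperture, which is strictly positive precisely because $\mathcal{C}$ is non-degenerate. Since $\overline\Omega\cap\mathcal{C}=\emptyset$ we have $v\equiv 0$ on $\mathcal{C}$, hence $v\leq M$ times the indicator of $\rr^n\setminus\mathcal{C}$, and therefore, for every $x\in\rr^n$ and every $R>0$,
\begin{align*}
v(x)\ \leq\ \frac{1}{|B_R(x)|}\int_{B_R(x)}v\ \leq\ M\,\frac{|B_R(x)\setminus\mathcal{C}|}{|B_R(x)|}\ =\ M\left(1-\frac{|B_R(x)\cap\mathcal{C}|}{|B_R(x)|}\right).
\end{align*}
Since $B_R(x)\supseteq B_{R-|x|}(0)$ and $\mathcal{C}$ is a cone based at the origin, $|B_R(x)\cap\mathcal{C}|/|B_R(x)|\geq\omega(1-|x|/R)^n\to\omega$ as $R\to+\infty$. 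Letting $R\to+\infty$ yields $v(x)\leq M(1-\omega)$ for every $x$, so $M=\sup_{\rr^n}v\leq M(1-\omega)$; as $\omega>0$ this forces $M\leq 0$, contradicting $M>0$. Therefore $\sup_\Omega u\leq 0$, that is $u\leq 0$ in $\Omega$.

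The main obstacle is the first step: because $\Omega$ is unbounded and no regularity is assumed on $\partial\Omega$, one cannot simply test the weak differential inequality against cut-offs supported in $\overline\Omega$, and the global subharmonicity of the zero-extension has to be obtained by the hands-on comparison-with-balls argument above --- and this is exactly where the boundary condition $u\leq 0$ on $\partial\Omega$ (and the sign of $c$) enters. Once the first step is secured, the elementary density estimate for the cone closes the argument in one stroke; note that for $n\geq 3$ some such extra input is genuinely needed, since bounded-above subharmonic functions on $\rr^n$ need not be constant. A more classical route to the second step replaces the measure-theoretic bound by a comparison with the positive superharmonic barrier $w(r\theta)=r^{-\alpha}\phi_1(\theta)$, where $\phi_1>0$ is a first Dirichlet eigenfunction of the spherical domain $\sss^{n-1}\setminus(\mathcal{C}\cap\sss^{n-1})$ with eigenvalue $\lambda_1>0$ and $\alpha>0$ is chosen small enough that $\lap w=-r^{-\alpha-2}\phi_1\big(\alpha(n-2-\alpha)+\lambda_1\big)\leq 0$; the density argument, however, bypasses the construction of $\phi_1$ entirely.
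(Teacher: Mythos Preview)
Your proof is correct and takes a genuinely different route from the paper's. The paper proceeds via Lemma~\ref{Lem:BCNLemma}: it constructs an explicit positive barrier $\phi$ on $\Omega$ satisfying $-\Delta\phi+c\phi\geq 0$ and $\phi\to+\infty$ at infinity---namely $\phi=\ln r+C_0$ when $n=2$ and $\phi=r^\alpha\psi(\theta)$ when $n\geq 3$, with $\psi$ the first Dirichlet eigenfunction of $\Delta^{\sss^{n-1}}$ on $\sss^{n-1}\setminus\mathcal{C}$ and $\alpha(\alpha+n-2)=\lambda_1$---and then applies the substitution $w=u/\phi$ together with the classical maximum principle on bounded truncations. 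Your argument instead extends $u^+$ by zero to a global bounded subharmonic function on $\rr^n$ (the ball-comparison step handling $\partial\Omega$ is clean and correct) and finishes with the elementary volume--density estimate that the cone occupies a fixed fraction $\omega>0$ of every large ball, forcing $M\leq M(1-\omega)$. Your method is more self-contained in $\rr^n$ and bypasses the spherical eigenvalue problem entirely; the paper's barrier construction, on the other hand, is precisely what drives the generalizations to warped products in Section~\ref{Sec:Maximum principle and cone}, since your density step leans on Euclidean scale invariance and has no obvious Riemannian analogue. One minor remark on your closing aside: the decaying barrier $r^{-\alpha}\phi_1$ is not the paper's choice---the paper takes the \emph{growing} root $r^{\alpha}$ with $\alpha>0$, exactly so that $u/\phi\to 0$ at infinity in Lemma~\ref{Lem:BCNLemma}; the decaying variant would not close that scheme.
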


The proof is essentially based on the fact that the Euclidean space is a model manifold, that is, the manifold obtained by quotienting the warped product $([0,+\infty)\times \sss^{n-1}, \textnormal{d}r\otimes \textnormal{d}r + r^2 g^{\sss^{n-1}})$ with respect to the relation that identifies $\{0\}\times \sss^{n-1}$ with a point $o$, called \textit{pole}, and then extending smoothly the metric in $o$.

Influenced by the model structure of $\rr^n$, in Section \ref{Sec:Maximum principle and cone} we obtain a transposition of the previous theorem to warped product manifolds satisfying certain (radial) curvature conditions and replacing the notion of \textit{cone} with the notion of \textit{strip}. The assumptions on the geometry of $M$ and on $\Omega$ are needed to construct a suitable barrier function, crucial for the validity of the result. We stress that the main theorem of Section \ref{Sec:Maximum principle and cone} will be first stated in the context of (Dirichlet-)parabolic manifolds and then reinterpreted in the language of maximum principles. This is the content of Corollary \ref{Cor:Maximum principle cone}.


On the other hand, if we want to recover a maximum principle without requiring any assumption on the structure of the manifold (and of the domain), then we have to consider some additional hypotheses on the differential operator and on its spectrum. These kinds of assumptions are natural if one compares with the compact case.

\begin{theorem}
Let $(M,g)$ a Riemannian manifold, $\Omega\subseteq M$ a bounded domain and $\mathcal{L}$ a linear elliptic operator with (sufficiently) regular coefficients. Then, the Maximum Principle holds for $\mathcal{L}$ in $\Omega$ with Dirichlet boundary conditions if and only if the first Dirichlet eigenvalue of $\mathcal{L}$ on $\Omega$ is positive.
\end{theorem}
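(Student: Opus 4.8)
The statement is classical (it is the non-compact-free heart of the theory developed by Berestycki, Nirenberg and Varadhan, and by Protter--Weinberger); unpacking it, ``the Maximum Principle holds'' means that every (weak) $u$ with $\mathcal{L}u\ge 0$ in $\Omega$ and $u\le 0$ on $\partial\Omega$ satisfies $u\le 0$ in $\Omega$. I would prove the two implications by combining the spectral theory of $\mathcal{L}$ on the bounded domain $\Omega$ with, respectively, an energy identity (in the self-adjoint case $\mathcal{L}u=\Delta u-cu$ relevant to the rest of the paper) and the classical pointwise maximum principle (in general). \emph{Step 0 (spectral preliminaries).} Because $\Omega$ is bounded and $\partial\Omega$ and the coefficients of $\mathcal{L}$ are regular enough, the Dirichlet resolvent of $\mathcal{L}$ on $L^2(\Omega)$ is compact (Rellich--Kondrachov), hence $-\mathcal{L}$ has discrete spectrum; let $\lambda_1=\lambda_1(\mathcal{L},\Omega)$ be its bottom eigenvalue, characterized in the self-adjoint case by $\lambda_1=\inf\{\int_\Omega(|\nabla\psi|^2+c\psi^2)\dvol:\ \psi\in C_c^\infty(\Omega),\ \int_\Omega\psi^2\dvol=1\}$. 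By the Krein--Rutman theorem together with elliptic regularity and the strong maximum principle (equivalently, Harnack), $\lambda_1$ is simple and admits an eigenfunction $\varphi_1$ with $\varphi_1>0$ in $\Omega$, $\varphi_1=0$ on $\partial\Omega$ and $\mathcal{L}\varphi_1=-\lambda_1\varphi_1$. I would also record the two standard perturbation facts: $\lambda_1$ is non-increasing under domain inclusion and continuous along exhaustions, so $\lambda_1(\mathcal{L},\Omega)>0$ forces $\lambda_1(\mathcal{L},\Omega')>0$ for some slightly larger domain $\Omega'$ with $\Omega\ssubset\Omega'$.

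\emph{Step 1 (necessity).} Suppose $\lambda_1\le 0$. Then $u:=\varphi_1$ solves $\mathcal{L}u=-\lambda_1\varphi_1\ge 0$ in $\Omega$ and $u=0\le 0$ on $\partial\Omega$, while $u=\varphi_1>0$ in $\Omega$. Thus the maximum principle fails, so its validity forces $\lambda_1>0$.

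\emph{Step 2 (sufficiency).} Assume $\lambda_1>0$ and let $u$ solve $\mathcal{L}u\ge 0$ in $\Omega$, $u\le 0$ on $\partial\Omega$, with enough regularity that $u^+\in W_0^{1,2}(\Omega)$. In the self-adjoint case, testing the weak inequality $\int_\Omega(\nabla u\cdot\nabla\phi+c\,u\phi)\dvol\le 0$ (valid for all $0\le\phi\in W_0^{1,2}(\Omega)$) with $\phi=u^+$ gives $\int_\Omega(|\nabla u^+|^2+c(u^+)^2)\dvol\le 0$, whereas the variational characterization of $\lambda_1$ gives $\int_\Omega(|\nabla u^+|^2+c(u^+)^2)\dvol\ge\lambda_1\int_\Omega(u^+)^2\dvol$; since $\lambda_1>0$ this yields $u^+\equiv 0$, i.e. $u\le 0$. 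For a non-self-adjoint $\mathcal{L}$ I would instead use the positive principal eigenfunction $\varphi_1$ of the enlarged domain $\Omega'$ above, so that $\varphi_1>0$ on $\overline{\Omega}$ and $\mathcal{L}\varphi_1=-\lambda_1(\mathcal{L},\Omega')\varphi_1<0$ there; then $\sigma:=u/\varphi_1\in C^0(\overline{\Omega})$ satisfies $\sigma\le 0$ on $\partial\Omega$, and the identity $\mathcal{L}u=\varphi_1\bigl(a^{ij}\partial_{ij}\sigma+\tilde b^i\partial_i\sigma\bigr)+\sigma\,\mathcal{L}\varphi_1$ (with $\tilde b$ absorbing the first-order terms) evaluated at a hypothetical positive interior maximum $x_0$ of $\sigma$ — where $\nabla\sigma(x_0)=0$ and $(a^{ij}\partial_{ij}\sigma)(x_0)\le 0$ by ellipticity — gives $0\le\mathcal{L}u(x_0)\le\sigma(x_0)\,\mathcal{L}\varphi_1(x_0)<0$, a contradiction; hence $\sigma\le 0$, i.e. $u\le 0$.

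\emph{Main obstacle.} The analytic content is entirely in Step 0: producing a \emph{positive} principal eigenfunction is precisely where the boundedness of $\Omega$ is used (it gives compactness of the resolvent, hence a discrete bottom eigenvalue), where the regularity of $\mathcal{L}$ enters (Harnack / the strong maximum principle, to get $\varphi_1>0$ in the interior), and, in the non-self-adjoint case, where one must replace the spectral theorem by the Krein--Rutman theorem. The second delicate point is the monotonicity-plus-continuity of $\lambda_1$ under domain perturbation, needed to obtain a comparison function that stays strictly positive up to $\partial\Omega$; once those are in hand, Steps 1--2 are routine.
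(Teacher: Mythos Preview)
The paper does not prove this theorem; it appears in the Introduction as a classical motivating fact, used only to set up the question of extending it to unbounded domains. There is therefore no paper proof to compare against.

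Your argument is correct and standard. The necessity direction (Step~1) is immediate, and both of your sufficiency arguments are valid. It is worth observing that your non-self-adjoint route---dividing by a positive eigenfunction $\varphi_1$ and applying the classical maximum principle to the quotient $\sigma=u/\varphi_1$---is exactly the mechanism the paper later deploys for the \emph{unbounded} case: Lemma~2.1 uses the same quotient trick pointwise, and the lemmas leading to Theorem~3.15 use an integrated version of it (multiply the differential inequality satisfied by $\sigma$ against $\sigma^+\rho_k^2$ and integrate). Your device of passing to a slightly larger domain $\Omega'\Supset\Omega$ so that $\varphi_1>0$ on $\overline\Omega$ is one clean way to avoid the boundary degeneracy of $\varphi_1$; the paper's unbounded argument instead keeps the eigenfunction vanishing on $\partial\Omega$ and controls the resulting boundary term via Hopf's lemma. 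The one step in your outline that genuinely requires care is the \emph{outer} continuity of $\lambda_1$ (that $\lambda_1(\Omega)>0$ forces $\lambda_1(\Omega')>0$ for some $\Omega'\Supset\Omega$); this holds for sufficiently regular bounded domains but is not a triviality, and you are right to flag it as part of the ``main obstacle''.
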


Inspired by this fact, one might wonder if this property can be generalized to unbounded domains. This is true in the Euclidean space according to the very interesting work \cite{nordmann2021maximum} by Samuel Nordmann. In Section \ref{Sec:ABP and Maximum Principle} we shall extend Nordmann result to Riemannian domains.

To this end, we first obtain an ABP-like inequality for the differential operator $\mathcal{L}$ acting on bounded smooth domains. Next, we will use it to construct a couple of generalized eigenelements $(\lambda_1, \varphi)$ for $\mathcal{L}$ on possibly nonsmooth bounded domains and, using an exhaustion argument, on unbounded smooth domains. Following the proof obtained by Nordmann, in Theorem \ref{Thm:UnboundedMaximumPrinciple} we get a maximum principle for the operator $\mathcal{L}$ acting on an unbounded smooth domain $\Omega$ of a general Riemannian manifold $(M,g)$ under the assumption that $\lambda_1>0$.

In the last section we will apply Theorem \ref{Thm:UnboundedMaximumPrinciple} to generalize some of the results obtained in \cite{bisterzo2022symmetry} by the author together with Stefano Pigola.

\section{Maximum principle for unbounded domains in the complement of a strip}\label{Sec:Maximum principle and cone}

The already cited Theorem \ref{Thm:EuclideanMaximumPrinciple} is a milestone in the Euclidean analysis of PDEs. A possible proof makes use of the next classical lemma (see \cite[Lemma 2.1]{berestycki1997monotonicity}), which is based on the existence of a suitable positive $(-\Delta+c)$-subharmonic function. We state this result in a more general setting.

\begin{lemma}\label{Lem:BCNLemma}
Let $(M,g)$ be a complete manifold. Given a (possibly unbounded) domain $\Omega \subset M$, suppose $u\in W^{1,2}_{\loc}(\Omega)\cap C^0(\overline{\Omega})$ is a distributional solution to
\begin{align*}
\system{ll}{-\Delta u+c\ u\leq 0 & in\ \Omega \\ u\leq 0 & on\ \partial \Omega \\ \sup_\Omega u<+\infty ,}
\end{align*}
where $0\leq c\in C^0(\Omega)$. If there exists a function $\phi \in C^2(\Omega)\cap C^0 (\overline{\Omega})$ (possibly depending on $u$) satisfying
\begin{align*}
\system{ll}{-\Delta \phi + c\ \phi \geq 0 & in\ \Omega \\ \phi>0 & in\ \overline{\Omega}}
\end{align*}
and
\begin{align*}
\underset{p\in \Omega}{\limsup_{d^M (p,p_0)\to +\infty,}}  \ \frac{u(p)}{\phi(p)}\leq 0
\end{align*}
for any fixed $p_0\in \Omega$ (where $d^M$ is the intrinsic distance on $M$), then $u\leq 0$ in $\Omega$.
\end{lemma}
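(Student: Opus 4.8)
\emph{Overall strategy.} I would argue by contradiction using the quotient $\sigma:=u/\phi$, which is well defined and lies in $C^0(\overline{\Omega})$ because $\phi$ is continuous and strictly positive on $\overline{\Omega}$. Put $m:=\sup_{\Omega}\sigma$ and assume, contrary to the claim, that $m>0$. The heart of the argument is to show that this positive supremum is attained at an \emph{interior} point of $\Omega$. Choose a maximizing sequence $(p_k)\subset\Omega$ with $\sigma(p_k)\to m$. If $d^M(p_k,p_0)$ were unbounded, a subsequence would satisfy $d^M(p_{k_j},p_0)\to+\infty$ while $\sigma(p_{k_j})\to m>0$, contradicting the growth hypothesis; hence $(p_k)$ lies in a metric ball $\overline{B_R(p_0)}$, which is compact since $(M,g)$ is complete (Hopf--Rinow). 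Extracting a subsequence, $p_k\to p_\infty\in\overline{\Omega}$. On $\partial\Omega$ we have $u\leq 0<\phi$, so $\sigma\leq 0$ there, and since $\sigma\in C^0(\overline{\Omega})$ this forces $p_\infty\notin\partial\Omega$. Therefore $p_\infty\in\Omega$ and $\sigma(p_\infty)=m$.

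\emph{Applying the maximum principle.} Set $w:=u-m\phi$, which has the same regularity as $u$, namely $w\in W^{1,2}_{loc}(\Omega)\cap C^0(\overline{\Omega})$. By the definition of $m$ we have $w\leq 0$ in $\Omega$, while $w(p_\infty)=0$. Combining the two differential inequalities and using $m>0$,
\begin{align*}
-\Delta w+c\,w=(-\Delta u+c\,u)-m\,(-\Delta\phi+c\,\phi)\leq 0\qquad\text{in }\Omega
\end{align*}
in the distributional sense. Thus $v:=-w$ is a nonnegative, locally bounded distributional supersolution of $-\Delta+c$ with $c\geq 0$, and it vanishes at the interior point $p_\infty$. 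The strong maximum principle (equivalently, the weak Harnack inequality for nonnegative supersolutions, which is a purely local fact and so carries over verbatim to the Riemannian setting) then yields $v\equiv 0$ on the connected component of $\Omega$ containing $p_\infty$; since a domain is connected, $u\equiv m\phi$ on all of $\Omega$, hence on $\overline{\Omega}$ by continuity.

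\emph{Reaching the contradiction.} Evaluating the identity $u\equiv m\phi$ on $\partial\Omega$ gives $0\geq u=m\phi>0$, which is absurd. (In the degenerate case $\partial\Omega=\emptyset$, so that $\Omega=M$ is complete and boundaryless, the identity $\sigma\equiv m>0$ contradicts instead the growth hypothesis as soon as $M$ is noncompact.) Hence $m\leq 0$, i.e. $u\leq 0$ in $\Omega$, as claimed.

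\emph{Main difficulty.} The only genuinely delicate point is the interior attainment of the supremum in the first step: this is precisely where all three hypotheses are used in an essential way --- completeness, via Hopf--Rinow, to produce a limit point of a bounded maximizing sequence; the boundary condition, to exclude that limit point from $\partial\Omega$; and the growth control $\limsup_{d^M(p,p_0)\to+\infty}u/\phi\leq 0$, to prevent the maximizing sequence from escaping to infinity. Once interior attainment is established, the remainder is a standard invocation of the strong maximum principle for the subsolution $u-m\phi$ of $-\Delta+c$.
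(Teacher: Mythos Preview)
Your argument is correct and complete. It does, however, follow a genuinely different route from the paper's.

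The paper works directly with the quotient $w:=u/\phi$ and computes the PDE it satisfies: a short calculation yields
\[
-\Delta w-2\,g\!\left(\nabla w,\frac{\nabla\phi}{\phi}\right)+w\,\frac{-\Delta\phi+c\,\phi}{\phi}\leq 0
\]
in the distributional sense, an inequality for a drift--diffusion operator with nonnegative zeroth-order coefficient. The growth hypothesis is then used quantitatively: for each $\epsilon>0$ one has $w\leq\epsilon$ outside a large ball, hence $w\leq\epsilon$ on the boundary of $\Omega_\epsilon:=\Omega\cap B_{R_\epsilon}(p_0)$, and the \emph{weak} maximum principle on this bounded domain gives $w\leq\epsilon$ in $\Omega_\epsilon$. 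Letting $\epsilon\to 0$ finishes the proof.

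Your approach, by contrast, never computes the conjugated operator. You use the three hypotheses (completeness via Hopf--Rinow, the boundary inequality, and the growth control) to force interior attainment of $m=\sup_\Omega u/\phi$, and then apply the \emph{strong} maximum principle to the linear combination $u-m\phi$, which is a nonpositive subsolution of $-\Delta+c$ with an interior zero. What you gain is that the PDE you invoke is for the original operator $-\Delta+c$ rather than for a conjugated one, so no product-rule computation is needed; what you pay is the appeal to the strong maximum principle (equivalently the weak Harnack inequality) rather than merely the weak one, and the compactness argument to locate the interior maximum. Both trade-offs are entirely standard, and the two proofs are of comparable length and difficulty.
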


\begin{proof}
Let $w:=\frac{u}{\phi}\in W^{1,2}\loc(\Omega)\cap C^0(\overline{\Omega})$. We have
\begin{align*}
\lap w + 2 g\left(\nabla w, \frac{\nabla \phi}{\phi} \right)+ w \frac{\lap \phi}{\phi}= \frac{\lap u}{\phi}\geq c\ \frac{u}{\phi}=c\ w \spc \spc \inn \mathcal{D}'
\end{align*}
i.e.
\begin{align*}
\mathcal{L}w:= -\lap w-2g\left(\nabla w, \frac{\nabla \phi}{\phi} \right)+ w \frac{-\lap \phi + c\ \phi}{\phi}\leq 0 \spc \spc \inn \mathcal{D}'.
\end{align*}
By assumption, for any $\e>0$ and any fixed $p_0\in M$ there exists $0<R_\e\xrightarrow{\e \to 0} \infty$ so that $w(p)\leq \e$ for every $p\in \Omega$ satisfying $d^M (p,p_0)\geq R_\e$. Hence, for $\Omega_\e:=B^{M}_{R_\e}(p_0)\cap \Omega$ we get
\begin{align*}
\system{rl}{\mathcal{L}w\leq 0 & \textnormal{in any connected component of}\ \Omega_\e\\
w\leq \e & \textnormal{on the boundary of any connected component of}\ \Omega_\e.}
\end{align*}
Since $\frac{-\lap \phi + c \phi}{\phi}\geq 0$, by the standard maximum principle $w\leq \e$ in any connected component of $\Omega_\e$. Letting $\e\to 0$ we get $w\leq 0$ in $\Omega$, i.e. $u\leq 0$ in $\Omega$.
\end{proof}

As said above, the previous lemma is the key ingredient to obtain the unbounded maximum principle contained in Theorem \ref{Thm:EuclideanMaximumPrinciple}. Indeed, for any bounded above supersolution $u$ we only have to find a barrier function $\phi$ satisfying the assumptions of Lemma \ref{Lem:BCNLemma}. Observe that, since in Theorem \ref{Thm:EuclideanMaximumPrinciple} $u$ is assumed to be bounded above, the dependence of $\phi$ on $u$ may be bypassed just requiring that $\phi\xrightarrow[]{|x|\to +\infty}+\infty$.

It is precisely the presence of the cone $\mathcal{C}$ in the complement of $\Omega$ that allows us to easily construct $\phi$. 

\begin{proof}[Proof of Theorem \ref{Thm:EuclideanMaximumPrinciple}]
Consider the spherical coordinates $(r,\theta)$ on $\rr^n$ and set $\Lambda=\sss^{n-1} \setminus \mathcal{C}$. We define $\phi$ as the restriction to $\Omega$ of the function $\Phi:(0,+\infty)\times \Lambda\to \rr_{\geq 0}$ given by
\begin{align*}
\Phi(r,\theta)=\system{ll}{\ln(r)+C_0 & \textnormal{if}\ n=2\\ r^\alpha \psi(\theta) & \textnormal{if}\ n\geq 3,}
\end{align*}
where $\psi$ is the first Dirichlet eigenfunction of $\Delta^{\sss^{n-1}}\Big|_{\Lambda}$ with associated first eigenvalue $\lambda_1>0$ and $\alpha\in \rr$ satisfies the identity
\begin{align*}
\alpha(\alpha+n-2)-\lambda_1 =0.
\end{align*}
By the nodal domain theorem, it follows that $\phi>0$ in $\Omega$ and thus $(-\Delta+c)\phi \geq 0$. Moreover, by construction, $\phi$ diverges as $|x|\to +\infty$. By Lemma \ref{Lem:BCNLemma}, the claim follows.
\end{proof}

Using a different point of view, we can interpret Theorem \ref{Thm:EuclideanMaximumPrinciple} in terms of a the Dirichlet-parabolicity of the domain $\Omega$.

\begin{definition}
Given a Riemannian manifold $(M,g)$ without boundary, we say that a domain $\Omega \subseteq M$ is \textnormal{Dirichlet parabolic} ($\mathcal{D}$-\textnormal{parabolic}) if the unique bounded solution $u\in C^0(\overline{\Omega})\cap C^\infty(\Omega)$ to the problem
\begin{align*}
\system{ll}{-\lap u=0 & \inn \Omega\\ u=0 & \onn \partial \Omega}
\end{align*}
is the constant null function.
\end{definition}

\begin{remark}
Note that in the definition of $\dpar$ity the boundary of the manifold (domain) at hand does not necessarily have to be smooth.
\end{remark}

For an interesting work about Dirichlet parabolicity, containing a detailed overview about the topic, we suggest \cite{pessoa2017dirichlet}.

As an application of what done so far, we get that any domain $\Omega\subset \rr^n$ contained in the complement of a cone is $\dpar$. 

\begin{corollary}\label{Cor:EuclideanDPar}
If $\Omega \subset \rr^n$, $n\geq 2$, is a (possibly unbounded) domain whose closure is contained in the complement of a non-degenerate solid cone $\mathcal{C}\subset \rr^n$, then $\Omega$ is $\mathcal{D}$-parabolic.
\end{corollary}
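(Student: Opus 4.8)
The plan is to deduce Corollary \ref{Cor:EuclideanDPar} directly from Theorem \ref{Thm:EuclideanMaximumPrinciple} by a two-sided application of the maximum principle. Let $u\in C^0(\overline\Omega)\cap C^\infty(\Omega)$ be a bounded solution of $-\Delta u=0$ in $\Omega$ with $u=0$ on $\partial\Omega$; we must show $u\equiv 0$. The key observation is that $u$ is harmonic, hence it is simultaneously a bounded-above subsolution and a bounded-above supersolution of the relevant inequality with $c\equiv 0$.

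\smallskip

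First I would apply Theorem \ref{Thm:EuclideanMaximumPrinciple} to $u$ itself, with $c\equiv 0\in C^0(\Omega)$: since $-\Delta u=0\leq 0$ in $\Omega$, $u\leq 0=u|_{\partial\Omega}$ on $\partial\Omega$, and $\sup_\Omega u<+\infty$ because $u$ is bounded, the theorem yields $u\leq 0$ in $\Omega$. Next I would repeat the argument with $-u$ in place of $u$: the function $-u$ is likewise harmonic (so $-\Delta(-u)=0\leq 0$), vanishes on $\partial\Omega$, and is bounded above. Applying Theorem \ref{Thm:EuclideanMaximumPrinciple} once more gives $-u\leq 0$, i.e. $u\geq 0$ in $\Omega$. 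Combining the two inequalities forces $u\equiv 0$ in $\Omega$, which is exactly the definition of $\Omega$ being $\mathcal{D}$-parabolic.

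\smallskip

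There is essentially no obstacle here: the only points to check are the bookkeeping hypotheses of Theorem \ref{Thm:EuclideanMaximumPrinciple}, namely that the regularity $C^0(\overline\Omega)\cap C^\infty(\Omega)$ assumed in the definition of $\mathcal{D}$-parabolicity is stronger than the $C^0(\overline\Omega)\cap W^{1,2}_{\loc}(\Omega)$ required by the theorem (it is, since smooth functions are locally $W^{1,2}$), that $c\equiv 0$ is a legitimate choice of a nonnegative continuous potential, and that boundedness of $u$ gives the required one-sided bound $\sup_\Omega u<+\infty$ for both $u$ and $-u$. Since the closure of $\Omega$ is contained in the complement of the non-degenerate solid cone $\mathcal{C}$, the hypothesis of Theorem \ref{Thm:EuclideanMaximumPrinciple} is met, and the conclusion follows. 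One may optionally remark that the same argument, mutatis mutandis, shows that the warped-product analogue developed in Section \ref{Sec:Maximum principle and cone} yields $\mathcal{D}$-parabolicity for domains in the complement of a strip, which is the content of Corollary \ref{Cor:Maximum principle cone}.
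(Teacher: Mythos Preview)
Your proof is correct and follows exactly the paper's own argument: apply Theorem~\ref{Thm:EuclideanMaximumPrinciple} with $c\equiv 0$ to $u$ and to $-u$, then combine. One small slip in your closing aside: the $\mathcal{D}$-parabolicity of strip-complementary domains in warped products is the content of Theorem~\ref{Thm:DparabolicSigma}, not Corollary~\ref{Cor:Maximum principle cone} (which goes the other way, deducing the maximum principle from $\mathcal{D}$-parabolicity).
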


\begin{proof}
Fixed any bounded function $u\in C^0(\overline{\Omega})\cap C^\infty(\Omega)$ satisfying
\begin{align*}
\system{ll}{-\lap u=0 & \inn \Omega \\ u=0 & \onn \partial \Omega,}
\end{align*}
by Theorem \ref{Thm:EuclideanMaximumPrinciple} we get $u\leq 0$. Applying the same argument to $v=-u$, it also follows that $u\geq 0$, obtaining $u\equiv 0$.
\end{proof}

\subsection{From Euclidean space to warped products}
Clearly, previous construction is strongly based on the fact that the Euclidean space is a model manifold. Using this viewpoint, a natural question could be the following
\begin{align*}
\begin{array}{c}
Can\ we\ retrace\ what\ we\ have\ done\ so\ far\ to\ obtain\ a\ suitable\ barrier\ \phi\\ on\ any\ warped\ product\ manifold\ M=I\times_\sigma N?
\end{array}
\end{align*}

\begin{remark}
When we consider $\rr^n$ as a warped product manifold, the cone $\mathcal{C}$ (whose vertex coincides with the pole $o$) can be seen as a strip that extends along the "radial" direction.
\end{remark}

\begin{center}
\begin{tikzpicture}[scale=0.42]
\path[draw, red]  plot [smooth] coordinates{(-16,-5) (-15,-4) (-14,-5) (-13,-3) (-12,1) (-11,2) (-10,1) (-9,0) (-8,-2) (-7,-3) (-6.2,-1)};
\path[fill, red, opacity=0.4]  plot [smooth] coordinates{(-16,-5) (-15,-4) (-14,-5) (-13,-3) (-12,1) (-11,2) (-10,1) (-9,0) (-8,-2) (-7,-3) (-6.2,-1)} -- (-6.2,4.8) -- (-16,4.8) -- (-16,5);
\path[draw] (-10,0) arc(0:360:1 and 1);
\path (-10.5,3.4) node{$\Lambda$};
\path (-9.5, -3.3) node{$\mathcal{C}$};

\path[draw, dashed, opacity=0.5] (-9,0) arc(0:360: 2 and 2);
\path[draw] (-8,0) arc(0:360: 3 and 3);
\path[draw, dashed, opacity=0.5] (-7,0) arc(0:360: 4 and 4);
\path[draw, dashed, opacity=0.5] (-6,0) arc(0:360: 5 and 5);
\path[draw, dashed, opacity=0.5] (-5,0) arc(0:360: 6 and 6);
\fill[white] (-16,4.8)--(-6.2,4.8)--(-6.2,6.6)--(-16,6.6)--cycle;
\fill[white] (-16,-5)--(-6.2,-5)--(-6.2,-6.6)--(-16,-6.6)--cycle;
\fill[white] (-19,4.8)--(-19,-5)--(-15.6,-5)--(-15.6,4.8)--cycle;
\fill[white] (-6.2,4.8)--(-6.2,-4.8)--(-4.4,-4.8)--(-4.4,4.8)--cycle;
\path[draw,->] (-15.6,0)--(-6,0);
\path[red] (-5.5, 3) node{$\Omega$};
\path[fill, white] (-12,-5)--(-11,0)--(-10,-5)--(-12,-5);
\path[draw] (-12,-5)--(-11,0)--(-10,-5);
\path[fill, pattern=north west lines,pattern color=black, opacity=0.5] (-12,-5)--(-11,0)--(-10,-5)--(-11,-5);
\path[draw,->] (-11,-5)--(-11,5);
\end{tikzpicture}
\hfill
\begin{tikzpicture}[scale=0.42]
\path[draw] (0,0) arc(0:360:0.5 and 2);
\path[draw] (-0.5,2)--(10,3);
\path[draw] (-0.5,-2)--(10,-3);
\path[draw, name path=pathc] ({0.5*cos(20)-0.5},{2*sin(20)})--({0.75*cos(20)+10},{3*sin(20)});
\path[draw, name path=pathd] ({0.5*cos(-20)-0.5},{2*sin(-20)})--({0.75*cos(-20)+10},{3*sin(-20)});

\path[fill, red, opacity=0.2] (1,{2*(2+43)*sin(90)/(42)}) --  plot [smooth] coordinates{(1,{2*(2+43)*sin(90)/(42)})  (0.5,2) (0.2,1) (0.1, 0) (0.2,-1) (0.5,-2) (1,-{2*(2+43)*sin(90)/(42)})} -- (1,-{2*(2+43)*sin(90)/(42)}) -- (10,-{2*(20+43)/(42)}) -- (10,2) -- plot [smooth] coordinates{(10,2.8) (9, 2.5) (8,2) (7,1.6) (6.5, 2.5) (6,{2*(12+43)*sin(90)/(42)})} -- cycle;

\path[fill, white] (1,{2*(2+43)*sin(90)/(42)}) -- plot [smooth] coordinates{(1,{2*(2+43)*sin(90)/(42)}) (2,1) (3,1) (4,1) (5,1.5) (6,{2*(12+43)*sin(90)/(42)})} -- (6,{2*(12+43)*sin(90)/(42)}) -- cycle;

\path[fill, red, opacity=0.4] (1,{2*(2+43)*sin(90)/(42)}) -- plot [smooth] coordinates{(1,{2*(2+43)*sin(90)/(42)}) (2,1) (3,1) (4,1) (5,1.5) (6,{2*(12+43)*sin(90)/(42)})} -- (6,{2*(12+43)*sin(90)/(42)}) -- cycle;

\path[fill, white] (1,-{2*(2+43)*sin(90)/(42)}) -- plot [smooth] coordinates{(1,-{2*(2+43)*sin(90)/(42)}) (2, -2) (3,-1) (4,-1) (5,-1) (6,-1.1) (7,-1.2) (8,-1.2) (9,-1.5) (10,-1.7)} -- (10,-{2*(20+43)/(42)})-- cycle;

\path[fill, red, opacity=0.4] (1,-{2*(2+43)*sin(90)/(42)}) -- plot [smooth] coordinates{(1,-{2*(2+43)*sin(90)/(42)}) (2, -2) (3,-1) (4,-1) (5,-1) (6,-1.1) (7,-1.2) (8,-1.2) (9,-1.5) (10,-1.7)} -- (10,-{2*(20+43)/(42)})-- cycle;

\path[name path=patha] (1,0) arc(0:80: {0.5*(2+43)/(42)} and {2*(2+43)/(42)});
\path[name path=pathb] (1,0) arc(0:-80: {0.5*(2+43)/(42)} and {2*(2+43)/(42)});
\path (1.4,2.7) node{$\Lambda$};
\path[red] (3.4,3) node{$\Omega$};

\path[draw] (2,0) arc(0:80: {0.5*(4+43)/(42)} and {2*(4+43)/(42)});
\path[draw] (2,0) arc(0:-80: {0.5*(4+43)/(42)} and {2*(4+43)/(42)});

\path[draw, dashed, opacity=0.5] (3,0) arc(0:80: {0.5*(6+43)/(42)} and {2*(6+43)/(42)});
\path[draw, dashed, opacity=0.5] (3,0) arc(0:-80: {0.5*(6+43)/(42)} and {2*(6+43)/(42)});

\path[draw, dashed, opacity=0.5] (4,0) arc(0:80: {0.5*(8+43)/(42)} and {2*(8+43)/(42)});
\path[draw, dashed, opacity=0.5] (4,0) arc(0:-80: {0.5*(8+43)/(42)} and {2*(8+43)/(42)});

\path[draw, dashed, opacity=0.5] (5,0) arc(0:80: {0.5*(10+43)/(42)} and {2*(10+43)/(42)});
\path[draw, dashed, opacity=0.5] (5,0) arc(0:-80: {0.5*(10+43)/(42)} and {2*(10+43)/(42)});

\path[draw, dashed, opacity=0.5] (6,0) arc(0:80: {0.5*(12+43)/(42)} and {2*(12+43)/(42)});
\path[draw, dashed, opacity=0.5] (6,0) arc(0:-80: {0.5*(12+43)/(42)} and {2*(12+43)/(42)});

\path[draw, dashed, opacity=0.5] (7,0) arc(0:80: {0.5*(14+43)/(42)} and {2*(14+43)/(42)});
\path[draw, dashed, opacity=0.5] (7,0) arc(0:-80: {0.5*(14+43)/(42)} and {2*(14+43)/(42)});

\path[draw, dashed, opacity=0.5] (8,0) arc(0:80: {0.5*(16+43)/(42)} and {2*(16+43)/(42)});
\path[draw, dashed, opacity=0.5] (8,0) arc(0:-80: {0.5*(16+43)/(42)} and {2*(16+43)/(42)});

\path[draw, dashed, opacity=0.5] (9,0) arc(0:80: {0.5*(18+43)/(42)} and {2*(18+43)/(42)});
\path[draw, dashed, opacity=0.5] (9,0) arc(0:-80: {0.5*(18+43)/(42)} and {2*(18+43)/(42)});

\path[draw, dashed, opacity=0.5] (10,0) arc(0:80: {0.5*(20+43)/(42)} and {2*(20+43)/(42)});
\path[draw, dashed, opacity=0.5] (10,0) arc(0:-80: {0.5*(20+43)/(42)} and {2*(20+43)/(42)});

\path[draw, red] plot [smooth] coordinates{(1,{2*(2+43)*sin(90)/(42)}) (2,1) (3,1) (4,1) (5,1.5) (6,{2*(12+43)*sin(90)/(42)})};

\path[draw, dashed, red] plot [smooth] coordinates{(6,{2*(12+43)*sin(90)/(42)}) (6.5, 2.5) (7,1.6) (8,2) (9, 2.5) (10,2.8)};

\path[draw, dashed, red] plot [smooth] coordinates{(1,{2*(2+43)*sin(90)/(42)})  (0.5,2) (0.2,1) (0.1, 0) (0.2,-1) (0.5,-2) (1,-{2*(2+43)*sin(90)/(42)})};

\path[draw, red] plot [smooth] coordinates{(1,-{2*(2+43)*sin(90)/(42)}) (2, -2) (3,-1) (4,-1) (5,-1) (6,-1.1) (7,-1.2) (8,-1.2) (9,-1.5) (10,-1.7)};

\path [name intersections={of = patha and pathc}];
\coordinate (A) at (intersection-1);

\path [name intersections={of = pathb and pathd}];
\coordinate (B) at (intersection-1);

\path[fill, white!80!red] (A) --({0.75*cos(20)+10},{3*sin(20)}) -- ({0.75*cos(-20)+10},{3*sin(-20)}) -- (B)--cycle;

\path[fill, pattern=north west lines,pattern color=black, opacity=0.5] ({0.5*cos(20)-0.5},{2*sin(20)})--({0.5*cos(20)-0.5},{2*sin(20)}) arc(20:-20:0.5 and 2) -- ({0.75*cos(-20)+10},{3*sin(-20)}) -- ({0.75*cos(20)+10},{3*sin(20)}) -- cycle;
\path[fill,white] (10,3)--(10,-3)--(11,-3)--(11,3)--cycle;

\path[draw, dashed] ({2-(4+43)/(42)},0) arc(180:260: {0.5*(4+43)/(42)} and {2*(4+43)/(42)});
\path[draw, dashed] ({2-(4+43)/(42)},0) arc(180:100: {0.5*(4+43)/(42)} and {2*(4+43)/(42)});
\path[draw,dashed,->] (-0.5,0)--(11,0) node[anchor=south]{$r$};
\path[draw] (-2,0)--(-0.5,0);
\path[draw] (-2,-0.1)--(-2,0.1) node[anchor=south]{$0$};
\path[draw,dashed] (-0.5,-2)--(-0.5,2);
\path[draw,white] (-2,-6.5) circle (1pt);
\end{tikzpicture}
\end{center}

If we want to retrace the same construction step by step, we need the existence (and the positiveness) of the first eigenfunction $\phi$ of $\Delta^N\big|_{\Lambda}$. In particular, this means that the manifold $N$ has to be compact. Whence, assuming that $\phi$ takes the form $\phi(r,\xi)=h(r) \psi(\xi)$ with $\psi$ nonnegative first Dirichlet eigenfunction on a fixed subdomain $\Lambda\subset N$, by the structure of the Laplace-Belatrami operator acting on warped product manifolds, the inequality $(-\Delta+c)\phi \geq 0$ reduces to
\begin{align}\label{Eq:RadialLaplacianApp}
\partial_r^2 h + (n-1) \frac{\sigma'}{\sigma} \partial_r h - \left(\frac{\lambda_1}{\sigma^2}+c\right) h \leq 0 
\end{align}
and, in general, it is not easy to prove the existence of a positive solution to \eqref{Eq:RadialLaplacianApp} that satisfies the asymptotic condition $h\xrightarrow[]{r\to +\infty}+\infty$. This means that we are able to generalize Theorem \ref{Thm:EuclideanMaximumPrinciple} only requiring strong assumptions on the manifold at hand.

\subsection{$\mathcal{D}$-parabolicity and maximum principle for unbounded domains of warped product manifolds with compact leaves}\label{Sec:WarpedCompact}
Let $M=\rr_{\geq 0}\times_\sigma N$ be a warped product manifold, with $\sigma:\rr_{\geq 0}\to \rr_{>0}$ a positive smooth function and $N$ a closed manifold. Observe that, up to double $M$, we can equivalently assume $I=\rr$ (and thus that the manifold is complete). In what follows, we consider $\Omega$ an unbounded domain whose closure is contained in the strip $(0,+\infty) \times \Lambda$, where $\Lambda \subset N$ is a non-empty, connected open subset of $N$ (with smooth boundary $\partial \Lambda$) such that $\overline{\Lambda}\neq N$.

\begin{center}
\begin{tikzpicture}[scale=0.8]
\draw (0,0) arc(0:360:0.5 and 2);
\draw[domain=-0.5:10, smooth, variable=\x] plot (\x, {cos(20*\x)/2 +1.5});
\draw[domain=-0.5:10, smooth, variable=\x] plot (\x, {-cos(20*\x)/2 -1.5});
\draw[red] (2,{-cos(40)/2 - 1.5}) arc(-90:90:{sin(60)/2} and {cos(40)/2 + 1.5});
\draw[dashed,red] (2,{cos(40)/2 + 1.5}) arc(90:270:{sin(60)/2} and {cos(40)/2 + 1.5});
\draw[white, line width=1] ({2+sin(60)/2},0) arc(0:22.5:{sin(60)/2} and {cos(40)/2 + 1.5});
\draw[white, line width=1] ({2+sin(60)/2},0) arc(0:-22.5:{sin(60)/2} and {cos(40)/2 + 1.5});
\draw[line width=1] (0,0) arc(0:26:{sin(60)/2} and {cos(60)/2 + 1.5});
\draw[line width=1] (0,0) arc(0:-26:{sin(60)/2} and {cos(60)/2 + 1.5});
\fill[red] (1.6,1.3) node[anchor=east]{$\Lambda$};
\draw[domain=-0.05:10, smooth, variable=\x] plot (\x, {sin(20)*(cos(20*\x)/2 + cos(60)/2 + 1.5)});
\draw[domain=-0.05:10, smooth, variable=\x] plot (\x, {-sin(20)*(cos(20*\x)/2 + cos(60)/2 + 1.5)});
\path ({0},0) arc(0:22:0.5 and {cos(60)/2 + 1.5}) coordinate (a);
\path ({0},0) arc(0:-22:0.5 and {cos(60)/2 + 1.5}) coordinate (b);
\fill[pattern=north west lines,pattern color=black, opacity=0.5] (a) -- plot [domain=0:10] (\x, {sin(20)*(cos(20*\x)/2 + cos(60)/2 + 1.5)}) -- (10,0) -- (0,0);
\fill[pattern=north west lines,pattern color=black, opacity=0.5] (b) -- plot [domain=0:10] (\x, {-sin(20)*(cos(20*\x)/2 + cos(60)/2 + 1.5)}) -- (10,0) -- (0,0);
\path[name path=patha] plot [smooth] coordinates {(3.5, {cos(70)/2 +1.5}) (3.6, 1) (5,1) (6,0.8) (7,1) (8,0.9) (9,0.7) (10,0.6)};
\path[name path=pathb] plot [smooth] coordinates {(3.5, {-cos(70)/2 -1.5}) (3.6,-1) (5,-0.7) (6,-0.8) (7,-1) (8,-0.8) (9,-0.7) (10,-0.9)};
\path[draw, name path=pathc, dashed] plot [smooth] coordinates {(3.5, {cos(70)/2 +1.5}) (4,0) (3.5, {-cos(70)/2 -1.5})};
\fill[red, opacity=0.5] plot [domain=3.5:10] (\x, {-cos(20*\x)/2 -1.5}) --  plot [smooth] coordinates {(10,-0.9) (9,-0.7) (8,-0.8) (7,-1) (6,-0.8) (5,-0.7) (3.6,-1) (3.5, {-cos(70)/2 -1.5})};
\fill[red, opacity=0.5] plot [domain=3.5:10] (\x, {cos(20*\x)/2 +1.5}) --  plot [smooth] coordinates { (10,0.6) (9,0.7) (8,0.9) (7,1) (6,0.8) (5,1) (3.6, 1) (3.5, {cos(70)/2 +1.5})};
\path [name intersections={of = patha and pathc}];
\coordinate (A) at (intersection-2);
\path [name intersections={of = pathb and pathc}];
\coordinate (B) at (intersection-2);
\path[draw] plot [smooth] coordinates {(3.5, {cos(70)/2 +1.5}) (3.6, 1) (5,1) (6,0.8) (7,1) (8,0.9) (9,0.7) (10,0.6)};
\path[draw] plot [smooth] coordinates {(3.5, {cos(70)/2 +1.5}) (3.6, 1) (5,1) (6,0.8) (7,1) (8,0.9) (9,0.7) (10,0.6)};
\path[draw] plot [smooth] coordinates {(3.5, {-cos(70)/2 -1.5}) (3.6,-1) (5,-0.7) (6,-0.8) (7,-1) (8,-0.8) (9,-0.7) (10,-0.9)};
\fill[red, opacity=0.2] plot [smooth] coordinates {(A) (4,0) (B)} -- plot [smooth] coordinates {(B) (5,-0.7) (6,-0.8) (7,-1) (8,-0.8) (9,-0.7) (10,-0.9)} -- plot [smooth] coordinates { (10,0.6) (9,0.7) (8,0.9) (7,1) (6,0.8) (5,1) (A)};
\path[red] (6,1.5) node[anchor=south]{$\Omega$};
\end{tikzpicture}
\end{center}

While at the beginning of this section we explained how to prove $\dpar$ity using Lemma \ref{Lem:BCNLemma}, for more general warped product manifolds we will apply the following Dirichlet-Khas'minskii test (see \cite[Lemma 14]{pessoa2017dirichlet}) to subdomains of the ambient manifold.

\begin{lemma}[$\mathcal{D}$-Khas'minskii test]
Given a Riemannian manifold $(M,g)$ with boundary $\partial M\neq \emptyset$, if there exists a compact set $K\subset M$ and a function $0\leq \phi\in C^0(M\setminus \inte K)\cap W^{1,2}\loc (\inte M\ \setminus K)$ such that $\phi(x)\to \infty$ as $d^M(x,x_0)\to \infty$ for some (any) $x_0\in M$, and
\begin{align*}
-\int_{\inte M\ \setminus K} g(\nabla\phi, \nabla \rho) & \leq 0 \\ &\forall 0\leq \rho \in C^0(M\setminus \inte K) \cap W^{1,2}\loc (\inte M\ \setminus K),
\end{align*}
then $M$ is $\dpar$.
\end{lemma}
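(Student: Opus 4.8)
The plan is to establish this Dirichlet--Khas'minskii test by a direct barrier / \emph{Phragm\'en--Lindel\"of} argument, in the same spirit as the proof of Lemma \ref{Lem:BCNLemma} (with $c\equiv 0$ and $\phi$ in the role of the barrier), the genuinely new features being the presence of the boundary $\pM$, the fact that $\phi$ is only defined outside the compact set $K$, and its reduced regularity. Let $u\in C^0(\overline M)\cap C^\infty(\inte M)$ be a bounded solution of $-\lap u=0$ in $\inte M$ with $u=0$ on $\pM$; since this class is stable under $u\mapsto -u$ and the competitor $\phi$ is fixed once and for all, it is enough to prove $\sup_M u\le 0$. We may also assume $M$ connected (otherwise we argue on each connected component), so that $\inte M$ is connected and the strong maximum principle is available there.

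The comparison function is $w_\e:=u-\e\phi$, $\e>0$, on $M\setminus\inte K$, and three facts about it are used. \emph{(i)} On $\inte M\setminus K$ the function $w_\e$ is a continuous $W^{1,2}\loc$ weak subsolution of the Laplacian: $u$ is harmonic there, while the hypothesis $-\int g(\nabla\phi,\nabla\rho)\le 0$ says exactly that $\phi$ is a weak supersolution, so $\int g(\nabla w_\e,\nabla\rho)=\int g(\nabla u,\nabla\rho)-\e\int g(\nabla\phi,\nabla\rho)\le 0$ for every nonnegative compactly supported test function $\rho$. \emph{(ii)} On $\pM\setminus\inte K$ one has $w_\e=-\e\phi\le 0$, because $\phi\ge 0$. \emph{(iii)} $w_\e\to-\infty$ at infinity, since $u$ is bounded and $\phi\to+\infty$ at infinity.

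Next I would fix an exhaustion $\{D_j\}$ of $M$ by relatively compact open sets with $K\ssubset D_1$ and apply the weak maximum principle to $w_\e$ on $D_j\setminus K$, using (i) in the interior and (ii) to handle the part of $\pM$ met by $D_j$: on $\partial K$, using $\phi\ge 0$, $w_\e\le\sup_{\partial K}u=:A$; on $\pM\cap(D_j\setminus K)$, $w_\e\le 0$; on $\partial D_j\cap\inte M$, $w_\e\le\sup_M u-\e\inf_{\partial D_j}\phi\to-\infty$ by (iii). Hence $w_\e\le\max\{A,0\}$ on $D_j\setminus K$ for all large $j$, so $u\le\max\{A,0\}+\e\phi$ on $M\setminus K$; evaluating at an arbitrary point of $M\setminus K$ and letting $\e\to 0$ gives $u\le\max\{A,0\}$ on $M\setminus K$, and the ordinary interior maximum principle on $K$ (where $u$ is harmonic and $u=0$ on $\pM\cap K$) gives the same bound on $K$. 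Thus $\sup_M u\le\max\{\sup_{\partial K}u,\,0\}$. If $\sup_M u>0$, this forces $\sup_M u=\sup_{\partial K}u>0$, which, since $u$ vanishes on $\pM$, is attained at some $q\in\partial K\cap\inte M$; there $u$ is harmonic and equals its global supremum, so the strong maximum principle on the connected manifold $\inte M$ gives $u\equiv\sup_M u>0$, contradicting $u=0$ on $\pM\neq\emptyset$. Hence $\sup_M u\le 0$; applying this to $-u$ yields $u\equiv 0$, so $M$ is $\dpar$.

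The main obstacle I anticipate is bookkeeping at $\pM$ rather than any substantial analysis: the admissible test functions in the hypothesis are supported up to $\pM$, so $\phi$ is a weak supersolution carrying a natural (Neumann-type) condition there, whereas $u$ carries a Dirichlet condition, so $w_\e$ satisfies no equation up to $\pM$ --- it is only the pointwise sign $w_\e\le 0$ on $\pM$ coming from $\phi\ge 0$ that enters, which is why one applies the maximum principle on the region \emph{excluding} $\pM$ and reads off the boundary values on $\pM$ separately. The second point requiring care is the weak maximum principle for $w_\e$ when $\phi$ is merely $W^{1,2}\loc$; this is classical (local boundedness of subsolutions together with continuity of $w_\e$) but should be invoked explicitly.
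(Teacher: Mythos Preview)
Your argument is correct. The paper itself does not prove this lemma --- it is quoted from \cite{pessoa2017dirichlet} --- but it does reproduce a version of the argument inside the proof of Theorem~\ref{Thm:DparabolicSigma}, and that version differs from yours in one idea worth noting. There the barrier $\phi$ is defined on the whole domain $U$ (no compact $K$ to excise), and instead of exhausting by $D_j$ and tracking the three boundary pieces $\partial K$, $\pM$, $\partial D_j$, the paper sets $v:=u-u_0-\e\phi$ with $u_0:=u(x_0)$ a \emph{strictly positive} value of $u$ chosen below the putative positive supremum, and works directly on the superlevel set $W:=\{v>0\}$. Subtracting the constant $u_0>0$ forces $v<0$ wherever $u\le 0$ (in particular on the Dirichlet boundary), so $W$ stays in the interior automatically, and $\phi\to+\infty$ makes $W$ bounded; a single application of the maximum principle on the precompact set $W$ then yields the contradiction. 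This positivity-set trick bypasses the exhaustion, the separate handling of $\pM$, and your final strong-maximum-principle endgame at $\partial K$. Your route is longer but has the compensating virtue of dealing with the compact set $K$ explicitly and making the role of each boundary component transparent, which the paper's embedded argument does not need to do in its simpler setting.
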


Before stating the main theorem of this section we briefly recall that the radial Ricci curvature $\ric_{rr}$ at a point $p=(r,\xi)$ of a warped product manifold $M=I\times_\sigma N$ is given by
\begin{align*}
\ric_{rr}(p)=\ric \left( \frac{\partial}{\partial r},\frac{\partial}{\partial r} \right)(p)=-\frac{\sigma''(r)}{\sigma(r)}.
\end{align*}
In particular, on noting that $\sigma(r)>0$ for every $r\in I$, we get
\begin{align*}
\ric_{rr}(p) \geq 0\ \ (\textnormal{resp.}\ \leq 0) \spc \Leftrightarrow \spc \sigma''(r) \leq 0\ \ (\textnormal{resp.}\ \geq 0).
\end{align*}

\begin{theorem}\label{Thm:DparabolicSigma}
Let $M=\rr_{\geq 0}\times_\sigma N$ be a warped product manifold of dimension $\textnormal{dim}(M)\geq 2$, where $\sigma:\rr_{\geq 0}\to \rr_{>0}$ is a positive smooth function and $N$ is a closed manifold. Consider $\Omega\subset M$ an unbounded domain whose closure is contained in the strip $[0,+\infty)\times \Lambda$, where $\Lambda \subset N$ is a non-empty, smooth and connected open subset of $N$ such that $\overline{\Lambda}\neq N$. Assume that either one of the following conditions is satisfied
\begin{enumerate}
\item $\ric_{rr}\leq 0$ eventually and $\exists \lim_{r\to \infty} \sigma(r)=c\in [0,+\infty)$;
\item $\ric_{rr}\geq 0$ eventually and $\exists \lim_{r\to \infty} \sigma(r)=c\in (0,+\infty]$;
\item $\sigma\in O(r^\beta)$ for $0<\beta<\frac{1}{2}$ as $r\to +\infty$ and $\frac{\sigma'}{\sigma}\in L^\infty$ eventually.
\end{enumerate}
Then $\overline{\Omega}$ is $\dpar$.
\end{theorem}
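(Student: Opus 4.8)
\emph{Proof plan.} The plan is to apply the $\mathcal{D}$-Khas'minskii test to the manifold with boundary $\overline{\Omega}$, exhibiting a barrier of separated form $\phi(r,\xi)=h(r)\,\psi(\xi)$, where $\psi$ is a positive first Dirichlet eigenfunction on a subdomain of $N$ slightly larger than $\Lambda$, and $h$ is a positive, divergent solution of a one-dimensional differential inequality. Once this reduction is in place, the three assumptions (1)--(3) will enter \emph{only} through the construction of $h$, and that is the real content of the proof.

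We may assume $N$ connected (otherwise replace it by the component containing the connected set $\Lambda$, which affects neither $\overline{\Omega}$ nor its parabolicity); put $n=\dim M\geq 2$, so $\dim N=n-1\geq 1$. Since $\overline{\Lambda}\neq N$, fix a small geodesic ball $B\subset N\setminus\overline{\Lambda}$ with smooth boundary and set $\Lambda':=N\setminus\overline{B}$, a nonempty, smooth, connected, proper open subset of $N$ with $\overline{\Lambda}\subset\Lambda'$. Let $\psi>0$ be a first Dirichlet eigenfunction of $-\Delta^{N}$ on $\Lambda'$, with eigenvalue $\lambda_1=\lambda_1(\Lambda')$; since $\Lambda'$ is relatively compact with nonempty boundary in the closed manifold $N$ we have $\lambda_1>0$, and $\varepsilon_0:=\min_{\overline{\Lambda}}\psi>0$. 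By the usual form of the Laplace--Beltrami operator on a warped product, $\Delta_M\bigl(h(r)\psi(\xi)\bigr)=\psi(\xi)\bigl[h''+(n-1)\tfrac{\sigma'}{\sigma}h'-\tfrac{\lambda_1}{\sigma^{2}}h\bigr]$, so that for every $h\in C^{2}((r_0,+\infty))$ solving
\begin{align}\label{Eq:radialODEplan}
h''+(n-1)\tfrac{\sigma'}{\sigma}h'-\tfrac{\lambda_1}{\sigma^{2}}h\leq 0\qquad\text{on }(r_0,+\infty)
\end{align}
(that is, \eqref{Eq:RadialLaplacianApp} with $c\equiv 0$) the function $\Phi(r,\xi):=h(r)\psi(\xi)$ satisfies $\Delta_M\Phi\leq 0$ on the cylinder $(r_0,+\infty)\times\Lambda'$. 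If in addition $h>0$ and $h(r)\to+\infty$, then, because $N$ is compact, along $\overline{\Omega}\subset[0,+\infty)\times\Lambda$ one has $d^{M}(p,p_0)\to+\infty$ if and only if $r(p)\to+\infty$; hence, writing $K:=\overline{\Omega}\cap([0,r_0]\times N)$ (a compact subset of $\overline{\Omega}$), the restriction $\phi:=\Phi|_{\overline{\Omega}\setminus K}$ is nonnegative, satisfies $\phi\geq\varepsilon_0\,h(r)\to+\infty$, and is the restriction of a positive classical supersolution of the Laplace equation defined on an open neighbourhood of $\overline{\Omega}\setminus K$ in $M$. Thus $\phi$ meets the requirements of the $\mathcal{D}$-Khas'minskii test and $\overline{\Omega}$ is $\mathcal{D}$-parabolic. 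Everything is therefore reduced to producing, for $r_0$ large, a positive $h$ with $h(r)\to+\infty$ solving \eqref{Eq:radialODEplan}.

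We build $h$ case by case, choosing $r_0$ large enough that the relevant eventual hypotheses hold on $[r_0,+\infty)$. In case (1), a convex function bounded above on a half-line is non-increasing, so $\sigma''\geq 0$ eventually together with $\sigma(r)\to c\in[0,+\infty)$ forces $\sigma'\leq 0$ eventually; taking $h(r)=e^{\beta r}$, \eqref{Eq:radialODEplan} becomes $\beta^{2}+(n-1)\tfrac{\sigma'}{\sigma}\beta\leq\tfrac{\lambda_1}{\sigma^{2}}$, whose left side is $\leq\beta^{2}$, so the inequality holds eventually for every $\beta>0$ if $c=0$ (then $\lambda_1/\sigma^{2}\to+\infty$) and for $\beta^{2}=\lambda_1/(2c^{2})$ if $c>0$ (then $\lambda_1/\sigma^{2}\to\lambda_1/c^{2}$). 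In case (2), a concave function on a half-line with $\sigma(r)\to c\in(0,+\infty]$ has $\sigma'\geq 0$ non-increasing eventually, hence $\sigma'$ bounded; if $c<+\infty$ then $\sigma'\to 0$, $\sigma'/\sigma\to 0$, and $h(r)=e^{\beta r}$ with $\beta^{2}=\lambda_1/(2c^{2})$ works (dividing \eqref{Eq:radialODEplan} by $h$, the left side tends to $\beta^{2}-\lambda_1/c^{2}<0$); if $c=+\infty$ the favourable term $\lambda_1/\sigma^{2}$ decays, so no exponential barrier can be superharmonic and we take $h=\sigma^{\gamma}$ with $\gamma>0$ small, for which a direct computation gives
\begin{align*}
h''+(n-1)\tfrac{\sigma'}{\sigma}h'-\tfrac{\lambda_1}{\sigma^{2}}h
&=\sigma^{\gamma-2}\bigl(\gamma(\gamma+n-2)(\sigma')^{2}+\gamma\,\sigma\sigma''-\lambda_1\bigr)\\
&\leq\sigma^{\gamma-2}\bigl(\gamma(\gamma+n-2)b^{2}-\lambda_1\bigr),
\end{align*}
where $b:=\sigma'(r_0)$ and we used $\sigma\sigma''\leq 0$ and $0\leq\sigma'\leq b$; this is negative as soon as $\gamma(\gamma+n-2)b^{2}<\lambda_1$, while $h=\sigma^{\gamma}\to+\infty$. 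Finally, in case (3), $\sigma(r)\leq Cr^{\beta}$ with $0<\beta<\tfrac12$ and $|\sigma'/\sigma|\leq A$ eventually, so $h(r)=r$ turns the left side of \eqref{Eq:radialODEplan} into $(n-1)\tfrac{\sigma'}{\sigma}-\tfrac{\lambda_1 r}{\sigma^{2}}\leq (n-1)A-\tfrac{\lambda_1}{C^{2}}r^{\,1-2\beta}\to-\infty$, so \eqref{Eq:radialODEplan} holds for $r$ large. In every case $h>0$, $h(r)\to+\infty$ and \eqref{Eq:radialODEplan} holds, which completes the argument.

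The crux is thus the construction of the radial factor $h$: there is no uniform choice, and the role of hypotheses (1)--(3) is precisely that each of them turns one explicit ansatz ($e^{\beta r}$, $\sigma^{\gamma}$, or $r$) into an admissible radial supersolution. I expect case (2) with $c=+\infty$ to be the most delicate, since there the zeroth-order coefficient $\lambda_1/\sigma^{2}$ vanishes at infinity, ruling out exponentially growing barriers and forcing the slowly divergent power $\sigma^{\gamma}$, whose first-order term must then be absorbed via concavity (boundedness of $\sigma'$ and $\sigma\sigma''\leq 0$). A secondary, purely technical point is that a separated barrier degenerates wherever $\overline{\Omega}$ approaches the lateral boundary $\partial\Lambda$ of the strip at infinity; replacing $\Lambda$ by $\Lambda'$ with $\overline{\Lambda}\subset\Lambda'$ keeps $\psi$ uniformly positive on $\overline{\Omega}$ and cures this, at the harmless cost of lowering $\lambda_1$ while keeping it positive.
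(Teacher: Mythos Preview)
Your proof is correct and follows essentially the same route as the paper: apply the $\mathcal{D}$--Khas'minskii test with a separated barrier $\phi(r,\xi)=h(r)\psi(\xi)$, reduce to the radial inequality \eqref{Eq:RadialLaplacianApp}, and construct $h$ case by case. The differences are minor. In cases (1) and (2) with finite limit $c$, the paper takes $h(r)=r$ and $h(r)=r^{\beta}$ respectively, whereas you use exponentials $h(r)=e^{\beta r}$; both work, and your uniform exponential ansatz is arguably cleaner since it dispenses with splitting case (1) according to whether $c=0$ or $c>0$. In case (2) with $c=+\infty$ and in case (3) your choices ($h=\sigma^{\gamma}$ and $h=r$) coincide with the paper's. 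Your enlargement of $\Lambda$ to $\Lambda'\supset\overline{\Lambda}$, to keep $\psi$ uniformly positive on $\overline{\Omega}$ and hence guarantee $\phi\to+\infty$ along every escaping sequence in $\overline{\Omega}$, is a careful technical point that the paper's proof glosses over; it costs nothing (you only lower $\lambda_1$ while keeping it positive) and makes the application of the Khas'minskii test airtight.
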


\begin{proof}
We recall that $\Omega$ is $\dpar$ if every $u\in C^\infty(\Omega)\cap C^0(\overline{\Omega})\cap L^\infty(\Omega)$ satisfying the Dirichlet problem
\begin{align}\label{Eq:DirichletProblem}
\system{ll}{-\Delta u=0 & \inn \Omega \\ u=0 & \onn \partial \Omega}
\end{align}
vanishes everywhere. By the invariance of $\dpar$ity by removing compact domains, it is enough to prove that there exists an appropriate compact subset $K\subset \Omega$ such that the resulting subdomain $U:=\Omega\setminus K$ is $\dpar$. To this end, in turn, following the philosophy of Khas'minskii test, we only have to find a nonnegative function $\phi \in C^0(\overline{U})\cap W^{1,2}_{\loc}(U)$ satisfying the conditions
\begin{align*}
\system{l}{-\Delta \phi \geq 0 \\ \underset{\underset{x\in \Omega}{d^M(p_0,x)\to \infty}}{\lim} \phi(x) = +\infty}
\end{align*}
for any fixed $p_0 \in M$. Indeed, in this case given any solution $u\in C^\infty (U)\cap C^0(\overline{U})\cap L^\infty(U)$ of \eqref{Eq:DirichletProblem}, suppose by contradiction that $\sup_U u >0$. Then there exists $x_0, x_1 \in U$ such that $\sup_U u \geq u(x_1)> u(x_0)=:u_0>0$. Define $v:=u-u_0-\e \phi$, for $\e$ small enough so that  $v(x_1)>0$, and set $W:=\bra{x\in U\ :\ v(x)>0}$. Then $x_1 \in W$ and $W$ is bounded since $\phi \to +\infty$ as $d^M(p_0,x)\to \infty$. By the fact that $\Delta v \geq 0$ weakly in $W$ and $v\leq 0$ on $\partial W$, using the strong maximum principle we get $v\leq 0$ on $W$, thus obtaining a contradiction. It follows that $u\leq 0$. By applying the same argument to the function $-u$, we conclude $u\equiv 0$, as desired.

It remains to prove the existence of the function $\phi$ and the corresponding compact set $K$. Thanks to the structure of the warped product manifold, we can assume $\phi$ to be of the form $\phi(r,\xi)=h(r)\psi(\xi)$. So, let $\psi$ be the positive first Dirichlet eigenfunction of the Laplacian on $\Lambda$
\begin{align*}
\system{ll}{
-\Delta_\Lambda \psi =\lambda_1 \psi \geq 0 & \inn \Lambda \\ \psi=0 & \onn \partial \Lambda.}
\end{align*}
With this choice the differential inequality $-\lap \phi \geq 0$ is equivalent to the second order ODE
\begin{align}\label{Thm:DparabolicSigma-Eq:hEquation}
h''+(m-1)\frac{\sigma'}{\sigma} h'-\frac{1}{\sigma^2} \lambda_1 h\leq 0.
\end{align}
Whence, we are reduced to find a solution $h$ to \eqref{Thm:DparabolicSigma-Eq:hEquation}. This is obtained via a case by case analysis:
\begin{enumerate}
\item \underline{$\sigma''\geq 0$ eventually and $\exists \lim_{r\to \infty} \sigma(r)=c\in [0,+\infty)$}: by assumption, there exists $A\geq 1$ so that
\begin{align*}
\sigma'' \geq 0 \spc \spc \textnormal{and thus} \spc \spc \sigma \geq c
\end{align*}
in $[A,+\infty)$. This implies that $\sigma' \xrightarrow[]{r\to +\infty} C\leq 0$ and $\sigma'\leq 0$ eventually, so we can assume that $\sigma'\leq 0$ for $r\geq A$. In particular, $-K\leq \sigma' \leq 0$ for a positive constant $K$.\\
Let $h(r):=r$, defined in $[A,+\infty)$: since $h'=1\geq 0$, $h''=0$ and $\sigma'\leq 0$, we get
\begin{align*}
h''+(m-1)\frac{\sigma'}{\sigma} h'-\frac{1}{\sigma^2} \lambda_1 h \leq 0.
\end{align*}
By construction, $h(r)\xrightarrow{r\to +\infty}+\infty$ and $h(r)>0$ in $[A,+\infty)$. Whence, defining $U:=\Omega \cap \left([A,+\infty)\times N \right)$ and taking $\phi(r,\xi)=h(r)\psi(\xi)$, by the previous argument we obtain that $U$ is $\dpar$.

\item[2.a.] \underline{$\sigma''\leq 0$ eventually and $\exists \lim_{r\to \infty} \sigma(r)=c\in (0,+\infty)$}: as in previous case, there exists $A\geq 1$ so that
\begin{align*}
\sigma'' \leq 0 \spc \spc \textnormal{and thus} \spc \spc \sigma \leq c
\end{align*}
in $[A,+\infty)$, implying (w.l.o.g.) $0\leq \sigma'\leq K<+\infty$ in $[A,+\infty)$. Let $\beta\in (0,1)$ and $h(r):=r^\beta$: we get
\begin{align*}
h''+(m-1)\frac{\sigma'}{\sigma} h'-\frac{1}{\sigma^2} \lambda_1 h & \leq (m-1) \frac{\sigma'}{\sigma} \beta r^{\beta-1} - \frac{1}{\sigma^2} \lambda_1 r^\beta\\
& \leq \frac{r^\beta}{\sigma} \left[(m-1) K \beta - \frac{1}{c} \lambda_1 \right]
\end{align*}
and choosing $\beta\in (0,1)$ so that $\left[(m-1) K \beta - \frac{1}{c} \lambda_1\right]\leq 0$, we obtain
\begin{align*}
h''+(m-1)\frac{\sigma'}{\sigma} h'-\frac{1}{\sigma^2} \lambda_1 h\leq 0.
\end{align*}
Since $h$ is positive and diverges as $r\to +\infty$, we can proceed exactly as in previous case, obtaining that $U:=\Omega \cap \left([A,+\infty)\times N \right)$ is $\dpar$.

\item[2.b.] \underline{$\sigma''\leq 0$ eventually and $\exists \lim_{r\to+\infty} \sigma(r)=+\infty$}: by assumption, there exists $A>1$ so that $\sigma''\leq 0 \ \inn [A,+\infty)$. Together with the fact that $\sigma\to +\infty$ as $r\to +\infty$, this implies that $\sigma'$ is decreasing and eventually positive. In particular, $\sigma'\leq K$ is bounded in $[A,+\infty)$. Choosing $h(r)=\sigma^\beta(r)$ for $\beta>0$, we get
\begin{align*}
h''+ & (m-1)\frac{\sigma'}{\sigma} h' -\frac{1}{\sigma^2} \lambda_1 h \\ &=\sigma^{\beta-2} \left[ (\sigma')^2 \beta (\beta+m-2) - \lambda_1 \right] + \underbrace{\beta \sigma^{\beta-1} \sigma''}_{\leq 0}
\end{align*}
in $[A,+\infty)$ and, thanks to the boundedness of $\sigma'$, we can take a positive $\beta$ small enough so that
\begin{align*}
(\sigma')^2 \beta (\beta+m-2) - \lambda_1 \leq 0,
\end{align*}
obtaining
\begin{align*}
h''+(m-1)\frac{\sigma'}{\sigma} h'-\frac{1}{\sigma^2} \lambda_1 h \leq 0
\end{align*}
in $[A,+\infty)$. As in first case, it follows that the subdomain $U:=\Omega \cap \left([A,+\infty)\times N \right)$ is $\dpar$.

\item[3.] \underline{$\sigma\in O(r^\beta)$ for $0<\beta<\frac{1}{2}$ as $r\to \infty$ and $\frac{\sigma'}{\sigma}\in L^\infty$ eventually}: let $K>0$ and $A_0>0$ so that $\frac{\sigma'}{\sigma}<K$ in $[A_0,+\infty)$. Then, under the current assumptions, the function $h(r):=r$ satisfies
\begin{align*}
h''+ & (m-1)\frac{\sigma'}{\sigma} h'- \frac{1}{\sigma^2} \lambda_1 h \\ &< (m-1) K - \frac{1}{\sigma^2} \lambda_1 r\xrightarrow{r\to +\infty} -\infty
\end{align*}
implying that there exists $A>A_0$ so that equation \eqref{Thm:DparabolicSigma-Eq:hEquation} is satisfied in $[A,+\infty)$. Again, it follows that the domain $U:=\Omega \cap \left([A,+\infty)\times N \right)$ is $\dpar$.
\end{enumerate}
As a consequence of the above analysis, we get a $\dpar$ subdomain of the form $U:=\Omega \cap \left([A,+\infty)\times N \right)$, for $A>0$ big enough.
\begin{center}
\begin{tikzpicture}[scale=0.8]
\draw (0,0) arc(0:360:0.5 and 2);
\draw[domain=-0.5:10, smooth, variable=\x] plot (\x, {cos(20*\x)/2 +1.5});
\draw[domain=-0.5:10, smooth, variable=\x] plot (\x, {-cos(20*\x)/2 -1.5});
\draw[red] (2,{-cos(40)/2 - 1.5}) arc(-90:90:{sin(40)/2} and {cos(40)/2 + 1.5});
\draw[dashed,red] (2,{cos(40)/2 + 1.5}) arc(90:270:{sin(40)/2} and {cos(40)/2 + 1.5});
\draw[white, line width=1] ({2+sin(40)/2},0) arc(0:22.5:{sin(40)/2} and {cos(40)/2 + 1.5});
\draw[white, line width=1] ({2+sin(40)/2},0) arc(0:-22.5:{sin(40)/2} and {cos(40)/2 + 1.5});
\draw[line width=1] (0,0) arc(0:26:{sin(60)/2} and {cos(60)/2 + 1.5});
\draw[line width=1] (0,0) arc(0:-26:{sin(60)/2} and {cos(60)/2 + 1.5});
\fill[red] (1.6,1.3) node[anchor=east]{$\Lambda$};
\draw[domain=-0.05:10, smooth, variable=\x] plot (\x, {sin(20)*(cos(20*\x)/2 + cos(60)/2 + 1.5)});
\draw[domain=-0.05:10, smooth, variable=\x] plot (\x, {-sin(20)*(cos(20*\x)/2 + cos(60)/2 + 1.5)});
\path ({0},0) arc(0:22:0.5 and {cos(60)/2 + 1.5}) coordinate (a);
\path ({0},0) arc(0:-22:0.5 and {cos(60)/2 + 1.5}) coordinate (b);
\fill[pattern=north west lines,pattern color=black, opacity=0.5] (a) -- plot [domain=0:10] (\x, {sin(20)*(cos(20*\x)/2 + cos(60)/2 + 1.5)}) -- (10,0) -- (0,0);
\fill[pattern=north west lines,pattern color=black, opacity=0.5] (b) -- plot [domain=0:10] (\x, {-sin(20)*(cos(20*\x)/2 + cos(60)/2 + 1.5)}) -- (10,0) -- (0,0);
\path[name path=patha] plot [smooth] coordinates {(3.5, {cos(70)/2 +1.5}) (3.6, 1) (5,1) (6,0.8) (7,1) (8,0.9) (9,0.7) (10,0.6)};
\path[name path=pathb] plot [smooth] coordinates {(3.5, {-cos(70)/2 -1.5}) (3.6,-1) (5,-0.7) (6,-0.8) (7,-1) (8,-0.8) (9,-0.7) (10,-0.9)};
\path[draw, name path=pathc, dashed] plot [smooth] coordinates {(3.5, {cos(70)/2 +1.5}) (4,0) (3.5, {-cos(70)/2 -1.5})};
\path [name intersections={of = patha and pathc}];
\coordinate (A) at (intersection-2);
\path [name intersections={of = pathb and pathc}];
\coordinate (B) at (intersection-2);

\path[black] (4.5,1.6) node[anchor=south]{$\Omega$};
\path[draw, name path=pathd] (6, {cos(120)/2 +1.5}) arc(90:-90:{sin(140)/2} and {cos(120)/2 + 1.5});
\path[draw,dashed, name path=pathe] (6, {cos(120)/2 +1.5}) arc(90:270:{sin(140)/2} and {cos(120)/2 + 1.5});
\path [name intersections={of = pathd and patha}];
\coordinate (C) at (intersection-1);
\path [name intersections={of = pathd and pathb}];
\coordinate (D) at (intersection-1);
\path [name intersections={of = pathe and patha}];
\coordinate (E) at (intersection-1);
\path [name intersections={of = pathe and pathb}];
\coordinate (F) at (intersection-1);

\path[draw] plot [smooth] coordinates {(3.5, {cos(70)/2 +1.5}) (3.6, 1) (5,1) (6,0.8) (C) (7,1) (8,0.9) (9,0.7) (10,0.6)};
\path[draw] plot [smooth] coordinates {(3.5, {-cos(70)/2 -1.5}) (3.6,-1) (5,-0.7) (6,-0.8) (D) (7,-1) (8,-0.8) (9,-0.7) (10,-0.9)};
\fill[black, opacity=0.3] plot [domain=3.5:6] (\x, {cos(20*\x)/2 +1.5}) -- (6, {cos(120)/2 +1.5}) arc(90:60:{sin(140)/2} and {cos(120)/2 + 1.5}) -- plot [smooth] coordinates { (C) (6,0.8) (5,1) (3.6, 1) (3.5, {cos(70)/2 +1.5})};
\fill[black, opacity=0.3] plot [domain=3.5:6] (\x, {-cos(20*\x)/2 -1.5}) -- (6, {-cos(120)/2 -1.5}) arc(-90:-60:{sin(140)/2} and {cos(120)/2 + 1.5}) -- plot [smooth] coordinates { (D) (6,-0.8) (5,-0.7) (3.6,-1) (3.5, {-cos(70)/2 -1.5})};
\path (6, {cos(120)/2 +1.5}) arc(90:60:{sin(140)/2} and {cos(120)/2 + 1.5}) coordinate (Z1);
\path (6, {-cos(120)/2 -1.5}) arc(-90:-60:{sin(140)/2} and {cos(120)/2 + 1.5}) coordinate (Z2);
\path (6, {cos(120)/2 +1.5}) arc(90:137:{sin(140)/2} and {cos(120)/2 + 1.5}) coordinate (Z3);
\path (6, {-cos(120)/2 -1.5}) arc(-90:-142:{sin(140)/2} and {cos(120)/2 + 1.5}) coordinate (Z4);

\fill[red, opacity=0.5] plot [domain=6:10] (\x, {-cos(20*\x)/2 -1.5}) --  plot [smooth] coordinates {(10,-0.9) (9,-0.7) (8,-0.8) (7,-1) (D)} -- (Z2) arc(-90:-60:{sin(140)/2} and {cos(120)/2 + 1.5});
\fill[red, opacity=0.5] plot [domain=6:10] (\x, {cos(20*\x)/2 +1.5}) --  plot [smooth] coordinates { (10,0.6) (9,0.7) (8,0.9) (7,1) (C)} -- (Z1) arc(60:90:{sin(140)/2} and {cos(120)/2 + 1.5});
\fill[black, opacity=0.1] plot [smooth] coordinates {(A) (4,0) (B)} -- plot [smooth] coordinates {(B) (5,-0.7) (Z4)} -- (Z4) arc(-142:-223:{sin(140)/2} and {cos(120)/2 + 1.5}) -- plot [smooth] coordinates {(Z3) (5,1) (A)};
\fill[red, opacity=0.2] (Z1) arc(60:-60:{sin(140)/2} and {cos(120)/2 + 1.5})  -- plot [smooth] coordinates {(D) (7,-1) (8,-0.8) (9,-0.7) (10,-0.9)} -- plot [smooth] coordinates { (10,0.6) (9,0.7) (8,0.9) (7,1) (C)};
\fill[red, opacity=0.2] plot [smooth] coordinates {(Z4) (6,-0.8) (D)} -- (Z2) arc(-60:60:{sin(140)/2} and {cos(120)/2 + 1.5}) -- plot [smooth] coordinates {(C) (6,0.8) (Z3)} -- (Z3) arc(137:218:{sin(140)/2} and {cos(120)/2 + 1.5});
\path[red] (7,1.25) node[anchor=south]{$U$};
\path (6,1.4) node[anchor=south]{$A$};
\end{tikzpicture}
\end{center}
Since $\Omega\setminus U = \left([0,A]\times N\right) \cap \Omega$ is compact in $\Omega$ and $U$ is $\dpar$, by \cite[Corollary 11]{pessoa2017dirichlet} the domain $\Omega$ is itself $\dpar$, thus completing the proof.
\end{proof}

A direct application of Theorem \ref{Thm:DparabolicSigma} gives the following maximum principle for unbounded domains. Its proof is based on a characterization of the $\dpar$ity contained in \cite[Proposition 10]{pessoa2017dirichlet}, which asserts that a Riemannian manifold $X$ with nonempty boundary $\partial X\neq \emptyset$ is $\dpar$ if and only if every subharmonic bounded function $u\in C^0(X)\cap W^{1,2}\loc(\inte X)$ satisfies $\sup_X u=\sup_{\partial X} u$.

\begin{corollary}[Unbounded maximum principle]\label{Cor:Maximum principle cone}
Let $M=\rr_{\geq 0}\times_\sigma N$ be a warped product manifold of dimension $\textnormal{dim}(M)\geq 2$, where $\sigma:\rr_{\geq 0}\to \rr_{>0}$ is a positive smooth function and $N$ a closed manifold. Consider $\Omega\subset M$ an unbounded domain whose closure is contained in the strip $[0,+\infty)\times \Lambda$, where $\Lambda \subset N$ is a non-empty, smooth and connected open subset of $N$ such that $\overline{\Lambda}\neq N$. Moreover, suppose the validity of either one of the following conditions
\begin{enumerate}
\item[a.] $\ric_{rr}\leq 0$ eventually and $\exists \lim_{r\to \infty} \sigma(r)=c\in [0,+\infty)$;
\item[b.] $\ric_{rr}\geq 0$ eventually and $\exists \lim_{r\to \infty} \sigma(r)=c\in (0,+\infty]$;
\item[c.] $\sigma\in O(r^\beta)$ for $0<\beta<\frac{1}{2}$ as $r\to \infty$ and $\frac{\sigma'}{\sigma}\in L^\infty$ eventually.
\end{enumerate}
If $u\in C^0(\overline{\Omega})\cap W^{1,2}\loc (\Omega)$ is a bounded above distributional solution of the problem
\begin{align*}
\system{rl}{-\lap u + c\ u \leq 0 & \inn \Omega \\ u\leq 0 & \onn \partial \Omega,}
\end{align*}
where $0\leq c\in C^0(\Omega)$, then
\begin{align*}
u\leq 0 \spc \inn \Omega.
\end{align*}
\end{corollary}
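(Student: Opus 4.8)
The plan is to read off the conclusion directly from the $\dpar$ity of $\overline{\Omega}$ furnished by Theorem \ref{Thm:DparabolicSigma}, combined with the characterization of parabolicity through bounded subharmonic functions recalled immediately before the statement. First one notes that hypotheses (a), (b), (c) of the corollary are exactly hypotheses (1), (2), (3) of Theorem \ref{Thm:DparabolicSigma}: indeed $\ric_{rr}\le 0$ (resp.\ $\ge 0$) eventually means $\sigma''\ge 0$ (resp.\ $\le 0$) eventually, and the growth/boundedness conditions are verbatim the same. Hence Theorem \ref{Thm:DparabolicSigma} applies and $\overline{\Omega}$ is $\dpar$.

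The core of the argument is to replace the supersolution $u$ by its positive part $v:=\max\{u,0\}$. Since $u$ is continuous, the set $\{u>0\}$ is open, and there $\lap u\ge c\,u\ge 0$ because $c\ge 0$; thus $u$, and therefore $v$ (which coincides with $u$ on that set), is subharmonic on $\{u>0\}$, while $v\equiv 0$ on the complement. A standard gluing argument — or the Kato inequality for $W^{1,2}\loc$ distributional subsolutions, which reads
\begin{align*}
\lap v\ \ge\ \chi_{\{u>0\}}\,\lap u\ \ge\ \chi_{\{u>0\}}\,c\,u\ \ge\ 0 \spc\inn\mathcal{D}'(\Omega)
\end{align*}
— shows that $v\in C^0(\overline{\Omega})\cap W^{1,2}\loc(\Omega)$ is (weakly) subharmonic on all of $\Omega$. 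Moreover $v$ is bounded, since $0\le v$ and $u$ is bounded above, and $v\equiv 0$ on $\partial\Omega$, since $u\le 0$ there.

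Finally, I would apply \cite[Proposition 10]{pessoa2017dirichlet} to the manifold with boundary $X=\overline{\Omega}$, whose boundary $\partial X=\partial\Omega$ need not be smooth (which the quoted characterization explicitly allows): as $X$ is $\dpar$ and $v$ is a bounded subharmonic function in $C^0(X)\cap W^{1,2}\loc(\inte X)$, we get $\sup_{\overline{\Omega}}v=\sup_{\partial\Omega}v$. Since $v\equiv 0$ on $\partial\Omega$, this forces $\sup_{\overline{\Omega}}v=0$, that is $v\le 0$, i.e.\ $u\le 0$ in $\Omega$, which is the claim.

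The only step that is not mere bookkeeping is the distributional subharmonicity of $u_+$: one must invoke the form of Kato's inequality (or the subharmonic gluing lemma) valid under the weak regularity $W^{1,2}\loc$ rather than $C^2$, and it is precisely the sign hypothesis $c\ge 0$ that keeps the right-hand side nonnegative. Matching the hypotheses to Theorem \ref{Thm:DparabolicSigma} and invoking the parabolicity characterization are then immediate.
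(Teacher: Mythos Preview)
Your proposal is correct and follows essentially the same route as the paper: take the positive part $u^{+}$, use Kato's inequality (the paper cites \cite[Proposition A.1]{pigola2021p}) to deduce $-\Delta u^{+}\le -c\,u^{+}\le 0$ with $u^{+}=0$ on $\partial\Omega$, and then invoke the $\dpar$ity of $\overline{\Omega}$ from Theorem \ref{Thm:DparabolicSigma} together with \cite[Proposition 10]{pessoa2017dirichlet} to conclude $u^{+}\equiv 0$. The only cosmetic difference is that the paper does not spell out the matching of hypotheses (a)--(c) with (1)--(3), which you do explicitly.
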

\begin{proof}
Consider $u\in C^0(\overline{\Omega})\cap W^{1,2}\loc (\Omega)$ a bounded above distributional solution to the problem
\begin{align*}
\system{rl}{-\lap u + c\ u \leq 0 & \inn \Omega \\ u\leq 0 & \onn \partial \Omega.}
\end{align*}
If $u^+:=\max\{u,0\}$, by Kato's inequality (see \cite[Proposition A.1]{pigola2021p}) we get
\begin{align*}
\system{ll}{-\Delta u^+ \leq -c u^+ \leq 0 & \inn \Omega \\ u^+ = 0 & \onn \partial \Omega.}
\end{align*}
Using Theorem \ref{Thm:DparabolicSigma} and \cite[Proposition 10]{pessoa2017dirichlet} it follows that $u^+=0$ in $\Omega$, implying $u\leq 0$ in $\Omega$.
\end{proof}

\section{A maximum principle for general unbounded domains in complete manifolds}\label{Sec:ABP and Maximum Principle}

In the present section we aim to prove a Maximum Principle for second order elliptic operators acting on unbounded domains of more general Riemannian manifolds. We stress that in the main theorem of this section, i.e. Theorem \ref{Thm:UnboundedMaximumPrinciple}, we only require the positivity (in the spectral sense) of the operator, with no further assumptions neither on the geometry or on the structure of the ambient manifold.

The result is obtained readapting the work made in the Euclidean case by Samuel Nordmann, \cite{nordmann2021maximum}. Most of the effort consists into recover in a Riemannian setting some classical Euclidean tools. In particular, it will be crucial the achievement of an Alexandroff-Bakelman-Pucci estimate, which will allow us to construct a (generalized) first eigenfunction in unbounded domains. The Maximum Principle will be a straightforward consequence of the existence of such eigenfunction.

\subsection{ABP inequality}

In the very interesting article \cite{cabre1997nondivergent}, Cabré proved a Riemannian version of the Alexandroff-Bakelman-Pucci estimate for elliptic operators in nondivergent form acting on manifolds with nonnegative sectional curvature. In his work, he used the assumption on the sectional curvature to ensure two fundamental tools: the (global) volume doubling property for the Riemannian measure $\dvol$ and the classical Hessian comparison principle by Rauch. In particular, since these two tools (with different curvature bounds) are available in every relatively compact domain $\Omega\subset M$ regardless of any assumption on the sectional curvature of $M$, it is reasonable to expect that we can locally recover the results by Cabré up to multiply by appropriate constants depending on $\Omega$ and on the lower bound of its sectional curvature.

Among its various applications, the ABP inequality is one of the main ingredients used by Berestycki, Nirenberg and Varadhan in \cite{MR1258192} to prove the existence of the \textit{generalized principal eigenfunction} of a second order differential operator $\operl$ on Euclidean domains, that is, a generalization of the notion of eigenfunction to operators acting on possibly nonsmooth or unbounded domains. In this paper we will see how to transplant the construction of the generalized principal eigenfunction into general bounded (and into smooth unbounded) Riemannian domains: this will allow us to prove a maximum principle for uniformly elliptic second order differential operators acting in smooth unbounded domains.

Following the proof in \cite{cabre1997nondivergent}, we get a version of the ABP inequality for uniformly elliptic operators of the form
\begin{align}\label{Eq:Operl}
\mathcal{L} u(x):=\operm u(x) + c(x) u(x),
\end{align}
with
\begin{align*}
\operm u(x) := \dive{A(x)\cdot \nabla u(x)}+g(B(x),\nabla u(x)),
\end{align*}
acting on a bounded Riemannian domain $\Omega\subset M$, where $c\in C^0(M)$ is a continuous function, $B\in C^\infty(M;TM)$ is a smooth vector field and $A\in \textnormal{End}(TM)$ is a positive definite symmetric endomorphism of the tangent bundle $TM$ so that
\begin{align*}
c_0\ g(\xi,\xi) \leq g(A(x)\cdot \xi,\xi)\leq C_0\ g(\xi,\xi) \spc \spc \forall x\in M, \forall \xi \in T_x M
\end{align*}
and
\begin{align*}
g(B(x),B(x))\leq b,\spc |c(x)|\leq b \spc \spc \forall x \in M
\end{align*}
for some positive constants $c_0,C_0$ and $b$. Moreover, we assume that the local coefficients $a_i^j$ of the endomorphism $A$ satisfy
\begin{align}\label{Cond:CoeffA}
\norm{a_i^j}{C^1}\leq a\quad \quad \forall i,j,
\end{align}
where $a\in \rr_{>0}$.
\smallskip

The strategy we adopt to achieve the ABP inequality is strongly based on the existence of a suitable atlas composed by harmonic charts. To this aim, let's start by introducing the following definition.

\begin{definition}\label{Def:Harmonic}
Given an $n$-dimensional Riemannian manifold $(M,g)$, we recall that the $C^1$-\textnormal{harmonic radius of $M$ at $x\in M$}, denoted with $r_h(x)$, is the supremum among all $R>0$ so that there exists a coordinate chart $\phi:B_R(x)\to \rr^n$ with the following properties
\begin{enumerate}[label=(\roman*)]
\item \label{Cond1} $2^{-1} g^{\rr^n}\leq g \leq 2 g^{\rr^n}$ in the local chart $(B_R(x),\phi)$;
\item \label{Cond2} $||\partial_k g_{ij}||_{C^0(B_R(x))}\leq \frac{1}{R}$ for every $k=1,...,n$;
\item $\phi$ is an harmonic map.
\end{enumerate}
\end{definition}
\noindent Defining $r_h(M):=\inf_{x\in M} r_h(x)$, if we suppose that
\begin{align}\label{AssHarm}
|\textnormal{Ric}|\leq K \quad \andd \quad \textnormal{inj}_{(M,g)}\geq i
\end{align}
for some constants $K,i\in \rr_{>0}$, by \cite[Corollary]{hebey1997harmonic} it follows that there exists a constant $r_0=r_0(n,K,i)>0$ so that
\begin{align*}
r_h(M)\geq r_0.
\end{align*}
As a consequence, under the assumptions \eqref{AssHarm} we can choose a cover of harmonic charts (with fixed positive radius) providing a uniform $C^1$-control on the metric and on its derivatives.

\begin{theorem}\label{Thm:ABP}
Let $(M,g)$ be a complete Riemannian manifold of dimension $\textnormal{dim}(M)=n$ and $\Omega\Subset M$ a bounded smooth domain. Denote $\Omega_r:=\left\{ x\in M\ :\ d(x,\Omega)<r\right\}$ for $r>0$.

Then, there exists a positive constant $C=C(n,a,b,c_0,C_0,r_h(\overline{\Omega}),|\Omega|,|\Omega_{r_h(\overline{\Omega})}|)$ such that for every $u\in C^2(\Omega)$ satisfying
\begin{align*}
\system{l}{\mathcal{M}u\geq f\ \inn \Omega \\ \limsup_{x\to \partial \Omega} u(x) \leq 0,}
\end{align*}
it holds
\begin{align}\label{Eq:ABP1}
\sup_\Omega u\leq C\ \textnormal{diam}(\Omega)\norm{f}{L^n(\Omega)}.
\end{align}
\end{theorem}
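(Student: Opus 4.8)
The plan is to reduce the Riemannian ABP estimate to the classical Euclidean one, applied chart by chart, using the harmonic atlas described above. First I would set up the geometry: since $\overline{\Omega}$ is compact, we have $r_h(\overline{\Omega})>0$, so we may cover $\overline{\Omega}$ (in fact a slightly larger set $\Omega_\rho$ with $\rho$ a fixed fraction of the harmonic radius) by finitely many harmonic charts $\phi_\alpha:B_{r_h}(x_\alpha)\to\rr^n$ in which condition \ref{Cond1} gives $\tfrac12 g^{\rr^n}\le g\le 2g^{\rr^n}$ and \ref{Cond2} controls $\|\partial_k g_{ij}\|_{C^0}$. In each such chart, writing $\mathcal Mu=\dive{A\nabla u}+g(B,\nabla u)$ in coordinates, the operator takes the non-divergence Euclidean form $\widetilde a^{ij}\partial_{ij}u+\widetilde b^i\partial_i u$, where $\widetilde a^{ij}=\sqrt{\det g}\,g^{ik}g^{jl}a_k^l$ (up to the usual $1/\sqrt{\det g}$ factor absorbed into $f$) and $\widetilde b^i$ collects the first-derivative terms coming from $\dive{\cdot}$, from the Christoffel symbols, and from $B$. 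The uniform two-sided bound on $g$, the bound \eqref{Cond:CoeffA} on $\|a_i^j\|_{C^1}$, the ellipticity constants $c_0,C_0$, and the bound $|B|,|c|\le b$ together guarantee that $\widetilde a^{ij}$ is uniformly elliptic with constants depending only on $n,c_0,C_0,a$, and that $\|\widetilde b\|_{L^\infty}$ and $\|f\|_{L^n}$ transform with controlled constants (the change of variables for $L^n$ norms of $f$ costs only a factor bounded in terms of the metric bounds, since $dV_g$ and the Lebesgue measure are comparable).

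Next I would run a standard partition-of-unity / localization argument. Let $u\in C^2(\Omega)$ with $\mathcal Mu\ge f$ and $\limsup_{x\to\partial\Omega}u\le0$; set $M_0:=\sup_\Omega u$, which we may assume positive, and consider the contact (upper-contour) set where $u$ is close to $M_0$. The classical Euclidean ABP estimate (applied in each coordinate ball to $u$, after subtracting the boundary data and using that $u\le0$ near $\partial\Omega$) bounds the oscillation of $u$ over a ball of definite size in terms of $\mathrm{diam}$ times the $L^n$ norm of the right-hand side over that ball. A chaining argument along a chain of overlapping charts joining a near-maximum point to a point near $\partial\Omega$ (where $u\le 0$)—the number of charts in the chain being bounded because $\overline\Omega$ is compact and the charts have a fixed radius—then yields $\sup_\Omega u\le C\,\mathrm{diam}(\Omega)\,\|f\|_{L^n(\Omega)}$ with $C$ depending only on the listed quantities $n,a,b,c_0,C_0,r_h(\overline\Omega),|\Omega|,|\Omega_{r_h(\overline\Omega)}|$; the volume terms enter through the number of charts needed in a cover and through the measure comparison in the change of variables. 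Alternatively, and perhaps more cleanly, I would follow Cabré's own argument: replace his global volume-doubling and Rauch Hessian-comparison inputs by their local counterparts, which hold on $\Omega_{r_h(\overline\Omega)}$ with constants depending on a lower sectional-curvature bound over that compact set (itself controlled by $|\mathrm{Ric}|$, or absorbed into the dependence on $r_h$), thereby obtaining the estimate directly in the Riemannian setting without passing through charts.

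The main obstacle is bookkeeping the dependence of the final constant: one must check that every passage (the metric distortion in \ref{Cond1}, the Christoffel and divergence terms feeding into $\widetilde b^i$, the measure comparison for $\|f\|_{L^n}$, and the length of the chaining argument) contributes a constant expressible only through $n,a,b,c_0,C_0,r_h(\overline\Omega),|\Omega|$ and $|\Omega_{r_h(\overline\Omega)}|$, and in particular that no hidden dependence on a global curvature bound of $M$ survives—only on quantities localized to $\overline\Omega$ and its fixed-size neighbourhood. A secondary technical point is handling the boundary condition $\limsup_{x\to\partial\Omega}u\le0$ correctly in the localization (so that charts meeting $\partial\Omega$ contribute nonpositive boundary contributions), which is routine given smoothness of $\partial\Omega$ but must be stated carefully. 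Once the constant is pinned down, inequality \eqref{Eq:ABP1} follows from the Euclidean ABP estimate applied locally and summed/chained.
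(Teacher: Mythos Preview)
Your proposal has a genuine gap in the core mechanism. You plan to apply the Euclidean ABP estimate in each harmonic chart and then ``chain'' along overlapping charts from a near-maximum point to the boundary. But the ABP estimate on a chart ball $B$ only yields
\[
\sup_{B} u \le \sup_{\partial B} u^{+} + C\,\mathrm{diam}(B)\,\|f\|_{L^n(B)},
\]
and when $\partial B$ lies in the interior of $\Omega$ the term $\sup_{\partial B} u^{+}$ can be arbitrarily close to $\sup_\Omega u$. So the local ABP inequality is vacuous on interior charts, and there is nothing to propagate; ABP is not an oscillation or Harnack-type bound and does not chain in the way you describe. Your alternative of rerunning Cabr\'e's normal-mapping argument with local doubling/Hessian comparison is a different proof altogether and would need to be carried out in full, not invoked.

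The paper avoids this obstruction by a substantially different route. It first solves the auxiliary Dirichlet problem $\mathcal{M}w=-(\mathcal{M}u)^{-}$, $w=0$ on $\partial\Omega$, so that $w\ge u$ by the maximum principle, and then works with $v:=S-w\ge 0$ where $S=\sup_\Omega w$. Because $v$ is a nonnegative \emph{supersolution} with $v\equiv S$ on $\partial\Omega$, it extends to the $r$-neighbourhood $\Omega_r$ (Remarks \ref{Rmk:Extension} and \ref{Rmk:Hopf}). The key analytic input is then not ABP but the \emph{weak Harnack inequality} (Theorem \ref{Thm:WeakHarnackTrudinger}) applied in each harmonic chart; Harnack estimates genuinely chain through overlapping balls, and the chain terminates at the point $z_0$ where $v=0$. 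The extension past $\partial\Omega$ is what closes the argument: since $v\equiv S$ on $\widehat\Omega\setminus\Omega$, the averaged integral $\big(\fint_{\widehat\Omega} v^p\big)^{1/p}$ is bounded below by a definite multiple of $S$, and comparing with the chained Harnack upper bound gives $S\le C\,\mathrm{diam}(\Omega)\|f\|_{L^n(\Omega)}$. The ingredients you set up correctly (harmonic atlas, coordinate expression of $\mathcal{M}$, uniform control of coefficients) are all used, but the engine is Harnack for the auxiliary supersolution, not local ABP for $u$.
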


The key result that we need to prove Theorem \ref{Thm:ABP} is the following Euclidean integral Harnack inequality, whose proof can be found in \cite[Theorem 9.22]{gilbarg1977elliptic}

\begin{theorem}\label{Thm:WeakHarnackTrudinger}
Let $\operl:= a^{ij} \partial_{i}\partial_{j}+b^i \partial_i+c$ be an uniformly elliptic differential operator acting on a bounded domain $U\subset \rr^n$ with
\begin{align*}
c_0\leq [a^{ij}]\leq C_0 \quad and \quad |b^i\partial_i|,|c|\leq b,
\end{align*}
for some positive constants $c_0,C_0$ and $b$, and let $f\in L^n(U)$. If $u\in W^{2,n}(U)$ satisfies $\mathcal{L}u\leq f$ and is nonnegative in a ball $B_{2R}(z)\subset U$, then
\begin{align*}
\left( \fint_{B_R(z)} u^p \right)^{\frac{1}{p}}\leq C_1 \left(\inf_{B_R(z)} u + R\ \norm{f}{L^n(B_{2R}(z))} \right)
\end{align*}
where $p$ and $C_1$ are positive constants depending on $n,\ bR,\ c_0$ and $C_0$.
\end{theorem}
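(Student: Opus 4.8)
The plan is to prove this by the Krylov--Safonov method, exactly along the lines of Gilbarg--Trudinger, Chapter 9. First I would reduce to a normalized situation: the dilation $x\mapsto z+R(x-z)$ reduces the claim to the case $R=1$, since the conjugated operator is uniformly elliptic with the \emph{same} ellipticity constants $c_0,C_0$, its first-order part acquires coefficient bound $Rb$ (this is the origin of the dependence of $p$ and $C_1$ on $bR$), and $\norm{f}{L^n}$ picks up a factor $R$. Then, using $u\geq 0$, I would rewrite $\mathcal{L}u\leq f$ as $a^{ij}\partial_i\partial_j u+b^i\partial_i u\leq f-cu\leq f+b\,u=:\tilde f$; the term $b\,u$ is to be reabsorbed at the end, because on the level sets that enter the estimate $u$ is itself controlled by $\inf_{B_1}u+\norm{f}{L^n}$. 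Thus it suffices to produce $p=p(n,bR,c_0,C_0)>0$ and $C_1$ with $\big(|B_1|^{-1}\int_{B_1}u^p\big)^{1/p}\leq C_1\big(\inf_{B_1}u+\norm{f}{L^n(B_2)}\big)$.

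The next step is a one-scale measure estimate obtained from the classical Euclidean Alexandrov--Bakelman--Pucci maximum principle. Applying ABP to $u$ corrected by an explicit barrier $\varphi$ (a truncated paraboloid: positive on $B_{1/2}$, vanishing near $\partial B_2$, with Hessian bounded by a dimensional constant), I would extract constants $M_0>1$ and $\mu\in(0,1)$, depending only on $n,bR,c_0,C_0$, such that whenever $\inf_{B_{1/4}}u\leq 1$ and $\norm{f}{L^n(B_2)}$ is sufficiently small, one has $\big|\{x\in B_1:\ u(x)>M_0\}\big|\leq(1-\mu)\,|B_1|$; that is, the super-level set at height $M_0$ omits a definite fraction of $B_1$.

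Then I would iterate. Rescaling the one-scale estimate to arbitrary dyadic subcubes and feeding it into the Krylov--Safonov covering lemma (the ``growing ink-spots'' lemma) propagates the measure decay across all scales and yields the power bound $\big|\{x\in B_1:\ u(x)>t\}\big|\leq C\,t^{-p}\big(\inf_{B_1}u+\norm{f}{L^n(B_2)}\big)^{p}$ for every $t>0$, with $p:=\log(1/(1-\mu))/\log M_0>0$ depending only on $n,bR,c_0,C_0$. For this value of $p$ the layer-cake identity $\int_{B_1}u^p=p\int_0^\infty t^{p-1}\,\big|\{u>t\}\cap B_1\big|\,dt$ converges and is bounded by a constant times the right-hand side; taking $p$-th roots and undoing the dilation recovers the stated inequality with $R$ and $\norm{f}{L^n(B_{2R}(z))}$, the auxiliary $b\,u$ term being absorbed into $C_1$.

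The hard part will be the iteration step: formulating the Krylov--Safonov covering lemma precisely and checking that the ABP-derived one-scale estimate verifies its hypotheses uniformly in scale and location, together with the bookkeeping that ensures the final right-hand side is exactly $\inf_{B_R}u+R\,\norm{f}{L^n(B_{2R})}$ with constants depending only on the listed parameters. Since this is a classical result, in the present paper the efficient route is simply to invoke \cite[Theorem 9.22]{gilbarg1977elliptic} directly, as done here.
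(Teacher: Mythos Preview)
Your proposal is correct and matches the paper's treatment: the paper does not prove this statement at all but simply quotes it as \cite[Theorem 9.22]{gilbarg1977elliptic}, exactly as you note in your final sentence. The sketch you give of the Krylov--Safonov argument (scaling to $R=1$, ABP plus barrier for the one-scale measure estimate, growing-ink-spots iteration, layer-cake) is the standard route to that theorem and is accurate, but it goes well beyond what the paper itself supplies.
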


\begin{remark}\label{Rmk:b=0}
If $b=0$, i.e. if $B=b^i\partial_i$ is the null vector field and $c\equiv 0$, then the constants $p$ and $C_1$ in previous theorem do not depend on the radius $R$.
\end{remark}

\begin{remark}\label{Rmk:Extension}
If $\Omega$ is a bounded smooth domain and $u\in C^2(\Omega)\cap C^1(\overline{\Omega})$ satisfies
\begin{align*}
\system{ll}{\mathcal{M}u\leq f & \inn \Omega\\ u\equiv C & \onn \partial \Omega \\ \frac{\partial u}{\partial A\cdot\nu} \leq 0 & \onn \partial \Omega,}
\end{align*}
where $\nu$ is the outward pointing unit vector field normal to $\partial \Omega$, then we can consider a larger bounded smooth domain $\Lambda\Supset \Omega$ and we can extend $u$ and $f$ to $\Lambda$ by imposing $u\equiv C$ and $f\equiv 0$ in $\Lambda \setminus \overline{\Omega}$. In this way we get a function $u\in C^0(\Lambda)\cap W^{2,n}(\Lambda)$ satisfying $\operm u\leq f$ weakly in $\Lambda$, i.e. so that
\begin{align*}
\int_{\Lambda} \left[-g(A\cdot \nabla u,\nabla \phi) + g(B, \nabla u) \phi \right] \dvol \leq \int_\Lambda f \phi \dvol \quad \quad \forall 0\leq \phi \in C^\infty_c(\Lambda).
\end{align*}
\end{remark}

\begin{remark}\label{Rmk:Hopf}
We stress that if $\Omega$ is a bounded smooth domain, $u\in C^2(\Omega)\cap C^1(\overline{\Omega})$ satisfies
\begin{align*}
\system{ll}{\mathcal{M}u\leq 0 & \inn \Omega\\ u\equiv C & \onn \partial \Omega}
\end{align*}
and $x_0\in \partial \Omega$ is a global minimum for $u$ in $\overline{\Omega}$, then
\begin{align*}
\frac{\partial u}{\partial A\cdot\nu}(x_0)\leq 0.
\end{align*}
Indeed, by decomposing $A\cdot \nu = (A\cdot \nu)^\top +(A\cdot \nu)^\bot$, where $(A\cdot \nu)^\top$ and $(A\cdot \nu)^\bot$ are tangential and normal to $\partial \Omega$ respectively, one can check that
\begin{align*}
\frac{\partial u}{\partial A\cdot\nu}(x_0)=(A(x_0)\cdot \nu(x_0))^\bot \frac{\partial u}{\partial \nu}(x_0)=\underbrace{g\Big(A(x_0)\cdot \nu(x_0), \nu(x_0)\Big)}_{>0} \frac{\partial u}{\partial \nu}(x_0)
\end{align*}
where the first equality follows from the fact that $x_0\in \partial \Omega$ is a minimum for $u|_{\partial \Omega}$, implying that the tangential component (to $\partial \Omega$) of $\nabla u$ vanishes at $x_0$. Hence $\frac{\partial u}{\partial A\cdot\nu}(x_0)$ and $\frac{\partial u}{\partial \nu}(x_0)$ have the same sign. By standard Hopf's Lemma it follows that $\frac{\partial u}{\partial A\cdot\nu}(x_0)\leq 0$.
\end{remark}

\begin{remark}
Using the local expression of the differential operator $\mathcal{M}$, we can estimate the constant of Theorem \ref{Thm:WeakHarnackTrudinger} in every local chart in terms of the coefficients $A, B$ and $c$ and of the fist order derivatives of the metric, i.e. in terms of the harmonic radius of $M$ thanks to condition \ref{Cond2}. Indeed, if $X$ is a vector field, in local coordinates
\begin{align*}
\dive{X}= \frac{\partial X^k}{\partial x^k}+ X^t \Gamma^k_{kt}
\end{align*}
obtaining
\begin{align*}
    \textnormal{div}(A\cdot\nabla u)&= \dive{a_i^j \frac{\partial}{\partial x^j}\otimes dx^i \left[g^{hk} \frac{\partial u}{\partial x^k} \frac{\partial}{\partial x^h}\right]} \\
    &=\frac{\partial}{\partial x^j} \left(a_i^j g^{hi} \frac{\partial u}{\partial x^h} \right)+a_i^t g^{hi} \frac{\partial u}{\partial x^h} \Gamma^k_{kt}.
\end{align*}
Hence the differential operator $\mathcal{M}$ writes as
\begin{align*}
\mathcal{M}u&=\dive{A\cdot \nabla u}+ g(B,\nabla u)\\
&=\dive{a_i^j \frac{\partial}{\partial x^j}\otimes dx^i \left[g^{hk} \frac{\partial u}{\partial x^k} \frac{\partial}{\partial x^h}\right]}+g\left(B^j \frac{\partial}{\partial x^j},g^{hk} \frac{\partial u}{\partial x^k} \frac{\partial}{\partial x^h}\right)\\
&= \frac{\partial}{\partial x^j} \left(a_i^j g^{hi} \frac{\partial u}{\partial x^h} \right)+a_i^t g^{hi} \frac{\partial u}{\partial x^h} \Gamma^k_{kt} +B^k \frac{\partial u}{\partial x^k}\\
&=a_i^j g^{hi} \frac{\partial^2 u}{\partial x^j\partial x^h}+ \left(\frac{\partial}{\partial x^j}\left(a_i^j g^{ki} \right)+a_i^t g^{ki} \Gamma^h_{ht}+B^k \right)\frac{\partial u}{\partial x^k}.
\end{align*}
As a consequence, under the assumptions \eqref{AssHarm} the coefficients of $\mathcal{M}$ have the same bounds in every harmonic chart of the manifold $M$. In particular, in Theorem \ref{Thm:WeakHarnackTrudinger} we can chose the same constants $p=p(n,r_h(M),a,b,c_0,C_0)$ and $C=C(n,r_h(M),a,b,c_0,C_0)$ for every harmonic chart, avoiding any dependence on the local chart.

Lastly, we stress that if we consider an operator of the form
\begin{align*}
\mathcal{M}(u)= \textnormal{tr}\left(A\cdot \textnormal{Hess}(u) \right)+g(B,\nabla u),
\end{align*}
then the same conclusion holds true without requiring the condition \eqref{Cond:CoeffA}.
\end{remark}

\begin{proof}[Proof of Theorem \ref{Thm:ABP}]
We start by supposing that $u$ and the coefficients of $\mathcal{M}$ are smooth up to the boundary of $\Omega$. Consider the solution $w$ of the problem
\begin{align*}
\system{ll}{\operm w = -F := - (\operm u)^- \leq 0 & \inn \Omega \\ w=0 & \onn \partial \Omega.}
\end{align*}
By assumption, $u\in C^\infty(\overline{\Omega})$ and so $F=(\operm u)^-$ is Lipschitz in $\overline{\Omega}$, implying that $w\in C^{2,\alpha}(\overline{\Omega})$ for  any $\alpha \in (0,1)$. Moreover, by the standard maximum principle, we have $w\geq 0$. Now consider the function $w-u$: by definition
\begin{align*}
\system{ll}{\operm (w-u)\leq 0 & \inn \Omega \\ w-u\geq 0 & \onn \partial \Omega}
\end{align*}
and, again by standard maximum principle,
\begin{align*}
w\geq u \spc \inn \Omega.
\end{align*}
Take $z_0\in \Omega$ so that $S=w(z_0)=\sup_{\Omega} w>0$ and consider the function $v:=S-w\geq 0$. Let $r:=r_h(\overline{\Omega})$ and consider the $r$-neighbourhood $\Omega_{r}$ of $\Omega$
\begin{align*}
\Omega_{r}:=\{x\in M\ :\ d(x,\Omega)<r\}.
\end{align*}
Since $v|_{\partial \Omega}\equiv S$,
 by Remark \ref{Rmk:Hopf}, we can extend $v$ and $F$ to $\Omega_r$ as done in Remark \ref{Rmk:Extension}.

Observe that, without loss of generality, we can suppose $\textnormal{diam}(\Omega)\geq r$. Otherwise, $\Omega$ is contained in an harmonic local chart and the theorem follows by the standard Euclidean ABP inequality.

Consider an open cover $\mathcal{W}$ of $\overline{\Omega}$ given by
\begin{align*}
\mathcal{W}:=\{(W_1:=B_{r/4}(x_1),\phi_1),...,(W_t:=B_{r/4}(x_t),\phi_t)\}
\end{align*}
satisfying the following assumptions
\begin{itemize}
\item $x_i\in \overline{\Omega}$ for every $i=1,...,t$;
\item $d(x_i,x_j)\geq \frac{r}{8}$ for every $i\neq j$;
\item $\mathcal{W}$ is maximal (by inclusion).
\end{itemize}
For a reference see \cite[Lemma 1.1]{He}. Moreover, observe that by construction
\begin{align*}
\bigcup_{i\leq t} W_i\subset \Omega_r.
\end{align*}
Since every chart of $\mathcal{W}$ is an harmonic chart, then
\begin{align*}
|\Omega_r|\geq \left|\cup_{1\leq i \leq t} B_{r/8}(x_i) \right| = \sum_{i \leq t} |B_{r/8}(x_i)| \geq t 2^{-n/2} |\mathbb{B}_{r/8}| 
\end{align*}
implying that
\begin{align}\label{Eq:AppABPI}
t\leq \frac{|\Omega_r|2^{n/2}}{|\mathbb{B}_{r/8}|}
\end{align}
where $\mathbb{B}_s$ denotes the Euclidean ball of radius $s$. Now let $\mathcal{U}$ and $\mathcal{V}$ the dilated covers obtained from $\mathcal{W}$
\begin{align*}
&\mathcal{U}:=\{(U_1:=B_{r}(x_1),\phi_1),...,(U_t:=B_{r}(x_t),\phi_t)\}\\
&\mathcal{V}:=\{(V_1:=B_{r/2}(x_1),\phi_1),...,(V_t:=B_{r/2}(x_t),\phi_t)\}.
\end{align*}
Observe that
\begin{align*}
W_i\cap W_j \neq \emptyset \quad \Rightarrow \quad \exists B_{r/4}(x_{ij})\subseteq V_i \cap V_j
\end{align*}
which implies, by \ref{Cond1} in Definition \ref{Def:Harmonic},
\begin{equation}\label{Eq:DoublingHarm}
\begin{split}
\frac{|V_j|}{|V_i\cap V_j|} &= \frac{|B_{r/2}(x_j)|}{|V_i\cap V_j|} \leq \frac{|B_{r/2}(x_j)|}{|B_{r/4}(x_{ij})|} \\
& \overset{\ref{Cond1}}{\leq} \frac{2^{n/2}|\mathbb{B}_{r/2}|}{2^{-n/2}|\mathbb{B}_{r/4}|} =  \frac{2^n|\mathbb{B}_{r/2}|}{|\mathbb{B}_{r/4}|} \leq 2^n C_{\rr^n}
\end{split}
\end{equation}
whenever $W_i\cap W_j \neq \emptyset$, where $C_{\rr^n}=2^n$ is the Euclidean doubling constant. It follows that if $W_i\cap W_j \neq \emptyset$
\begin{align}\label{Eq:AppABPII}
\fint_{V_i\cap V_j} v^p \leq C_D \fint_{V_j} v^p
\end{align}
where $C_D:=4^{n}$.

In any local chart $U_i$ we can apply Theorem \ref{Thm:WeakHarnackTrudinger}, obtaining
\begin{equation}\label{Eq:AppABPIII}
\begin{split}
\fint_{V_i} v^p \dint{\textnormal{v}} & \leq 2^n \fint_{\mathbb{B}_{r/2}} (v\circ \phi_i)^p \dint{x} \\
&\leq 2^n C_1^p \left[\inf_{\mathbb{B}_{r/2}} v\circ \phi_i^{-1} + \frac{r}{2} \norm{F\circ \phi_i^{-1}}{L^n(\mathbb{B}_{r})} \right]^p\\
& \leq 2^n C_1^p \left[\inf_{V_i} v + \frac{r}{2} \sqrt{2}\norm{F}{L^n(U_i)} \right]^p
\end{split}
\end{equation}
that implies
\begin{equation}\label{Eq:AppABP0}
\begin{split}
\left(\fint_{V_i} v^p \dint{\textnormal{v}} \right)^{1/p} &\leq \underbrace{2^{n/p} C_1}_{=:\widetilde{C}_1} \left[\inf_{V_i} v + \frac{r}{\sqrt{2}}\norm{F}{L^n(U_i)} \right]\\
& \leq \widetilde{C}_1 \left[\inf_{V_i} v + r\norm{F}{L^n(U_i)} \right] \quad \quad \forall i=1,...,t.
\end{split}
\end{equation}
Summing up over $i=1,...,t$, on the left side of \eqref{Eq:AppABPIII} we have
\begin{equation}\label{Eq:AppABP1}
\begin{split}
\sum_{i\leq t} \fint_{V_i} v^p \geq \frac{1}{|\widehat{\Omega}|} \int_{\widehat{\Omega}} v^p=\fint_{\widehat{\Omega}}v^p
\end{split}
\end{equation}
where
\begin{align*}
\widehat{\Omega}:=\bigcup_{1\leq i\leq t} V_i \subseteq \Omega_r.
\end{align*}
Now let $j\in \{1,...,t\}$ be so that
\begin{align*}
\left(\inf_{V_j} v + r\norm{F}{L^n(U_j)} \right)=\max_{i\leq t} \left(\inf_{V_i} v + r\norm{F}{L^n(U_i)} \right).
\end{align*}
and let $\mathcal{S}:=\{W_{i_1},...,W_{i_m}\}\subseteq \mathcal{W}$ be a sequence of coordinate neighbourhoods joining $W_j=:W_{i_1}$ and $z_0\in W_{i_m}$ and such that
\begin{align*}
&W_{i_q}\neq W_{i_s} \spc \forall q\neq s,\\
&W_{i_q}\cap W_{i_{q+1}}\neq \emptyset \spc \forall q=1,...,m-1.
\end{align*}
We get 
\begin{align*}
\inf_{V_j} v=\inf_{V_{i_1}} v & \leq \inf_{V_{i_1}\cap V_{i_2}} v \\
&\overset{\textnormal{by}\ \eqref{Eq:AppABPII}}{\leq} \left(\fint_{V_{i_1}\cap V_{i_2}} v^p \right)^{1/p}\\
&\overset{\textnormal{by}\ \eqref{Eq:AppABP0}}{\leq} C_D \left(\fint_{V_{i_2}} v^p\right)^{1/p}\\
&\leq C_D \widetilde{C}_1 \left( \inf_{V_{i_2}} v + r\norm{F}{L^n(U_{i_2})}\right)\\
&\leq C_D \widetilde{C}_1 \left( \inf_{V_{i_2}} v + r\norm{F}{L^n(\widetilde{\Omega})}\right)
\end{align*}
where
\begin{align*}
\widetilde{\Omega}=\bigcup_{1\leq i\leq t} U_i.
\end{align*}
Iterating
\begin{align*}
\inf_{V_j} v &\leq (C_D \widetilde{C}_1)^m \left( \inf_{V_{i_m}} v + m\ r\norm{F}{L^n(\widetilde{\Omega})}\right)\\
&=(C_D \widetilde{C}_1)^m \left( m\ r\norm{F}{L^n(\widetilde{\Omega})}\right)\\
&\leq (C_D \widetilde{C}_1)^t \left( t\ \textnormal{diam}(\Omega) \norm{F}{L^n(\widetilde{\Omega})}\right)\\
&=C_2\ \textnormal{diam}(\Omega) \norm{F}{L^n(\widetilde{\Omega})}
\end{align*}
where, using \eqref{Eq:AppABPI}, $C_2:=t(C_D \widetilde{C}_1)^t$ can be bounded from above by
\begin{align*}
C_2 \leq \frac{|\Omega_r|2^{n/2}}{|\mathbb{B}_{r/8}|} (C_D \widetilde{C}_1)^{\frac{|\Omega_r|2^{n/2}}{|\mathbb{B}_{r/8}|}}.
\end{align*}
Observe that, without loss of generality, $C_D\widetilde{C}_1 \geq 1$. In this way we obtain
\begin{equation}\label{Eq:AppABP2}
\begin{split}
\sum_{i\leq t} \widetilde{C}_1^p \left( \inf_{V_i} + r \norm{F}{L^n(U_i)}\right)^p & \leq t \widetilde{C}_1^p \left(\inf_{V_j} v + \textnormal{diam}(\Omega) \norm{F}{L^n(\widetilde{\Omega})}\right)^p\\
& \leq \widetilde{C}_2^p \left(\textnormal{diam}(\Omega) \norm{F}{L^n(\widetilde{\Omega})}\right)^p
\end{split}
\end{equation}
where $\widetilde{C}_2:=t^{1/p}\widetilde{C}_1(C_2+1)$. Using \eqref{Eq:AppABP0}, \eqref{Eq:AppABP1} and \eqref{Eq:AppABP2}, it follows
\begin{align*}
\fint_{\widehat{\Omega}} v^p \leq \widetilde{C}_2^p \left(\textnormal{diam}(\Omega) \norm{F}{L^n(\widetilde{\Omega})} \right)^p
\end{align*}
i.e.
\begin{align}\label{Eq:AppABP3}
\left(\fint_{\widehat{\Omega}} v^p \right)^{1/p} \leq \widetilde{C}_2\ \textnormal{diam}(\Omega) \norm{F}{L^n(\widetilde{\Omega})}.
\end{align}
Recalling that $v\equiv S$ in $\widehat{\Omega}\setminus \Omega$, we get
\begin{align*}
\left( \fint_{\widehat{\Omega}} v^p\right)^{1/p} \geq \left(\frac{1}{|\widehat{\Omega}|} \int_{\widehat{\Omega}\setminus \Omega} v^p \right)^{1/p} \geq \left(\frac{|\widehat{\Omega}\setminus\Omega|}{|\widehat{\Omega}|} \right)^{1/p} S=:\theta^{1/p} S
\end{align*}
and, since $|F|\leq |f| \chi_{\Omega}$, by \eqref{Eq:AppABP3}
\begin{align*}
\left( \fint_{\widehat{\Omega}} v^p \right)^{1/p} \leq \widetilde{C}_2\ \textnormal{diam}(\Omega) \norm{F}{L^n(\widetilde{\Omega})}\leq \widetilde{C}_2\ \textnormal{diam}(\Omega) \norm{f}{L^n(\Omega)}.
\end{align*}
Whence
\begin{align}\label{Eq:AppABP4}
\sup_\Omega w =S\leq C\ \textnormal{diam}(\Omega) \norm{f}{L^n(\Omega)}
\end{align}
where $C=\frac{\widetilde{C}_2}{\theta^{1/p}}$. In particular, previous inequality implies
\begin{align*}
\sup_\Omega w\leq C\ \textnormal{diam}(\Omega)\ |\Omega|^{1/n} \norm{f}{L^\infty(\Omega)}.
\end{align*}

For the general case, i.e. removing the smoothness assumption on $u$ and on the coefficients of $\operm$ up to the boundary, we can proceed by an exhaustion of $\Omega$ by smooth, relatively compact subdomains, as done in \cite[Theorem 2.3]{cabre1997nondivergent}. Indeed, let $\{U_\e\}_{\e>0}$ be a family of relatively compact subdomain of $\Omega$ with smooth boundary so that $u\leq \e$ in $\Omega\setminus U_\e$ (recall that $\limsup_{x\to \partial \Omega} u(x)\leq 0$) and satisfying $\bigcup_\e U_\e =\Omega$ and define $u_\e=u-\e\in C^2(\overline{U_\e})$. If we consider the following sequences
\begin{itemize}
\item $\bra{u_{k}}_k\subset C^\infty(\overline{U_\e})$ approximating uniformly $u$ and its derivatives up to order 2;
\item $\bra{A_{k,\e}}_k\subset \textnormal{End}(TM)$ a sequence of positive definite symmetric endomorphisms of the tangent bundle $TM$ whose coefficients are smooth and converge to the ones of $A$ in $W^{1,n}(U_\e)$;
\end{itemize}
then, defining $u_{k,\e}:=u_k-\e$ and $F_{k,\e}:=\bigg(\dive{A_{k,\e}\cdot \nabla u_{k,\e}}+g(B,\nabla u_{k,\e}) \bigg)^-$, by \eqref{Eq:AppABP4} in previous step we get
\begin{align*}
\sup_{U_\e} u_{k,\e} \leq C\ \textnormal{diam}(\Omega) \norm{F_{k,\e}}{L^n(U_\e)}.
\end{align*}
Thanks to the properties of the sequences defined, we get
\begin{align*}
\sup_{U_\e} u_{k,\e} \xrightarrow[]{k} \sup_{U_\e} u_\e
\end{align*}
and
\begin{align*}
F_{k,\e}\xrightarrow[]{k} F \spc \spc \inn L^n(U_\e)
\end{align*}
that, together with previous inequality, imply
\begin{align*}
\sup_{U_\e} u_\e \leq C\ \textnormal{diam}(\Omega) \norm{F}{L^n(U_\e)},
\end{align*}
i.e.
\begin{align*}
\sup_{U_\e} u \leq C\ \textnormal{diam}(\Omega) \norm{f}{L^n(U_\e)}+\e.
\end{align*}
Letting $\e\to 0$, thanks to the fact that $\limsup_{x\to \partial \Omega} u\leq 0$ and $U_\e \to \Omega$, we finally get
\begin{align*}
\sup_\Omega u \leq C\ \textnormal{diam}(\Omega) \norm{f}{L^n(\Omega)}.
\end{align*}
\end{proof}

\begin{remark}\label{Rmk:ABPExhaustion}
Observe that the constant $C$ in previous theorem depends on $n,\ a,\ b,\ c_0,\ C_0$ and on the family of harmonic neighbourhoods $\mathcal{W}$ that $\Omega$ intersects. In particular, by construction if $\Omega$ and $\Omega'$ are covered by the same family of harmonic neighbourhoods $\mathcal{W}$, $|\Omega|>|\Omega'|$ and $C$ and $C'$ are the constants given by Theorem \ref{Thm:ABP} on $\Omega$ and $\Omega'$ respectively, then
\begin{align*}
C>C'.
\end{align*}
As a consequence, the constant $C$ is monotone (increasing) with respect to the inclusion and so we can use the same $C=C(\Omega)$ for every subdomain $\Omega'\subseteq \Omega$.
\end{remark}

\begin{remark}
The explicit expression of the constant $C$ in \eqref{Eq:ABP1} is the following
\begin{align*}
C=\frac{t^{1/p} 2^{n/p}\left[t\left(2^{n(p+1)/p}C_{\rr^n} C_1 \right)^t +1 \right]}{\theta^{1/p}}
\end{align*}
where, denoting $r:=r_h(\overline{\Omega})$,
\begin{itemize}
\item $p=p(n,r,a,b,c_0,C_0)$ and $C_1=C_1(n,r,a,b,c_0,C_0)$ are the constants given in Theorem \ref{Thm:WeakHarnackTrudinger};
\item $C_{\rr^n}$ is the Euclidean doubling constant;
\item $\theta=1-\frac{|\Omega|}{|\widehat{\Omega}|}$;
\item $t\leq \frac{|\Omega_r| 2^{n/2}}{|\mathbb{B}_{r/8}|}$.
\end{itemize}
Observe that in the Euclidean case we have $r_h=+\infty$, implying that if $\Omega\subset \rr^n$ is a fixed bounded domain, then we can choose a radius $R=(8\ \textnormal{diam}(\Omega))$ in order to get $\Omega \subset \mathbb{B}_{R/8}$. By Remark \ref{Rmk:ABPExhaustion}, we can use the ABP constant of the domain $\mathbb{B}_{R/8}$ also for the domain $\Omega$. In particular, thanks to the Euclidean (global) doubling property, the constants $t$ and $\theta$ of the domain $B_{R/8}$ do not depend neither on $\mathbb{B}_{R/8}$ nor $\Omega$, while the constants $p$ and $C_1$ depend on $n,\ R$ (and hence on $\textnormal{diam}(\Omega)$), $b,c_0$ and $C_0$. This means that in case $M=\rr^n$ the constant in Theorem \ref{Thm:ABP} depends on the domain $\Omega$ only through its diameter. Moreover, by Remark \ref{Rmk:b=0}, this last dependence on the diameter of $\Omega$ is avoided in case $b=0$ (for instance for the Euclidean Laplacian).
\end{remark}


%

\subsection{Generalized principal eigenfunction in general bounded domains}
As already claimed, the aim of this section is to prove a maximum principle for smooth unbounded domains in a general Riemannian manifolds. While in the bounded case the validity of the maximum principle is strictly related to the positivity of the first Dirichlet eigenvalue, in unbounded domains the existence of classical principal eigenelements is not even guaranteed. In this direction, following what done by Nordman in \cite{nordmann2021maximum}, we will consider a generalization of the notion of principal eigenvalue (and related eigenfunction) in order to extend this relation to unbounded smooth domains. 

\begin{definition}
The \textnormal{generalized principal Dirichlet eigenvalue} of the operator $\operl$ acting on a (possibly nonsmooth) domain $\Omega\subset M$ is defined as
\begin{align*}
\lambda_1^{-\operl}(\Omega) := \sup\bra{\lambda \in \rr \ :\ \operl+\lambda\ admits\ a\ positive\ supersolution}
\end{align*}
where $u$ is said to be a \textnormal{supersolution} for the operator $\mathcal{L}+\lambda$ if $u\in C^{2}(\overline{\Omega})$ and it satisfies
\begin{align*}
\system{rl}{(\operl+\lambda) u \leq 0 & \inn \Omega \\ u\geq 0 & \onn \partial \Omega.}
\end{align*}
\end{definition}
Clearly, the previous definition makes sense both in bounded and unbounded domains and in the former case it coincides with the classical notion of principal eigenvalue. Moreover, if $A^{-1}\cdot B=\nabla \eta$ for a smooth function $\eta$ (for instance, if $B\equiv 0)$, then $\operl$ is symmetric on $L^2(\Omega, \textnormal{dv}_\eta)$, where $\textnormal{dv}_\eta=e^\eta \dvol$, and we have a variational characterization of $\lambda_1$ through the Rayleigh quotient
\begin{align*}
\lambda_1^{-\operl}(\Omega)= \underset{\norm{\psi}{L^2(\Omega, \textnormal{dv}_\eta)}=1}{\inf_{\psi\in H^1_0(\Omega, \textnormal{dv}_\eta)}} \left( \int_\Omega g(A\cdot \nabla \psi, \nabla\psi) \dvol_\eta-\int_\Omega c \psi^2 \dvol_\eta\right).
\end{align*}

The next step consists in proving the existence of a couple of generalized eigenelements. The first result we need is a boundary Harnack inequality, obtained adapting \cite[Theorem 1.4]{berestycki1996inequalities} to the Riemannian setting.

\begin{theorem}[Krylov-Safonov Boundary Harnack inequality]\label{Thm:KSHarnack}
Let $(M,g)$ be a complete Riemannian manifold and $\Omega\subset M$ a bounded domain with possibly nonsmooth boundary. Fix $x_0\in \Omega$ and consider $G\subset \Omega \cup \Sigma$ compact, where $\Sigma$ is a smooth open subset of $\partial \Omega$. Then, there exists a positive constant $C$, depending on $x_0,\ \Omega,\ \Sigma,\ G,\ a,\ b,\ c_0$ and $C_0$, so that for every nonnegative function $u\in W^{2,p}\loc(\Omega \cup \Sigma)$, $p>n$, satisfying
\begin{align*}
\system{ll}{\mathcal{L}u=0 & a.e.\ \inn \Omega\\ u>0 & \inn \Omega\\ u=0 & \onn \Sigma}
\end{align*}
we have
\begin{align*}
u(x)\leq C u(x_0) \spc \forall x \in G.
\end{align*}
\end{theorem}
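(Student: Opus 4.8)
The plan is to deduce the Riemannian boundary Harnack inequality from its Euclidean counterpart by working in the harmonic atlas introduced in Definition~\ref{Def:Harmonic}, essentially in the same way the ABP estimate of Theorem~\ref{Thm:ABP} was localized. First I would recall that, since $G\subset \Omega\cup\Sigma$ is compact, it can be covered by finitely many harmonic charts $(W_i,\phi_i)$ of uniformly bounded geometry in which $\mathcal{L}$ reads as a uniformly elliptic operator in nondivergence (or divergence) form with coefficients controlled in terms of $a,b,c_0,C_0$ and the harmonic radius, exactly as computed in the remark following Theorem~\ref{Thm:WeakHarnackTrudinger}. On each such chart the Euclidean boundary Harnack inequality of \cite[Theorem~1.4]{berestycki1996inequalities} applies: for charts meeting $\Sigma$ one uses the boundary version (after straightening $\Sigma$, whose smoothness is assumed), and for interior charts one uses the classical Krylov--Safonov / interior Harnack inequality for nonnegative solutions of $\mathcal{L}u=0$.

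The next step is a chaining argument. Fix the reference point $x_0\in\Omega$. Given any $x\in G$, I would connect $x$ to $x_0$ by a chain of overlapping charts $W_{i_1},\dots,W_{i_m}$ from the finite cover, with $x\in W_{i_1}$ and $x_0\in W_{i_m}$; since the cover is finite and $G$, $x_0$ are fixed, the length $m$ of such a chain can be bounded by a constant depending only on $\Omega,\Sigma,G$ and the geometry. Applying the local (interior or boundary) Harnack inequality on each chart and passing through the nonempty overlaps $W_{i_q}\cap W_{i_{q+1}}$, one gets $u(x)\le C_{\mathrm{loc}}^{\,m}\, u(x_0)$, which is the desired estimate with $C=C_{\mathrm{loc}}^{\,m}$ depending only on the listed quantities. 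The positivity $u>0$ in $\Omega$ guarantees all the intermediate quantities are finite and the strong maximum principle keeps $u$ strictly positive along the chain.

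One technical point I would address carefully is the boundary behaviour near $\Sigma$: the chain may need to alternate between interior charts and boundary charts, and on a boundary chart the hypothesis $u=0$ on $\Sigma$ together with $\mathcal{L}u=0$ allows the use of the boundary Harnack/Carleson estimate, which compares $u$ at a point approaching $\Sigma$ with $u$ at a fixed interior ``corkscrew'' point of that chart. Because $\Sigma$ is smooth, after composing with $\phi_i$ we may assume the relevant portion of the boundary is a hyperplane piece, so the flat result applies verbatim up to the uniform metric distortion of condition~\ref{Cond1}. The regularity assumption $u\in W^{2,p}_{\mathrm{loc}}(\Omega\cup\Sigma)$ with $p>n$ is exactly what is needed to invoke the Euclidean statements, since it gives $u\in C^{1,\alpha}_{\mathrm{loc}}$ up to $\Sigma$.

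The main obstacle I anticipate is not any single estimate but the bookkeeping of constants through the chaining: one must verify that the number of charts needed to join an arbitrary $x\in G$ to $x_0$ is uniformly bounded (this uses compactness of $G\cup\{x_0\}$ inside $\Omega\cup\Sigma$ and the fixed positive lower bound on the harmonic radius), and that the local constant $C_{\mathrm{loc}}$ is genuinely uniform over the finitely many charts — which follows from the uniform control on the coefficients of $\mathcal{L}$ in harmonic coordinates established earlier. Everything else is a routine transcription of the Euclidean argument of \cite{berestycki1996inequalities} through the harmonic charts.
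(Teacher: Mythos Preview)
Your proposal is correct and matches the paper's approach: cover by finitely many local charts, apply the interior Krylov--Safonov Harnack inequality on interior charts and the Euclidean boundary Harnack of \cite[Theorem~1.4]{berestycki1996inequalities} on charts meeting $\Sigma$, then chain through overlaps to reach $x_0$. The paper's only organizational difference is that it explicitly splits into the cases $x\in G\cap\Omega_\epsilon$ (pure interior chaining) and $x\in G\setminus\Omega_\epsilon$ (one boundary step to a corkscrew-type point $z(x)$ in an interior chart, then interior chaining), which is exactly what your final paragraph anticipates.
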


\begin{proof}
Let $\mathcal{U}:=\bra{U_1,...,U_m}$ be a family of local charts of $M$ intersecting and covering $\partial \Omega$ and with the property that $\partial G \cap U_i$ is connected for every $i$. Fix $\e>0$ small enough so that $d^M(x_0,\partial \Omega)>2\e$,
\begin{align*}
\emptyset \neq \bra{x\in \Omega \ :\ d(x,\partial \Omega)\in (\e,2\e)}\subseteq \bigcup_{1\leq i\leq m} U_i
\end{align*}
and
\begin{align*}
\bra{x\in \Omega\ :\ d(x,\partial \Omega)>2\e} \neq \emptyset.
\end{align*}
Let $\Omega_{\e}$ a smooth subdomain of $\Omega$ satisfying
\begin{align*}
\bra{x\in \Omega\ :\ d(x,\partial \Omega)>2\e} \subseteq \Omega_{\e} \subseteq \bra{x\in \Omega\ :\ d(x,\partial \Omega)>\e}.
\end{align*}
\begin{center}
\begin{tikzpicture}
\path[name path=Omega1, draw] plot [smooth] coordinates {(1,4) (2,5) (2,6) (4,7) (6,6) (8,7) (10,6.5)};
\path[name path=Omega2, draw]  plot [smooth] coordinates {(10,6.5) (9,4) (8.5,3.5) (8,3.5) (7.5, 3) (7,3) (6.5,3.5) (6,3.5) (5.5,4) (5,4) (4.5,3.5) (4,3.5) (3.5,3) (3,3) (1,4)};
\path (6,6.5) node{$\Omega$};

\path[name path=Compact, draw, red] plot [smooth] coordinates {(9,5) (8.5,3.6) (8,3.5) (7.5, 3) (7,3) (6.5,3.5) (6,3.5) (5.5,4) (5,4) (4.5,3.5) (4,3.5) (3.5,3) (3,3) (2,3.7) (3,5) (4,5) (5,6) (6,5.5) (7,6) (8,6.5) (9,6) (9.5,6) (9,5)};

\path[dashed, draw] plot [smooth] coordinates {(2,4.5) (2.3,5.4) (2.2, 6) (4,6.7) (6,5.8) (8,6.7) (9.6,6.2) (9,4.5) (8.5,3.7) (8,3.7) (7.5, 3.2) (7,3.2) (6.5,3.7) (6,3.7) (5.5,4.3) (5,4.3) (4.5,3.8) (4,3.8) (3.5,3.3) (3,3.3) (1.6,4) (2,4.5)};
\path[fill, opacity=0.1] plot [smooth] coordinates {(2,4.5) (2.3,5.4) (2.2, 6) (4,6.7) (6,5.8) (8,6.7) (9.6,6.2) (9,4.5) (8.5,3.7) (8,3.7) (7.5, 3.2) (7,3.2) (6.5,3.7) (6,3.7) (5.5,4.3) (5,4.3) (4.5,3.8) (4,3.8) (3.5,3.3) (3,3.3) (1.6,4) (2,4.5)};
\path[fill, red, opacity=0.3] plot [smooth] coordinates {(9,5) (8.5,3.6) (8,3.5) (7.5, 3) (7,3) (6.5,3.5) (6,3.5) (5.5,4) (5,4) (4.5,3.5) (4,3.5) (3.5,3) (3,3) (2,3.7) (3,5) (4,5) (5,6) (6,5.5) (7,6) (8,6.5) (9,6) (9.5,6) (9,5)};
\path[red] (3.7,2.7) node{$G$};
\path (1.3,6.6) node{$\Omega_\e$};
\path[draw] (1.5,6.5)--(3,5.5);
\path[fill] (3,5.5) circle (2pt);
\end{tikzpicture}
\end{center}

Clearly, $\partial \Omega_{\e} \subset \bigcup_{1\leq i\leq m} U_i$. Now complete $\mathcal{U}$ to a cover of $\Omega$ by coordinate neighbourhoods of $M$
\begin{align*}
\mathcal{V}=\mathcal{U}\cup \mathcal{U}'= \mathcal{U} \cup \bra{U_{m+1},...,U_h}
\end{align*}
so that
\begin{align*}
\overline{\Omega}_{\e} \subset \bigcup_{m+1\leq i\leq h} U_i \spc \spc \textnormal{and} \spc \spc \partial \Omega \cap \left(\bigcup_{m+1\leq i\leq h} U_i \right) = \emptyset.
\end{align*}
Up to considering a larger family $\mathcal{U}'$, we can suppose that for every $i=m+1,...,h$ there exists $W_i\Subset U_i$ open subset such that
\begin{align*}
\overline{\Omega}_{\e} \subset \bigcup_{m+1\leq i\leq h} W_i, \spc \spc \partial \Omega \cap \left(\bigcup_{m+1\leq i\leq h} W_i \right) = \emptyset
\end{align*}
and
\begin{align*}
W_i \cap W_j \neq \emptyset \spc \Leftrightarrow \spc U_i \cap U_j \neq \emptyset.
\end{align*}
Lastly, up to considering a larger family $\mathcal{U}$ and a smaller $\e$, we can suppose that for every $i\in \bra{1,...,m}$ there exists a compact subset $E_i\subset \left(U_i \cap \overline{\Omega} \right)$ so that
\begin{align*}
\overline{\Omega}\setminus \Omega_\e \subset \bigcup_{1\leq i\leq m} E_i
\end{align*}
and every $E_i$ intersects at least one $W_j$.

\begin{center}
\begin{tikzpicture}
\path[draw] plot [smooth] coordinates {(-2,5) (-1,4) (0,3) (2,2) (3,2) (5,3) (6,3) (8,2) (9,2) (10,2.5)};

\path[fill, white] plot [smooth] coordinates {(8,2) (8.5,2) (9,2.5) (9,3) (8,4) (7,4.5) (6,5) (5,5) (4,4.5) (3,4) (2,4) (1.6,3.5) (1.6,2.7) (2.07,1.96) (3,2) (5,3) (6,3) (7,2.4) (8,2)};
\path[fill, black!30!green, opacity=0.2] plot [smooth] coordinates {(8,2) (8.5,2) (9,2.5) (9,3) (8,4) (7,4.5) (6,5) (5,5) (4,4.5) (3,4) (2,4) (1.6,3.5) (1.6,2.7) (2.07,1.96) (3,2) (5,3) (6,3) (7,2.4) (8,2)};
\path[fill, white] plot [smooth] coordinates{(4,2.5) (4.5,3) (5,4) (4,4.5) (3,4.5) (2,4.5) (1,5) (0,5) (-0.5,5) (-1,4.3) (-1,4) (0,3) (2.07,1.96)  (3,2) (4,2.5)};
\path[draw, dashed] plot [smooth] coordinates {(-2,6.5) (-1,5.5) (0,4.5) (2,3.5) (3,3.5) (5,4.5) (6,4.5) (8,3.5) (9,3.5) (10, 4)};
\path[fill, opacity=0.1] plot [smooth] coordinates {(-2,6.5) (-1,5.5) (0,4.5) (2,3.5) (3,3.5) (5,4.5) (6,4.5) (8,3.5) (9,3.5) (10,4)}--(10,6.5)--(-2,6.5);

\path[draw, black!30!green] plot [smooth] coordinates {(4,2.5) (4.5,3) (5,4) (4,4.5) (3,4.5) (2,4.5) (1,5) (0,5) (-0.5,5) (-1,4.3) (-1,4) (0,3) (2.07,1.96)  (3,2) (4,2.5)};
\path[fill, black!30!green, opacity=0.2] plot [smooth] coordinates {(4,2.5) (4.5,3) (5,4) (4,4.5) (3,4.5) (2,4.5) (1,5) (0,5) (-0.5,5) (-1,4.3) (-1,4) (0,3) (2.07,1.96)  (3,2) (4,2.5)};
\path[black!30!green] (0,2.2) node{$E_i$};

\path[draw, black!30!green] plot [smooth] coordinates {(8,2) (8.5,2) (9,2.5) (9,3) (8,4) (7,4.5) (6,5) (5,5) (4,4.5) (3,4) (2,4) (1.6,3.5) (1.6,2.7) (2.07,1.96) (3,2) (5,3) (6,3) (7,2.4) (8,2)};
\path[black!30!green] (6,2.2) node{$E_j$};

\path[draw, orange] plot [smooth] coordinates {(0,4) (1,3) (2,3) (3,3.1) (4,3.5) (5,3.5) (6,3.3) (7,3.8) (8,4) (8,5) (7,5.5) (6,5.5) (5,5.5) (4,5.5) (3,5) (2,5) (1,5.2) (0,4.5) (0,4)};
\path[fill, orange, opacity=0.2] plot [smooth] coordinates {(0,4) (1,3) (2,3) (3,3.1) (4,3.5) (5,3.5) (6,3.3) (7,3.8) (8,4) (8,5) (7,5.5) (6,5.5) (5,5.5) (4,5.5) (3,5) (2,5) (1,5.2) (0,4.5) (0,4)};
\path[orange] (7,5.8) node{$W_k$};
\path (-0.5,6) node{$\Omega_\e$};
\end{tikzpicture}
\end{center}

For every $i=m+1,...,h$ we can apply the Euclidean version of Krylov-Safonov Harnack inequality, \cite[Corollary 8.21]{gilbarg1977elliptic}, to the couple $W_i \Subset U_i$. Let $C_i=C_i(n,U_i, b, c_0, C_0, W_i)>0$ be the corresponding constant and define
\begin{align*}
K:=\max\dbra{m+1\leq i\leq h} C_i\geq 1.
\end{align*}
If $x\in G$, we have two possible cases:
\begin{enumerate}
\item \underline{$x\in G \cap \Omega_\e$}: we can consider a sequence of distinct neighbourhoods $U_{i_1},..,U_{i_t}\in \mathcal{U}'$ so that
\begin{align*}
x\in W_{i_1}, \spc \spc x_0 \in W_{i_t} \spc \spc \textnormal{and}\\ W_{i_j} \cap W_{i_{j+1}} \neq \emptyset \ \ \forall j=1,...,t-1
\end{align*} 
and by (Euclidean) Krylov-Safonov Harnack inequality, we get
\begin{align*}
u(x) & \leq \sup\dbra{W_{i_1}} u \leq K \inf\dbra{W_{i_1}} u \leq K \inf\dbra{W_{i_1}\cap W_{i_2}} u \\
&\leq K \sup\dbra{W_{i_2}}u \leq ... \leq K^t \inf\dbra{W_{i_t}} u \leq K^t u(x_0).
\end{align*}
Since the sequence of neighbourhoods can be chosen with at most $h-m$ different elements, it follows that
\begin{align*}
u(x)\leq  \widetilde{K}\ u(x_0)
\end{align*}
where $\widetilde{K}:=K^{k-m}$ does not depend on the choice of $x\in G\cap \Omega\dbra{\e}$.

\item \underline{$x\in G\setminus \Omega\dbra{\e}$}: without loss of generality, we can suppose $x	\in U_1$. By Theorem 1.4 in \cite{berestycki1996inequalities} applied to $U_1$ and $E_1$, we get
\begin{align*}
u(x)\leq B_1\ u(z(x))
\end{align*}
where $B_1=B_1 (n,a, b, c_0, C_0,U_1,E_1)>1$ and $z(x)\in U_1\cap W_j$ for some $j\geq m+1$, up to enlarge slightly $W_j$ and $E_1$. Retracing what done in previous point, we obtain that
\begin{align*}
u(x)\leq B_1\ u(z(x)) \leq B_1 \sup\dbra{W_j} u \leq B_1\ \widetilde{K}\ u(x_0).
\end{align*}
\end{enumerate}
Choosing $B:=\max\dbra{1\leq i\leq m} B_i$ and defining $C:=B \widetilde{K}\geq \widetilde{K}$, we get
\begin{align*}
u(x)\leq C\ u(x_0)
\end{align*}
for every $x\in G$, obtaining the claim.
\end{proof}

\begin{remark}\label{Rmk:Harnack1}
Observe that $C$ actually depends only on the neighbourhoods that $G$ intersects and not really on $G$, i.e. $C$ is ``stable'' under small perturbations.
\end{remark}

Next stage consists in the construction of a function $u_0$ which vanishes at those points of $\partial \Omega$ that admit a \textit{barrier}. It will be needed to show that the generalized principal eigenfunction vanishes at smooth portions of $\partial \Omega$.

\begin{definition}
We say that $y\in \partial \Omega$ admits a \textnormal{strong barrier} if there exists $r>0$ and $h\in W^{2,n}\loc(\Omega \cap B_r(y))$ which can be extended continuously to $y$ by setting $h(y)=0$ and so that
\begin{align*}
\operm h \leq -1.
\end{align*}
\end{definition}

\begin{remark}\label{Rmk:MillerConeCondition}
As proved by Miller in \cite{miller1967barriers}, the strong barrier condition at $y\in \partial \Omega$ is implied by the exterior cone condition in any local chart, i.e. by the fact that in every local chart around $y$ there exists an exterior truncated cone $C_y$ with vertex at $y$ and lying outside $\overline{\Omega}$. In particular, on every smooth sector $\Sigma$ of $\partial \Omega$ every point $y\in \Sigma$ satisfies the (local) exterior cone condition, and thus the strong barrier condition.
\end{remark}

\begin{theorem}\label{Thm:FunctionU0}
Let $(M,g)$ be a complete Riemannian manifold. Given a (possibly nonsmooth) bounded domain $\Omega \subset M$, there exists $u_0$ positive solution to $\operm u_0=-g_0\in \mathbb{R}_{<0}$ in $\Omega$ that can be extended as a continuous function at every point $y\in \partial \Omega$ admitting a strong barrier by setting $u_0(y)=0$.
\end{theorem}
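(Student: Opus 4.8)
The plan is to build $u_0$ as a locally uniform limit of solutions of the Dirichlet problem $\operm u=-g_0$ on a smooth exhaustion of $\Omega$, using the ABP estimate of Theorem \ref{Thm:ABP} to keep the sequence from blowing up, and then to read off the boundary behaviour from a comparison with the barrier. Concretely, fix a positive constant $g_0$ and a smooth exhaustion $\Omega_1\Subset\Omega_2\Subset\cdots$ with $\bigcup_k\Omega_k=\Omega$. Since $\operm$ is uniformly elliptic in divergence form with the regularity in \eqref{Cond:CoeffA} and has no zeroth order term, standard linear elliptic theory — the Fredholm alternative together with the weak maximum principle for $\operm$, which, lacking a zeroth order term, holds on every bounded domain — produces a unique $u_k\in W^{2,p}(\Omega_k)\cap C^{1,\alpha}(\overline{\Omega_k})$, $p>n$, with $\operm u_k=-g_0$ in $\Omega_k$ and $u_k=0$ on $\partial\Omega_k$. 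As $\operm u_k=-g_0<0$, the function $u_k$ is a non-constant supersolution with zero boundary values, so the strong maximum principle gives $u_k>0$ in $\Omega_k$.

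Next I would bound the sequence uniformly. Applying Theorem \ref{Thm:ABP} on $\Omega_k$ to $u_k$ (with $f\equiv -g_0$) yields $\sup_{\Omega_k}u_k\le C_k\,\textnormal{diam}(\Omega_k)\,\norm{g_0}{L^n(\Omega_k)}$. Since $\overline{\Omega}$ is compact it meets only finitely many of the fixed-radius harmonic charts, so by Remark \ref{Rmk:ABPExhaustion} the constants $C_k$ may all be replaced by a single $C(\Omega)$; together with $\textnormal{diam}(\Omega_k)\le\textnormal{diam}(\Omega)$ and $\norm{g_0}{L^n(\Omega_k)}\le g_0|\Omega|^{1/n}$ this gives $S:=\sup_k\norm{u_k}{L^\infty(\Omega_k)}<+\infty$. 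With this $L^\infty$ bound, interior $W^{2,p}$ and Schauder estimates applied in the harmonic charts furnish bounds for $u_k$ on compact subsets of $\Omega$ that are uniform in $k$, and a standard diagonal argument extracts a subsequence converging in $C^1\loc(\Omega)$ to some $u_0\in W^{2,p}\loc(\Omega)$ solving $\operm u_0=-g_0$ in $\Omega$, with $u_0\ge 0$. Since $\operm u_0=-g_0\neq 0$, $u_0$ is a non-constant supersolution, hence attains no interior minimum; being nonnegative it is therefore strictly positive throughout $\Omega$.

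Finally I would establish the boundary behaviour. Let $y\in\partial\Omega$ admit a strong barrier $h\in W^{2,n}\loc(\Omega\cap B_r(y))$, continuous at $y$ with $h(y)=0$ and $\operm h\le-1$; after shrinking $r$ we may take $h$ continuous on $\overline{\Omega\cap B_r(y)}$ with $h\ge 0$ there and $\delta:=\inf_{\overline{\Omega}\cap\partial B_r(y)}h>0$ (this is the form in which Miller's construction, recalled in Remark \ref{Rmk:MillerConeCondition}, delivers the barrier). Set $M:=\max\{g_0,\ \delta^{-1}S\}$. For every $k$ with $\Omega_k\cap B_r(y)\neq\emptyset$, on $\Omega_k\cap B_r(y)$ one has $\operm(Mh-u_k)=M\,\operm h+g_0\le g_0-M\le 0$, while on $\partial(\Omega_k\cap B_r(y))$ one has $Mh-u_k\ge 0$: on $\partial\Omega_k$ because $u_k=0$ and $h\ge 0$, and on $\Omega_k\cap\partial B_r(y)$ because $Mh\ge M\delta\ge S\ge u_k$. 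The weak maximum principle for $\operm$ then gives $u_k\le Mh$ on $\Omega_k\cap B_r(y)$, and letting $k\to\infty$, $0\le u_0\le Mh$ on $\Omega\cap B_r(y)$; since $h(x)\to h(y)=0$ as $x\to y$, we get $u_0(x)\to 0$, so $u_0$ extends continuously by $u_0(y)=0$.

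The genuinely delicate points are: (i) the uniform bound $S<+\infty$, which rests squarely on Theorem \ref{Thm:ABP} and the monotonicity of its constant under inclusion (Remark \ref{Rmk:ABPExhaustion}) — without it the $u_k$ could a priori grow as $\Omega_k\uparrow\Omega$ and the construction would collapse; and (ii) arranging the barrier near $y$ in the form used above starting from the bare strong-barrier hypothesis. The solvability of the approximating problems, the passage to the limit, and the comparison itself are then routine.
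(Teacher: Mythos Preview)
Your argument is correct, but the route to the uniform bound differs from the paper's. The paper does not invoke the ABP estimate here: instead it enlarges $\Omega$ to a smooth bounded $\Lambda\Supset\overline{\Omega}$, takes the positive Dirichlet Green function $G(\cdot)=\mathcal{G}(x_0,\cdot)$ of $\operm-1$ on $\Lambda$ with pole $x_0\notin\overline{\Omega}$, sets $g_0:=\min_{\overline{\Omega}}G$ and $G_0:=\max_{\overline{\Omega}}G$, and observes that $\operm(u_j+G)=-g_0+G\ge 0$ on each exhausting domain, whence the ordinary maximum principle gives $u_j\le G_0$ directly. This also makes the sequence $\{u_j\}$ monotone increasing, so no subsequence extraction is needed. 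Your approach via Theorem~\ref{Thm:ABP} and Remark~\ref{Rmk:ABPExhaustion} is equally valid and arguably more in the spirit of the section; the Green-function comparison has the advantage of being independent of the ABP machinery and of pinning down the specific constant $g_0$.

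On the barrier step, the paper avoids the extra hypotheses you place on $h$ (continuity up to $\partial\Omega$, positivity, and $\inf_{\partial B_r}h>0$) by replacing $h$ with $\tilde h:=h+\epsilon\,d(\cdot,y)^2$ for $\epsilon$ small enough that $\operm\tilde h\le-\tfrac12$; then automatically $\tilde h\ge\epsilon r^2/4=:\delta$ on $\{d(\cdot,y)=r/2\}$, and one compares $u_j$ against $G_0\tilde h/\delta$ on $H_j\cap B_{r/2}(y)$. This trick resolves the point you flagged as delicate without appealing to the specific form of Miller's construction.
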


\begin{proof}
Consider $\Lambda\subset M$ a bounded, open and smooth domain containing $\overline{\Omega}$ properly and let $\mathcal{G}$ be the positive Dirichlet Green function on $\overline{\Lambda}$ associated to the differential operator $\mathcal{M}-1$. Fixed $x_0\in \Lambda\setminus \overline{\Omega}$, let $G(\cdot):=\mathcal{G}(x_0,\cdot)$ so to have
\begin{align*}
\system{ll}{\mathcal{M}G=G & \inn \Omega\\ G>0 & \inn \overline{\Omega}}
\end{align*}
and define
\begin{align*}
g_0=\min_{\overline{\Omega}} G \spc \spc \textnormal{and} \spc \spc G_0=\max_{\overline{\Omega}} G.
\end{align*}
Consider an exhaustion $\bra{H_j}_j$ of $\Omega$ by smooth nested subdomains satisfying $\overline{H}_j\subset H\dbra{j+1}$ and let $u_j$ be the solutions of
\begin{align*}
\system{ll}{\mathcal{M}u_j=-g_0 & \inn H_j\\ u_j=0 & \onn \partial H_j.}
\end{align*}
In particular, $u_j\in W\ubra{2,p}(H_j)$ for every $p>n$ and, by the standard maximum principle, $\bra{u_j}_j$ is an increasing sequence of positive functions. Moreover
\begin{align*}
\mathcal{M}(u_j+G)=-g_0+G\geq 0
\end{align*}
so, again by maximum principle, it follows that
\begin{align*}
u_j+G\leq \max_{\partial \Omega_j} G\leq G_0,
\end{align*}
i.e. $u_j\leq G_0-G\leq G_0$ for every $j$. Hence there exists a function $u_0$ so that
\begin{align*}
& u_j \rightharpoonup u_0 \spc \inn W^{2,p}(E)\\
& u_j \rightarrow u_0 \spc \inn C^1(E)
\end{align*}
for every $p>n$ and every $E\subset \Omega$ compact. Moreover, $\mathcal{M}u_0=-g_0$ and $0<u_0\leq G_0$ by construction.

The next step consists in proving that $u_0$ can be extended continuously to $0$ at every $y\in \partial \Omega$ admitting a strong barrier. Fix such a $y\in \partial \Omega$ admitting a strong barrier, i.e. so that for some $B_r(y)$ there exists in $U=B_r(y)\cap \Omega$ a positive function $h\in W\ubra{2,n}\loc(U)$ satisfying $\mathcal{M}h\leq -1$ which can be extended continuously to $y$ by imposing $h(y)=0$. Without loss of generality, we can suppose $r<\textnormal{inj}(y)$.
Let $h$ be the strong barrier associated to $y$ and choose $j$ big enough so that $V=H_j \cap B_{r/2}(y)\neq \emptyset$: choosing $\e>0$ small so that
\begin{align*}
\e \mathcal{M}\left(d(x,y)^2\right)\leq \frac{1}{2} \spc \inn U
\end{align*}
the function $\widetilde{h}=h+\e d(x,y)^2$ satisfies
\begin{align*}
\mathcal{M}\widetilde{h}\leq -\frac{1}{2} \spc \inn U.
\end{align*}
Moreover, if $d(x,y)=\frac{r}{2}$ and $x\in \overline{H}_j$, then
\begin{align*}
\widetilde{h}(x)\geq \e \frac{r^2}{4}=:\delta
\end{align*}
and, up to decrease $\e$, we can suppose $\delta\leq 1$ and that the function $w=G_0 \frac{\widetilde{h}}{\delta}-u_j$ satisfies
\begin{align*}
\system{ll}{\mathcal{M}w \leq 0 & \inn V \\ w\geq 0 & \onn \partial V.}
\end{align*}
By the Maximum Principle, it follows $w\geq 0$ in $V$, i.e.
\begin{align*}
u_j(x)\leq G_0 \frac{\widetilde{h}(x)}{\delta} \spc \inn V.
\end{align*}
Fixing $x\in H_j \cap B\dbra{r/2}(y)$ and letting $j\to +\infty$, it follows
\begin{align*}
u_0(x)\leq G_0 \frac{\widetilde{h}(x)}{\delta}.
\end{align*}
Since the previous inequality holds for every $x\in H_j\cap B\dbra{r/2}(y)$ and for every $j$ big enough, by the continuity of $\widetilde{h}$ in $y$ the claim follows. 
\end{proof}

\begin{remark}
Theorem \ref{Thm:FunctionU0} has been obtained thanks to an adaptation of the argument presented in \cite[Section 3]{MR1258192}. Unless small details, the structure of the proof remained unchanged with respect to the one by Berestycki, Nirenberg and Varadhan.
\end{remark}

Finally, we can prove the existence of a generalized principal eigenfunction in any bounded Riemannian domain

\begin{theorem}\label{Thm:BoundedPrincipalEigenfunction}
Let $(M,g)$ be a complete Riemannian manifold of dimension $\dim(M)=n$ and consider a (possibly nonsmooth) bounded domain $\Omega \subset M$. If $u_0$ is the function obtained in Theorem \ref{Thm:FunctionU0}, then
\begin{enumerate}
\item there exists a principal eigenfunction $\phi$ of $\operl$
\begin{align*}
\mathcal{L}\phi =-\lambda_1 \phi
\end{align*}
so that $\phi\in W\ubra{2,p}\loc (\Omega)$ for every $p<+\infty$;
\item normalizing $\phi$ to have $\phi(x_0)=1$ for a fixed $x_0\in \Omega$, there exists a positive constant $C$, depending only on $x_0,\ \Omega,\ a,\ b,\ c_0$ and $C_0$, so that $\phi\leq C$;
\item there exists a positive constant $E>0$ so that $\phi\leq Eu_0$.
\end{enumerate}
\end{theorem}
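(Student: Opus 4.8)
The plan is to follow the Berestycki--Nirenberg--Varadhan scheme, in the form used by Nordmann, constructing $\phi$ as a locally uniform limit of classical principal eigenfunctions of $\operl$ on a smooth exhaustion of $\Omega$ and reading off (1)--(3) from uniform estimates on the approximating sequence. Fix a smooth exhaustion $\Omega_1\Subset\Omega_2\Subset\cdots$ of $\Omega$ with $x_0\in\Omega_1$, and on each $\Omega_k$ let $(\lambda_1^k,\phi_k)$ be the classical Dirichlet principal eigenpair, so $\phi_k>0$, $\operl\phi_k=-\lambda_1^k\phi_k$, $\phi_k|_{\partial\Omega_k}=0$, normalised by $\phi_k(x_0)=1$; interior elliptic regularity gives $\phi_k\in W^{2,p}\loc$ for every $p<\infty$. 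By domain monotonicity of the generalised principal eigenvalue — a positive supersolution of $\operl+\lambda$ on $\Omega_{k+1}$ restricts to one on $\Omega_k$, the boundary condition being automatic since $\partial\Omega_k\subset\Omega_{k+1}$ — the sequence $\lambda_1^k$ is nonincreasing and bounded below (e.g.\ $\lambda_1^k\ge-\sup_\Omega|c|\ge-b$), hence $\lambda_1^k\downarrow\lambda_1$ for some $\lambda_1\in\rr$. Writing the equation as $\operm\phi_k=-(c+\lambda_1^k)\phi_k$ and applying the interior Harnack inequality (Theorem \ref{Thm:KSHarnack} with $\Sigma=\emptyset$) along Harnack chains from $x_0$, we get $\sup_G\phi_k\le C(G)$ uniformly in $k$ for every compact $G\Subset\Omega$; the interior $W^{2,p}$ estimates then bound $\{\phi_k\}$ in $W^{2,p}\loc(\Omega)$, and a Rellich/diagonal extraction produces $\phi\in W^{2,p}\loc(\Omega)$, $\phi\ge0$, $\phi(x_0)=1$, with $\phi_k\to\phi$ in $C^1\loc$ and weakly in $W^{2,p}\loc$ and $\operl\phi=-\lambda_1\phi$. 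The strong maximum principle upgrades $\phi\ge0$ to $\phi>0$; since $\phi$ is then a positive supersolution of $\operl+\lambda$ for each $\lambda\le\lambda_1$, while $\lambda_1^{-\operl}(\Omega)\le\lambda_1^{-\operl}(\Omega_k)=\lambda_1^k$ for every $k$, the limit is exactly $\lambda_1=\lambda_1^{-\operl}(\Omega)$, which proves (1).

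For the uniform bound (2), note that $\phi_k\ge0$ and $\phi_k=0$ on the smooth boundary $\partial\Omega_k$, so the ABP inequality of Theorem \ref{Thm:ABP} applied on $\Omega_k$ to $\operm\phi_k=-(c+\lambda_1^k)\phi_k\ge-(c+\lambda_1^k)^+\phi_k$ gives
\[
\sup_{\Omega_k}\phi_k\ \le\ C\,\textnormal{diam}(\Omega)\,\big\|(c+\lambda_1^k)^+\phi_k\big\|_{L^n(\Omega_k)}\ \le\ C'\,\|\phi_k\|_{L^n(\Omega_k)},
\]
where $C$, hence $C'$, can be taken independent of $k$ by Remark \ref{Rmk:ABPExhaustion} (all $\Omega_k\subseteq\Omega$ are covered by the same harmonic atlas and $\textnormal{diam}(\Omega_k)\le\textnormal{diam}(\Omega)$), and $(c+\lambda_1^k)^+$ is uniformly bounded since $\lambda_1^k$ stays in the bounded interval $[\lambda_1,\lambda_1^1]$. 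Thus everything reduces to a \emph{uniform} $L^n$ bound on $\phi_k$, equivalently (via $\int\phi_k^n\le(\sup\phi_k)^{n-1}\int\phi_k$) a uniform $L^1$ bound; this is extracted from the normalisation $\phi_k(x_0)=1$ by the Harnack inequality together with a comparison/integration by parts against a fixed positive function — the Green function of $\operm-1$ with pole outside $\overline\Omega$, or $u_0$ itself — which controls $\int_{\Omega_k}\phi_k$ by $\phi_k(x_0)$. Feeding this back into the displayed inequality and letting $k\to\infty$ yields $\sup_\Omega\phi\le C$ with $C$ depending only on $x_0,\Omega,a,b,c_0,C_0$.

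For the domination by $u_0$ in (3), let $u_0$ be as in Theorem \ref{Thm:FunctionU0}, so $\operm u_0=-g_0<0$ and $0<u_0\le G_0$ in $\Omega$, and put $\beta:=\sup_\Omega|c|+|\lambda_1^1|$. Fix $E>\beta C/g_0$ with $C$ the constant of (2) and set $v_k:=Eu_0-\phi_k$ on $\Omega_k$. On the open set $\{v_k<0\}$ one has $0<Eu_0<\phi_k\le C$, hence $(c+\lambda_1^k)\phi_k\le\beta C<Eg_0$ and therefore $\operm v_k=-Eg_0+(c+\lambda_1^k)\phi_k<0$ there; since $v_k=Eu_0>0$ on $\partial\Omega_k$, the set $\{v_k<0\}$ is compactly contained in $\Omega_k$ with $v_k=0$ on its boundary, so the minimum principle for $\operm$-supersolutions forces $\{v_k<0\}=\emptyset$. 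Thus $\phi_k\le Eu_0$ in $\Omega_k$, and letting $k\to\infty$ gives $\phi\le Eu_0$ in $\Omega$.

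The substantive difficulty is the uniform bound in (2): because the eigenvalue equation is homogeneous, neither ABP nor Harnack applied to a single $\phi_k$ produces a bound, and the real obstacle is to prevent the mass of $\phi_k$ from escaping to $\partial\Omega$ as $k\to\infty$ — that is, to secure the uniform $L^1$/$L^n$ estimate above. It is precisely here that the ABP inequality of Theorem \ref{Thm:ABP} (available on the smooth approximating domains regardless of the regularity of $\partial\Omega$) and the boundary Harnack inequality of Theorem \ref{Thm:KSHarnack} are essential; once (2) is established, (3) — and with it the continuity of $\phi$ at the barrier points of $\partial\Omega$, since there $0\le\phi\le Eu_0\to0$ — follows from the elementary comparison above, and the remainder is a routine transplantation of the Euclidean argument of \cite{nordmann2021maximum, MR1258192}.
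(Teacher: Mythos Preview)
Your outline for (1) and (3) is fine and essentially matches the paper, but there is a genuine gap in your argument for (2). You reduce the uniform $\sup$ bound, via ABP on all of $\Omega_k$, to a uniform $L^1$ (or $L^n$) bound on $\phi_k$, and then assert that this $L^1$ bound ``is extracted from the normalisation $\phi_k(x_0)=1$ by the Harnack inequality together with a comparison/integration by parts against a fixed positive function.'' This step is not justified, and I do not see how to make it work without circularity: the natural comparisons (with $u_0$, or with a Green-type function for $\operm-1$) all seem to require an a priori $\sup$ bound on $\phi_k$, which is exactly what you are trying to prove; and integration by parts against a solution of an adjoint problem leads back to an inequality of the form $\int\phi_k\le \text{const}\cdot\int\phi_k$ with a coefficient you do not control. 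Interior Harnack only bounds $\phi_k$ on fixed compacta, so it says nothing about mass near $\partial\Omega$; and the boundary Harnack inequality you invoke compares solutions vanishing on a \emph{fixed} smooth boundary portion, whereas here $\partial\Omega_k$ varies with $k$.

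The paper's resolution is different and is precisely the missing idea: one does \emph{not} apply ABP on all of $\Omega_k$. Instead, fix a compact $F\subset\Omega$ with $x_0\in\mathrm{int}\,F$ and $|\Omega\setminus F|=\delta$ small. Interior Harnack on $\Omega_1$ gives $\max_F\phi_k\le C$. Then apply ABP to $v=\phi_k-C$ on the \emph{collar} $U_k:=\Omega_k\setminus F$ (where $v\le0$ on both pieces of $\partial U_k$): since $\operm v\ge-(b+\mu_k)\phi_k$, one gets
\[
\max_{U_k}\phi_k - C \ \le\ C_\Lambda\,\textnormal{diam}(\Lambda)\,(b+\mu_k)\,\max_{U_k}\phi_k\cdot\delta^{1/n}.
\]
Using the uniform upper bound $\mu_k\le K/r^2$ (from a ball $B_r\subset F$) and choosing $\delta$ so small that the coefficient on the right is $\le\tfrac12$, the right-hand side is absorbed and $\max_{\Omega_k}\phi_k\le 2C$ follows. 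This ``small-measure collar + absorption'' trick is the heart of the BNV argument and is what your proposal is missing.
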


\begin{remark}
The proof proceeds (more or less) as in \cite[Theorem 2.1]{MR1258192}. We present it for completeness.
\end{remark}

\begin{proof}
Fix $x_0 \in \Omega$ and consider a compact subset $F\subset \Omega$ so that $x_0 \in \textnormal{int}\ F$ and $|\Omega \setminus F|=\delta$, where $\delta>0$ is a constant (small enough) to be chosen. Let $\bra{\Omega_j}_j$ be a sequence of relatively compact smooth subdomains of $\Omega$ with $F\subset \Omega_1$ and satisfying
\begin{align*}
\overline{\Omega}_i\subset \Omega_{i+1} \spc \forall i \spc \spc \textnormal{and} \spc \spc \bigcup_i \Omega_i =\Omega.
\end{align*}
By the smoothness of $\Omega_j$, for every $j$ there exists a couple of principal eigenelements $(\mu_j,\phi_j)$ for $\operl$ so that
\begin{align*}
\system{ll}{\operl \phi_j = -\mu_j \phi_j & \inn \Omega_j \\ \phi_j >0 & \inn \Omega_j\\ \phi_j=0 & \onn \partial \Omega_j}
\end{align*}
rescaled so that $\phi_j(x_0)=1$ and with  $\phi_j \in W^{1,p}(\Omega_j)$ for any $p<+\infty$. Moreover, since $\phi_k>0$ in $\overline{\Omega}_j$ for $k>j$, by the standard maximum principle it follows that $\mu_j>\mu\dbra{j+1}>\lambda_1:=\lambda_1^{-\operl}(\Omega)$ for every $j$. In particular, by monotonicity $\{\mu_j\}_j$ converges to a certain $\mu\geq \lambda_1$.

By the standard Harnack inequality applied in $\Omega_1$ it follows that there exists a positive constant $C=C(n,a,b,c_0,C_0,x_0,\Omega_1,F)$ so that
\begin{align}\label{Eq:AppBoundedPrincipalEigenfunction0}
\max_{F} \phi_j \leq C\ \phi_j (x_0)=C
\end{align}
for every $j\geq 1$. 

Now consider $U_j:=\Omega_j \setminus F$ and $v=\phi_j-C$: we have
\begin{align*}
\operm v = - c \phi_j - \mu_j \phi_j \geq -b \phi_j -\mu_j \phi_j
\end{align*}
and
\begin{align*}
\limsup_{x\to \partial U_j} v \leq 0.
\end{align*}
Now let $\Lambda$ be a smooth, bounded domain containing $\overline{\Omega}$ and let $C_{\Lambda}$ be the constant given by Theorem \ref{Thm:ABP} on $\Lambda$. Observing that $\overline{U}_j \subset \Lambda$ for every $j$, by Theorem \ref{Thm:ABP} and Remark \ref{Rmk:ABPExhaustion} it follows that
\begin{equation}\label{Eq:AppBoundedPrincipalEigenfunction}
\begin{split}
\max_{\overline{U}_j} \phi_j - C &=\max_{\overline{U}_j} v \\
& \leq C_{\Lambda}\ \textnormal{diam}(\Lambda)\ \norm{(b+\mu_j)\phi_j}{L^n(U_j)} \\
& \leq C_{\Lambda}\ \textnormal{diam}(\Lambda)\ (b+\mu_j)\ \max_{\overline{U}_j} \phi_j\ \delta^\frac{1}{n}.
\end{split}
\end{equation}
Let $B_r$ be a ball completely contained in $F$: by \cite[Lemma 6.3]{padilla1997principal} there exists a positive constant $K$, depending only on $\textnormal{dim}(M)$ and on the coefficients of $\operl$, so that
\begin{align*}
\mu_j\leq \frac{K}{r^2}.
\end{align*}
Using the previous inequality in \eqref{Eq:AppBoundedPrincipalEigenfunction}, we get
\begin{align*}
\max_{\overline{U}_j} \phi_j - C &\leq C_{\Lambda}\ \textnormal{diam}(\Lambda)\ \left(b+\frac{K}{r^2}\right)\ \max_{\overline{U}_j} \phi_j\ \delta^\frac{1}{n}
\end{align*}
and choosing  $\delta$ small enough so that
\begin{align*}
 C_{\Lambda}\ \textnormal{diam}(\Lambda)\ \left(b+\frac{K}{r^2}\right)\ \delta^\frac{1}{n}\leq \frac{1}{2}
\end{align*}
we obtain
\begin{align*}
\max_{\overline{U}_j} \phi_j \leq 2C
\end{align*}
that, together with \eqref{Eq:AppBoundedPrincipalEigenfunction0}, implies
\begin{align*}
\max_{\overline{\Omega}_j} \phi_j \leq 2C=:C.
\end{align*}
By interior $W^{2,p}$ estimates (\cite[Theorem 6.2]{gilbarg1977elliptic}), it follows that
\begin{align*}
\norm{\phi_k}{W^{2,p}(\Omega_j)}\leq C_j \spc \spc \forall k\geq j+1
\end{align*}
implying the existence of a function $\phi$, positive in $\Omega$, so that
\begin{align*}
& \phi_j \rightharpoonup \phi \spc \spc \inn W^{2,p}\loc (\Omega)\\
& \phi_j \rightarrow \phi \spc \spc \inn W^{2,\infty}\loc (\Omega).
\end{align*}
By construction, $\phi$ solves
\begin{align*}
\mathcal{L}\phi=-\mu \phi \spc \inn \Omega
\end{align*}
with $\phi(x_0)=1$ and $\phi\leq C$. Moreover, by definition of $\lambda_1$ and by the fact that $\mu\geq \lambda_1$, it follows that $\mu=\lambda_1$, obtaining the claims 1 and 2.

Lastly, observing that
\begin{align*}
\system{ll}{\mathcal{M}\phi_j=-(\mu_j+c)\phi_j\geq -(\mu_j+b)\phi_j & \inn \Omega_j\\ \phi_j=0 & \onn \partial \Omega_j}
\end{align*}
and recalling that
\begin{align*}
\system{ll}{\mathcal{M}u_0=-g & \inn \Omega \\ u_0>0 & \inn \Omega}
\end{align*}
we get
\begin{align*}
\system{ll}{\mathcal{M}\left(\phi_j- \frac{C}{g_0} (\mu_j^+ + b)u_0 \right)\geq -(\mu_j+b) C + (\mu_j^+ + b) C\geq 0 & \inn \Omega_j \\ \phi_j- \frac{C}{g_0} (\mu_j^+ + b)u_0<0 & \onn \partial \Omega_j}
\end{align*}
and, by standard maximum principle,
\begin{align*}
\phi_j \leq \frac{C}{g_0}(\mu_j^+ + b)u_0 \spc \inn \Omega_j.
\end{align*}
Letting $j\to \infty$, it follows
\begin{align*}
\phi \leq \frac{C}{g_0} (\lambda_1^+ + b)u_0=Eu_0.
\end{align*}
\end{proof}

\begin{remark}\label{Rmk:PhiVanishesBoundary}
Using remark \ref{Rmk:MillerConeCondition}, Theorem \ref{Thm:FunctionU0} and the third point of the previous theorem, we can see that the function $\phi$ vanishes on every smooth portion of $\partial \Omega$. As a consequence, if we consider a smooth domain $\Omega$ and $x_0\in \partial \Omega$, then for every $R>0$ there exists a couple of eigenelements $(\varphi^R, \lambda_1^{-\operl})$ of the following Dirichlet problem
\begin{align*}
\system{ll}{\operl\varphi^R=-\lambda_1^R \varphi^R & \inn \Omega\cap B_R(x_0)\\ \varphi^R=0 & \onn \textnormal{smooth portions of}\ \partial(\Omega \cap B_R(x_0)).}
\end{align*}
\end{remark}

\subsection{Generalized principal eigenfunction in smooth unbounded domains}
As a consequence of previous construction, we get the analogue of Theorem 1.4 in \cite{berestycki2015generalizations}. The Euclidean proof can be retraced step by step thanks to Theorem \ref{Thm:KSHarnack} and Theorem \ref{Thm:BoundedPrincipalEigenfunction}. We propose it for completeness

\begin{theorem}\label{Thm:PrincipalEigenvalueUnbounded}
Given an unbounded smooth domain $\Omega\subset M$, for any $R>0$ consider the truncated eigenvalue problem
\begin{align*}
\system{ll}{\operl\varphi^R=-\lambda_1^R \varphi^R & \inn \Omega\cap B_R\\ \varphi^R=0 & \onn \partial(\Omega \cap B_R)}.
\end{align*}
where $B_R=B_R(x_0)$ for a fixed $x_0\in \partial \Omega$.
Then:
\begin{enumerate}
\item for almost every $R>0$ there exists and is well defined the couple of eigenelemnts $(\lambda_1^R,\varphi^R)$, with $\varphi^R$ positive in $\Omega\cap B_R$;
\item $\lambda_1^R \searrow \lambda_1$ as $R\to+\infty$;
\item $\varphi^R$ converges in $C^{2,\alpha}\loc$ to some $\varphi$ principal eigenfunction of $\Omega$.
\end{enumerate}
\end{theorem}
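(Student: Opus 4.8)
The plan is to transplant the Euclidean argument of Berestycki--Rossi (\cite{berestycki2015generalizations}), using Theorem~\ref{Thm:BoundedPrincipalEigenfunction} to produce the truncated eigenelements, Theorem~\ref{Thm:KSHarnack} (together with the stability statement in Remark~\ref{Rmk:Harnack1}) to obtain estimates uniform in $R$, and interior/boundary elliptic estimates to pass to the limit. Throughout, $M$ is complete, so $\bigcup_R B_R(x_0)=M$ and the domains $\Omega\cap B_R$ exhaust $\Omega$. \emph{Existence of $(\lambda_1^R,\varphi^R)$.} By Sard's theorem applied to $d(x_0,\cdot)|_{\partial\Omega}$, for almost every $R>0$ the geodesic sphere $\partial B_R(x_0)$ is transverse to $\partial\Omega$; then $\Omega\cap B_R$ is a bounded domain whose boundary is smooth off the codimension--two ``edge'' $\partial\Omega\cap\partial B_R$, and at every boundary point (including edge points, where the exterior of $\overline{\Omega\cap B_R}$ contains a truncated cone) the exterior cone condition of Remark~\ref{Rmk:MillerConeCondition} holds. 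Applying Theorem~\ref{Thm:BoundedPrincipalEigenfunction} to the connected component of $\Omega\cap B_R$ that exhausts $\Omega$ produces a pair $(\lambda_1^R,\varphi^R)$ with $\varphi^R>0$ inside, $\operl\varphi^R=-\lambda_1^R\varphi^R$ in $W^{2,p}\loc$, and, by Remark~\ref{Rmk:PhiVanishesBoundary}, $\varphi^R$ extending continuously by $0$ on the smooth portions of $\partial(\Omega\cap B_R)$; since the edge is negligible and admits a strong barrier, in fact $\varphi^R\in C^0(\overline{\Omega\cap B_R})$ with $\varphi^R=0$ on the whole of $\partial(\Omega\cap B_R)$. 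Normalizing $\varphi^R(x_1)=1$ at a fixed $x_1\in\Omega$ gives~(1).

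\emph{Monotonicity and uniform bounds.} If $R'<R$, any positive supersolution of $\operl+\lambda$ on $\Omega\cap B_R$ restricts to one on $\Omega\cap B_{R'}$ (its boundary trace remains $\ge 0$), so $\lambda_1^R\le\lambda_1^{R'}$; restricting instead a positive supersolution from $\Omega$ itself gives $\lambda_1^R\ge\lambda_1:=\lambda_1^{-\operl}(\Omega)$. Hence $\lambda_1^R\searrow\lambda_\infty\ge\lambda_1$ for some $\lambda_\infty$, which is the monotonicity in~(2) pending the identification $\lambda_\infty=\lambda_1$. Fix $R_0$: for $R\ge R_0$ the operator $\operl+\lambda_1^R=\operm+(c+\lambda_1^R)$ solved by $\varphi^R$ has zeroth-order coefficient bounded independently of $R$, since $\lambda_1^R\in[\lambda_1,\lambda_1^{R_0}]$. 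Given any compact $G\subset\Omega$ (or $G\subset\Omega\cup\Sigma$ with $\Sigma$ a smooth piece of $\partial\Omega$), choose $R$ with $G\subset B_R$; Theorem~\ref{Thm:KSHarnack}, whose constant depends only on the harmonic neighbourhoods met by $G$ and not on $R$ (Remark~\ref{Rmk:Harnack1}), yields $\sup_G\varphi^R\le C(G)\,\varphi^R(x_1)=C(G)$. Thus $\{\varphi^R\}_{R\ge R_0}$ is locally uniformly bounded on $\Omega$ up to the smooth boundary, hence bounded in $W^{2,p}\loc$ for every $p$, hence — by interior and boundary elliptic estimates — precompact in $C^{2,\alpha}\loc$.

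\emph{Passage to the limit.} Extract $\varphi^{R_k}\to\varphi$ in $C^{2,\alpha}\loc$: then $\varphi\ge 0$, $\varphi(x_1)=1$, and $\operl\varphi=-\lambda_\infty\varphi$ in $\Omega$, so by connectedness of $\Omega$ and the strong maximum principle $\varphi>0$. In particular $\varphi$ is a positive (super)solution of $(\operl+\lambda_\infty)\varphi=0$ in $\Omega$ vanishing on $\partial\Omega$, whence $\lambda_\infty\le\lambda_1$; combined with the previous step, $\lambda_\infty=\lambda_1$, so $\varphi$ is a principal eigenfunction of $\Omega$, which upgrades~(2) to $\lambda_1^R\searrow\lambda_1$. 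Finally, every subsequential limit is a positive principal eigenfunction normalized at $x_1$, and — exactly as in the bounded construction of Theorem~\ref{Thm:BoundedPrincipalEigenfunction} and in \cite{berestycki2015generalizations} — the limit is independent of the subsequence, so the whole family converges, $\varphi^R\to\varphi$ in $C^{2,\alpha}\loc$ as $R\to+\infty$, giving~(3).

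\emph{Main difficulty.} The crux is the uniform-in-$R$ local bound on $\varphi^R$ up to the smooth part of $\partial\Omega$: one must invoke the two-sided control $\lambda_1^R\in[\lambda_1,\lambda_1^{R_0}]$ from the monotonicity step \emph{before} applying Theorem~\ref{Thm:KSHarnack}, so that the coefficient $c+\lambda_1^R$ and hence the Krylov--Safonov constant stay in a fixed class, and one must use the stability of that constant under perturbation of $G$ (Remark~\ref{Rmk:Harnack1}) to decouple it from $R$. A secondary technical point is the Sard/Miller verification that $\varphi^R$ vanishes on the \emph{entire} boundary of $\Omega\cap B_R$, edge included, for a.e.\ $R$.
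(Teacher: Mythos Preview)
Your proposal is essentially correct and follows the same route as the paper: existence of the truncated eigenelements via Theorem~\ref{Thm:BoundedPrincipalEigenfunction}, uniform local bounds via the boundary Harnack inequality Theorem~\ref{Thm:KSHarnack}, and compactness via elliptic estimates. The paper is slightly terser---it works along a discrete exhaustion by connected components $\Omega_i$ and cites Agmon \cite{agmon1982positivity} directly for $\lambda_1^{-\operl}(\Omega_i)\to\lambda_1^{-\operl}(\Omega)$, whereas you argue monotonicity and the identification $\lambda_\infty=\lambda_1$ by hand; your Sard/cone discussion and the observation that $\lambda_1^R\in[\lambda_1,\lambda_1^{R_0}]$ must be fixed \emph{before} invoking Harnack are details the paper leaves implicit.

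One caution: your final sentence claims that the limit is independent of the subsequence, appealing to uniqueness of the normalized principal eigenfunction. That uniqueness is \emph{not} established here and in fact can fail for unbounded domains (already in Euclidean half-spaces). The paper's own proof does not claim this either: it extracts a convergent subsequence by a diagonal argument and calls the limit ``some $\varphi$''. So read item~(3) of the statement as convergence along a subsequence, and drop the uniqueness assertion.
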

\begin{proof}
By the smoothness of $\Omega$, for any $i\in \mathbb{N}$ there exists $r(i)\geq i$ so that $\Omega \cap B_i$ is contained in a single connected component $\Omega_i$ of $\Omega \cap B\dbra{r(i)}$. Moreover, we can suppose $\Omega_i\subset \Omega\dbra{i+1}$ for every $i$. By \cite{agmon1982positivity},
 it follows that
\begin{align*}
\lim_{i\to \infty} \lambda_1^{-\mathcal{L}}(\Omega_i)=\lambda_1^{-\mathcal{L}}(\Omega).
\end{align*}
Now fix $x_1\in \Omega_1$ and let $\varphi^i$ the generalized principal eigenfunction of $-\mathcal{L}$ in $\Omega_i$, obtained by Theorem \ref{Thm:BoundedPrincipalEigenfunction}, normalized so that $\varphi^i(x_1)=1$. Fixed $i>j\in \mathbb{N}$, since $\varphi^i \in W^{2,p}(\Omega \cap B_j)$ for every $p<+\infty$ and vanishes on $\partial \Omega \cap B_j$, by Theorem \ref{Thm:KSHarnack} with $\Omega=\Omega_{j+1}$, $\Sigma=\partial \Omega\cap B_{j+1}$ and $G=\overline{\Omega \cap B_j}$, it follows that there exists a positive constant $C_j$ so that
\begin{align*}
\sup_{\Omega\cap B_j} \varphi^i \leq C_j\ \varphi^i(x_1)=C_j \spc \spc \forall i> j.
\end{align*}
By \cite[Theorem 9.13]{gilbarg1977elliptic} it follows that $\bra{\varphi^i}_{i>j}$ are uniformly bounded in $W^{2,p}(\Omega \cap B_{j-1/2})$ for every $p<+\infty$. Thus, up to a subsequence
\begin{align*}
\varphi^i \overset{i}{\rightharpoonup} \phi_j \spc \spc \inn W^{2,p}(\Omega \cap B_{j-1/2}) \ \ \forall p<+\infty
\end{align*}
and, by \cite[Theorem 7.26]{gilbarg1977elliptic},
\begin{align*}
\varphi^i \overset{i}{\rightarrow} \phi_j \spc \spc \inn C^1(\overline{\Omega}\cap B_{j-1})
\end{align*}
to a nonnegative function $\phi_j$ that solves
\begin{align*}
\system{ll}{\mathcal{L}\phi_j=-\lambda_1^{-\mathcal{L}}(\Omega) \phi_j & \textnormal{a.e.}\ \inn \Omega \cap B_{j-1} \\ \phi_j=0 & \onn \partial \Omega \cap B_{j-1}.}
\end{align*}
By construction, $\phi_j(x_1)=1$ and so $\phi_j$ is positive in $\Omega\cap B_{j-1}$ by the strong maximum principle. Using a diagonal argument, we can extract a subsequence $\bra{\varphi^{i_k}}_{i_k}$ converging to a positive function $\varphi$ that is a solution of the above problem for all $j>1$.
\end{proof}

\subsection{Maximum principle in smooth unbounded domains}

Once that the existence of the couple of (generalized) principal eigenelements in smooth unbounded domains has been proved, we can proceed to show the validity of the maximum principle under the assumption that the generalized principal eigenvalue is positive.

In what follows we consider an operator $\operl$ of the form \eqref{Eq:Operl} and we assume that there exists a function $\eta:\Omega\to \rr$, $\eta \in C^1(\Omega) $ so that
\begin{align*}
\nabla \eta = A^{-1} \cdot B.
\end{align*}
Before proving the main result of this section, we introduce two technical lemmas

\begin{lemma}\label{Lem:LemmaNordman1}
Let $(M,g)$ be a Riemannian manifold and $\Omega\subset M$ a (possibly unbounded) smooth domain. If $v$ satisfies
\begin{align*}
\system{ll}{\mathcal{L}v\geq 0 & \inn \Omega \\ v \leq 0 & \onn \partial \Omega}
\end{align*}
and $(\lambda_1,\varphi)$ are generalized principal eigenelements of $\mathcal{L}$ on $\Omega$ with Dirichlet boundary conditions, defining $\sigma:=\frac{v}{\varphi}$ we get
\begin{align}\label{Eq:1LemmaNordman1}
\dive{\varphi^2 e^\eta A\cdot \nabla\sigma}\geq \lambda_1 e^\eta \sigma \varphi^2 \spc \spc \inn \Omega
\end{align}
and
\begin{align}\label{Eq:2LemmaNordman1}
\sigma_+ \varphi^2 g(\nu, A\cdot\nabla \sigma)=0 \spc \spc \onn \partial \Omega
\end{align}
where $\sigma_+=\max(0,\sigma)$. Since $\varphi=0$ at $\partial \Omega$, condition \eqref{Eq:2LemmaNordman1} must be understood as the limit when approaching the boundary with respect to the direction $A\cdot \nu$, where $\nu$ is the outward pointing unit vector field normal to $\partial \Omega$.
\end{lemma}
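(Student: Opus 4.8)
The plan is to perform a direct computation, treating separately the interior identity \eqref{Eq:1LemmaNordman1} and the boundary identity \eqref{Eq:2LemmaNordman1}. For the interior, I would first observe that since $\varphi>0$ in $\Omega$, the quotient $\sigma=v/\varphi$ is a well-defined $W^{2,p}_{\loc}$ function there, so all the manipulations below make sense (weakly, if necessary). The key algebraic fact is a product-rule identity for the operator $\operm$ applied to a product $\varphi\sigma$. Writing $v=\varphi\sigma$ and using $\operm u=\dive{A\cdot\nabla u}+g(B,\nabla u)$, one expands
\begin{align*}
\operm(\varphi\sigma)=\sigma\,\operm\varphi+\varphi\,\dive{A\cdot\nabla\sigma}+2g(A\cdot\nabla\varphi,\nabla\sigma)+\varphi\,g(B,\nabla\sigma)\cdot\tfrac{?}{}
\end{align*}
— more precisely, $\operm(\varphi\sigma)=\sigma\,\operm\varphi+\varphi\bigl(\dive{A\cdot\nabla\sigma}+g(B,\nabla\sigma)\bigr)+2g(A\cdot\nabla\varphi,\nabla\sigma)$, which one checks by writing out $\dive{A\cdot\nabla(\varphi\sigma)}=\dive{\sigma A\cdot\nabla\varphi+\varphi A\cdot\nabla\sigma}$ and using symmetry of $A$. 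Adding $c\varphi\sigma$ to both sides gives $\operl v=\sigma\,\operl\varphi+\varphi\bigl(\dive{A\cdot\nabla\sigma}+g(B,\nabla\sigma)\bigr)+2g(A\cdot\nabla\varphi,\nabla\sigma)$. Now use $\operl\varphi=-\lambda_1\varphi$ and $\operl v\geq 0$ to obtain the pointwise inequality
\begin{align*}
\varphi\bigl(\dive{A\cdot\nabla\sigma}+g(B,\nabla\sigma)\bigr)+2g(A\cdot\nabla\varphi,\nabla\sigma)\geq \lambda_1\sigma\varphi.
\end{align*}

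The second step is to recognize the left-hand side as a weighted divergence. Using $\nabla\eta=A^{-1}\cdot B$, i.e. $B=A\cdot\nabla\eta$, so that $g(B,\nabla\sigma)=g(A\cdot\nabla\eta,\nabla\sigma)$, and multiplying the displayed inequality by $\varphi e^\eta>0$, I would check that
\begin{align*}
\dive{\varphi^2 e^\eta A\cdot\nabla\sigma}=\varphi^2 e^\eta\,\dive{A\cdot\nabla\sigma}+e^\eta\,g(A\cdot\nabla(\varphi^2),\nabla\sigma)+\varphi^2 e^\eta\,g(A\cdot\nabla\eta,\nabla\sigma),
\end{align*}
and since $\nabla(\varphi^2)=2\varphi\nabla\varphi$, the right-hand side equals $\varphi e^\eta\bigl[\varphi\,\dive{A\cdot\nabla\sigma}+2g(A\cdot\nabla\varphi,\nabla\sigma)+\varphi\,g(B,\nabla\sigma)\bigr]$, which by the inequality above is $\geq\varphi e^\eta\cdot\lambda_1\sigma\varphi=\lambda_1 e^\eta\sigma\varphi^2$. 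This is exactly \eqref{Eq:1LemmaNordman1}. If $v$ and $\varphi$ are only weak solutions, this computation should be carried out in the distributional sense by testing against nonnegative $C^\infty_c(\Omega)$ functions; the elliptic regularity already invoked in Theorem \ref{Thm:BoundedPrincipalEigenfunction} and Theorem \ref{Thm:PrincipalEigenvalueUnbounded} gives enough smoothness for the product rule to be legitimate.

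For the boundary identity \eqref{Eq:2LemmaNordman1}, the point is that $\varphi$ vanishes on $\partial\Omega$ (by Remark \ref{Rmk:PhiVanishesBoundary}) while $v\le 0$ there, so $\sigma_+=\max(0,v/\varphi)$ must be controlled near $\partial\Omega$. I would argue as follows: near a smooth boundary point, Hopf's lemma applied to $\varphi$ (which solves $\operl\varphi=-\lambda_1\varphi$, so $\operm\varphi=-(\lambda_1+c)\varphi$, a sign-definite right-hand side up to lower order) gives $\partial\varphi/\partial(A\cdot\nu)<0$ strictly, i.e. $\varphi$ decays \emph{linearly} in the distance to $\partial\Omega$; meanwhile $v$ is bounded above and $v\le 0$ on $\partial\Omega$. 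Hence $\sigma_+=v_+/\varphi$ stays bounded, and in fact, since $\varphi^2$ vanishes to second order while $g(\nu,A\cdot\nabla\sigma)$ can blow up at most like the reciprocal of the distance (as $\sigma$ itself is bounded and $C^1$ up to the smooth boundary away from the zero set issues), the product $\sigma_+\varphi^2 g(\nu,A\cdot\nabla\sigma)$ tends to $0$ as one approaches $\partial\Omega$ along $A\cdot\nu$. More carefully: write $\varphi^2\nabla\sigma=\varphi\nabla v-v\nabla\varphi$ (times appropriate factors), so $\varphi^2 g(\nu,A\cdot\nabla\sigma)=\varphi\,g(\nu,A\cdot\nabla v)-v\,g(\nu,A\cdot\nabla\varphi)$, both terms being bounded near $\partial\Omega$; multiplying by $\sigma_+$, which vanishes wherever $v\le 0$, kills the expression in the limit on the boundary. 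I would phrase this limiting statement exactly as the remark after \eqref{Eq:2LemmaNordman1} instructs, as a limit along the $A\cdot\nu$ direction.

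The main obstacle I anticipate is the boundary identity rather than the interior one: the interior computation is a mechanical (if slightly lengthy) product-rule manipulation, but making \eqref{Eq:2LemmaNordman1} rigorous requires care about the regularity of $v$ and $\varphi$ up to $\partial\Omega$ and about the precise rate at which $\varphi\to 0$. The cleanest route is to invoke the strong barrier / exterior cone machinery (Remark \ref{Rmk:MillerConeCondition}, Theorem \ref{Thm:FunctionU0}) together with Hopf's lemma to get the two-sided linear control of $\varphi$ near smooth boundary points, and to note that $v_+/\varphi$ therefore extends by $0$ to the smooth part of $\partial\Omega$; then the factor $\varphi^2$ in \eqref{Eq:2LemmaNordman1} is more than enough to force the product to zero. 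A secondary subtlety is that $\sigma_+$ need not be $C^1$ across the set $\{v=0\}$, but since $\sigma_+\nabla\sigma=\nabla(\sigma_+^2)/2$ in the weak sense (Stampacchia), no genuine difficulty arises.
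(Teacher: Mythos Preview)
Your proposal is essentially correct and follows the same route as the paper: for \eqref{Eq:1LemmaNordman1} both arguments are the product-rule computation combining $\operl v\geq 0$, $\operl\varphi=-\lambda_1\varphi$, and the weight $e^\eta$ coming from $B=A\cdot\nabla\eta$, and for \eqref{Eq:2LemmaNordman1} both use the identity $\varphi^2\nabla\sigma=\varphi\nabla v-v\nabla\varphi$ together with Hopf's lemma for $\varphi$.

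The one place where the paper is tighter than your sketch is the boundary argument. Your sentence ``multiplying by $\sigma_+$, which vanishes wherever $v\le 0$, kills the expression'' is not quite the correct reason: at a boundary point $x_0$ with $v(x_0)=0$ but $v(x_{\e_n})>0$ along a sequence $x_{\e_n}\to x_0$, the factor $\sigma_+$ need not tend to $0$ (by L'H\^opital along $A\cdot\nu$ it has a finite, possibly nonzero, limit). The paper handles this by an explicit dichotomy: either $\sigma(x_\e)\leq 0$ for small $\e$ (so $\sigma_+=0$), or $v(x_0)=0$, in which case one rewrites $\sigma_+\varphi^2 g(\nu,A\cdot\nabla\sigma)=v^+\bigl[g(A\cdot\nu,\nabla v)-\sigma\,g(A\cdot\nu,\nabla\varphi)\bigr]$ and uses that $v^+(x_\e)\to 0$ while the bracket stays bounded (this is where Hopf on $\varphi$ enters, to guarantee $\sigma$ has a finite limit). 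Your identity and your orders-of-vanishing heuristic already contain this, but making the case split explicit is what actually closes the argument.
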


\begin{proof}
By the assumptions, it clearly follows
\begin{align*}
\dive{e^\eta A\cdot\nabla v}=e^\eta [\dive{A\cdot\nabla v}+g(B,\nabla v)]
\end{align*}
that, together with the fact that $v$ is a subsolution, implies
\begin{align*}
\dive{e^\eta A\cdot\nabla v}+e^\eta c v =e^\eta \operl v \geq 0. 
\end{align*}
Moreover, since $\varphi$ is a principal eigenfunction, we get
\begin{align*}
\dive{e^\eta A\cdot\nabla\varphi}+c\ e^\eta \varphi=-\lambda_1 e^\eta \varphi,
\end{align*}
that, using previous inequality, implies
\begin{align*}
\dive{\varphi^2 e^\eta A\cdot \nabla \sigma}&\geq \underbrace{e^\eta \left[g(\nabla \varphi, A\cdot\nabla v)-g(\nabla v, A\cdot\nabla \varphi)\right]}_{=0\ \textnormal{by the symmetry of}\ A}+v\lambda_1 e^\eta \varphi
\end{align*}
obtaining \eqref{Eq:1LemmaNordman1}.

Now let $x_0\in \partial \Omega$ and set $x_\e:=\exp_{x_0}(-\e A(x_0)\cdot \nu(x_0))$ for $\e>0$ small enough, where $\nu$ is the outward pointing unit vector field normal to $\partial \Omega$. Recalling that $v\leq 0$ at $\partial \Omega$, we have two possible cases:
\begin{enumerate}
\item \underline{$\sigma(x_\e)\leq 0$ as $\e$ becomes small}: then, $\sigma^+(x_\e)=0$ and thus \eqref{Eq:2LemmaNordman1} trivially holds in the sense of the limit for $x$ approaching the boundary of $\Omega$ along the direction $A(x_0)\cdot \nu(x_0)$. 

\item \underline{$v(x_0)=0$ and $v(x_{\e_n})>0$ for a sequence $\e_n\xrightarrow[]{n}0$}: in this case\\
\begin{center}
$g(A(x_0)\cdot \nu(x_0),\nabla v(x_0))\leq 0$
\end{center}
and, by the standard Hopf's lemma,
\begin{align*}
g(A(x_0) & \cdot\nu(x_0),\nabla\varphi(x_0))\\ &=g(A(x_0)\cdot \nu(x_0), \nu(x_0)) \ g(\nu(x_0), \nabla \varphi(x_0))>0,
\end{align*}
obtaining
\begin{align*}
\lim_{\e \to 0}\sigma(x_\e)=\frac{g(A(x_0)\cdot \nu(x_0),\nabla v(x_0))}{g(A(x_0)\cdot\nu(x_0), \nabla \varphi(x_0))}\leq 0.
\end{align*}
From the definition of $\sigma$ and the fact that $v(x_0)\leq 0$, it follows that
\begin{align*}
\varphi^2 & (x_\e) \sigma^+(x_\e) g\left(\nu(x_0),A(x_0)\cdot\nabla\sigma(x_\e)\right)\\
&=[g\left(A(x_0)\cdot\nu(x_0),\nabla v(x_\e)\right) \\ &\quad \quad-\sigma(x_\e) g\left(A(x_0)\cdot\nu(x_0),\nabla\varphi(x_\e)\right)]\underbrace{v^+(x_\e)}_{\xrightarrow[]{\e\to 0}0} \\
&\xrightarrow[]{\e\to 0}0
\end{align*}
implying the claim.
\end{enumerate}
\end{proof}

Now consider the sequence of cut-off functions $\bra{\rho_k}_k\subset C^\infty_c(M)$ satisfying
\begin{align}\label{Eq:CutOff0}
\system{l}{0\leq \rho_k \leq 1\\ \norm{\nabla \rho_k}{L^\infty(M)}\xrightarrow[]{k}0\\ \rho_k \nearrow 1.}
\end{align}
For a reference, see \cite{pigola2016smooth}. Without loss of generality we can suppose $\bra{\rho_k \neq 0}\cap \partial \Omega \neq \emptyset$ for every $k$.

\begin{lemma}\label{Lem:LemmaNordman2}
Let $(M,g)$ be a Riemannian manifold and $\Omega\subset M$ a (possibly unbounded) smooth domain. Supposing $\lambda_1:=\lambda_1^{-\operl}(\Omega)\geq 0$, we have
\begin{align*}
\lambda_1 \int_\Omega \rho_k^2 e^\eta (v^+)^2 \dvol\leq \int_\Omega g(\nabla\rho_k, A\cdot \nabla \rho_k) e^\eta (v^+)^2\dvol
\end{align*}
for every $k$, where $\{\rho_k\}_k\subset C^\infty_c(M)$ is a sequence of cut-off functions satisfying \eqref{Eq:CutOff0} and so that $\bra{\rho_k \neq 0}\cap \partial \Omega \neq \emptyset$.
\end{lemma}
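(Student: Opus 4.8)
The plan is to test the differential inequality \eqref{Eq:1LemmaNordman1} against the nonnegative function $\rho_k^2\sigma^+$, where $\sigma^+=\max(0,\sigma)=v^+/\varphi$, and to use the boundary identity \eqref{Eq:2LemmaNordman1} to discard the boundary term produced by integrating by parts. Multiplying \eqref{Eq:1LemmaNordman1} by $\rho_k^2\sigma^+\ge 0$, integrating over $\Omega$ and integrating by parts, the boundary contribution is
\begin{align*}
\int_{\partial\Omega}\rho_k^2\,\sigma^+\varphi^2\, e^\eta\, g(\nu, A\cdot\nabla\sigma),
\end{align*}
which vanishes by \eqref{Eq:2LemmaNordman1}, the factors $\rho_k^2$ and $e^\eta$ being bounded on the compact set $\{\rho_k\neq 0\}$. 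Since $\sigma\,\sigma^+=(\sigma^+)^2$ and $(\sigma^+\varphi)^2=(v^+)^2$, the term coming from the right-hand side of \eqref{Eq:1LemmaNordman1} equals $\lambda_1\int_\Omega\rho_k^2 e^\eta(v^+)^2\dvol$, and one is left with
\begin{align*}
\lambda_1\int_\Omega\rho_k^2 e^\eta(v^+)^2\dvol\leq -\int_\Omega\varphi^2 e^\eta\, g\big(\nabla(\rho_k^2\sigma^+), A\cdot\nabla\sigma\big)\dvol.
\end{align*}

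Next I would expand $\nabla(\rho_k^2\sigma^+)=2\rho_k\sigma^+\nabla\rho_k+\rho_k^2\nabla\sigma^+$ and use that the $\sigma^+$-factors vanish on $\{\sigma\le 0\}$ whereas $\nabla\sigma=\nabla\sigma^+$ almost everywhere on $\{\sigma>0\}$; this rewrites the right-hand side as
\begin{align*}
-\int_\Omega\rho_k^2\varphi^2 e^\eta\, g(\nabla\sigma^+, A\cdot\nabla\sigma^+)\dvol-2\int_\Omega\rho_k\sigma^+\varphi^2 e^\eta\, g(\nabla\rho_k, A\cdot\nabla\sigma^+)\dvol.
\end{align*}
The bilinear form $(X,Y)\mapsto g(X, A\cdot Y)$ is symmetric and positive definite, so the Cauchy--Schwarz inequality together with Young's inequality $2ab\le a^2+b^2$, applied with $a=\rho_k\varphi\, g(\nabla\sigma^+, A\cdot\nabla\sigma^+)^{1/2}$ and $b=\sigma^+\varphi\, g(\nabla\rho_k, A\cdot\nabla\rho_k)^{1/2}$, give
\begin{align*}
2\big|\rho_k\sigma^+\varphi^2\, g(\nabla\rho_k, A\cdot\nabla\sigma^+)\big|\leq \rho_k^2\varphi^2\, g(\nabla\sigma^+, A\cdot\nabla\sigma^+)+(\sigma^+)^2\varphi^2\, g(\nabla\rho_k, A\cdot\nabla\rho_k).
\end{align*}
Substituting this into the previous display and cancelling the common term $\int_\Omega\rho_k^2\varphi^2 e^\eta\, g(\nabla\sigma^+, A\cdot\nabla\sigma^+)\dvol$ leaves
\begin{align*}
\lambda_1\int_\Omega\rho_k^2 e^\eta(v^+)^2\dvol\leq\int_\Omega(\sigma^+)^2\varphi^2 e^\eta\, g(\nabla\rho_k, A\cdot\nabla\rho_k)\dvol=\int_\Omega g(\nabla\rho_k, A\cdot\nabla\rho_k)\,e^\eta(v^+)^2\dvol,
\end{align*}
which is the claimed inequality. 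The hypothesis $\lambda_1\ge 0$ is in fact only used to make the statement meaningful: when $\lambda_1<0$ the left-hand side is nonpositive while the right-hand side is nonnegative by positive definiteness of $A$.

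The delicate point --- and the main obstacle --- is justifying the integration by parts, because $\rho_k^2\sigma^+$ has compact support in $M$ but not in $\Omega$: it reaches $\partial\Omega$, where the quotient $\sigma=v/\varphi$ need not be bounded since $\varphi\to 0$. I would handle this exactly as \eqref{Eq:2LemmaNordman1} was obtained in Lemma \ref{Lem:LemmaNordman1}: carry out the computation first on the smooth subdomains $\Omega_\e:=\{x\in\Omega:\ d(x,\partial\Omega)>\e\}$, where all integrands are genuine $L^1$ functions and the divergence theorem applies without difficulty, and then let $\e\downarrow 0$. The boundary integrals appearing on $\partial\Omega_\e$ are of precisely the form controlled by \eqref{Eq:2LemmaNordman1} and tend to $0$, while the interior integrals pass to the limit by monotone (or dominated) convergence on the fixed compact set $\{\rho_k\neq 0\}$; the regularity $v,\varphi\in W^{2,p}\loc(\Omega)$ with $p>n$ guarantees $\sigma^+\in W^{1,2}\loc(\Omega)$, so that all the pointwise gradient manipulations above are valid almost everywhere.
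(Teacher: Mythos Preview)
Your argument is essentially the paper's: test \eqref{Eq:1LemmaNordman1} against $\rho_k^2\sigma^+$, integrate by parts on approximating subdomains, use Cauchy--Schwarz/Young on the cross term, and invoke \eqref{Eq:2LemmaNordman1} to kill the boundary contribution in the limit. The algebraic manipulations (expansion of $\nabla(\rho_k^2\sigma^+)$, the Young inequality, the identification $(\sigma^+\varphi)^2=(v^+)^2$) coincide with the paper's, which packages your Cauchy--Schwarz step as the single pointwise inequality $g(\nabla(\rho_k^2\sigma^+),A\cdot\nabla\sigma)\ge -g(\nabla\rho_k,A\cdot\nabla\rho_k)(\sigma^+)^2$.

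One point deserves more care. You approximate by $\Omega_\e=\{d(\cdot,\partial\Omega)>\e\}$, so points of $\partial\Omega_\e$ approach $\partial\Omega$ along the \emph{normal} direction $\nu$. But \eqref{Eq:2LemmaNordman1} is formulated, and proved in Lemma~\ref{Lem:LemmaNordman1}, as a limit along the direction $A\cdot\nu$; so your boundary integrals on $\partial\Omega_\e$ are not ``of precisely the form controlled by \eqref{Eq:2LemmaNordman1}''. The paper deals with this by building the approximating hypersurfaces as
\[
S_{k,\e}=\bigl\{\exp_x(-\e\,A(x)\cdot\nu(x)):x\in\partial\Omega\cap U_k\bigr\},
\]
proving they are smooth for small $\e$, and then pulling the boundary integral on $S_{k,\e}$ back to $\partial\Omega$ so that the integrand becomes exactly $F(\exp_x(-\e A(x)\cdot\nu(x)))$, to which \eqref{Eq:2LemmaNordman1} applies pointwise. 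Your distance-level-set scheme can be made to work too, since the Hopf-lemma argument behind \eqref{Eq:2LemmaNordman1} goes through for approach along $\nu$ (because $g(\nu,\nabla\varphi)\neq 0$), but this requires an extra line rather than a direct citation of \eqref{Eq:2LemmaNordman1}.
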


\begin{proof}Fix $k\in \mathbb{N}$ and let $U_k\subset\subset M$ be an open domain so that
\begin{itemize}
    \item $\textnormal{supp}(\rho_k)\subset U_k$;
    \item $\Sigma_k:=U_k \cap \partial \Omega$ is smooth (possibly not connected).
\end{itemize}
Let $\nu$ be the outward pointing unit vector field normal to $\partial \Omega$ and, for $\epsilon>0$ small enough, define
\begin{align*}
   S_{k,\epsilon}:=\left\{y\in U_k \cap \Omega \ :\ y=\textnormal{exp}_x\left(-\epsilon A(x)\cdot \nu(x) \right)\ \textnormal{for}\ x\in \partial \Omega \right\}.
\end{align*}

\begin{center}
    \begin{tikzpicture}
    
    \path[draw, smooth, domain=0:2] plot({\x},{sin(30*\x)});
    \path[draw, smooth, domain=6:10] plot({\x},{sin(30*\x)});
    \path[draw, red, smooth, line width=1.5 pt, domain=2:6] plot({\x},{sin(30*\x)});
    \path[draw, dashed, smooth, domain=0:3] plot({\x-1},{sin(30*\x-20)-1});
    \path[draw, dashed, smooth, domain=7:10] plot({\x-1},{sin(30*\x-20)-1});
    \path[draw, red, line width=1.5 pt, smooth, domain=3:7] plot({\x-1},{sin(30*\x-20)-1});
    \foreach \x in {3,4,5,6,7}{\draw[->, blue] ({\x},{sin(30*\x)})--({\x-1},{sin(30*\x-20)-1});}
    \path[draw, line width=1.5 pt] plot [smooth] coordinates{({7-1},{sin(30*7-20)-1}) (6.3,{sin(30*7-20)-0.6}) ({6},{sin(30*6)}) (5,1.5) (3,1.5) (2.5,1.6) ({2},{sin(30*2)}) (1.9,{sin(30*2)-0.5}) };
    \path[draw, line width=1.5 pt] plot [smooth] coordinates{(2,{sin(30*2)}) (1.9,{sin(30*2)-0.5}) (2,{sin(30*3-20)-1}) (2.3, -1) (3.5,-1.5) (4.5, -2) ({7-1},{sin(30*7-20)-1})};
    \path[draw, red] (5.4,0.6) node{$\Sigma_k$};
    \path[draw, red] (5,-1) node{$S_{k,\e}$};
    \path[draw] (3,2) node{$U_k$} (8,-0.5) node{$\partial \Omega$};
    \path[draw] plot [smooth] coordinates{(3.5,1.3) (3,1.3) (2.2,0.5) (2.8,0.1) (2.8,-0.7) (3.2,-0.8) (4, -1.5) (4.8,0) (4.5,1) (3.5,1.3)};
    \path[fill,pattern=north west lines, opacity=0.7] plot [smooth] coordinates{(3.5,1.3) (3,1.3) (2.2,0.5) (2.8,0.1) (2.8,-0.7) (3.2,-0.8) (4, -1.5) (4.8,0) (4.5,1) (3.5,1.3)};
    \path[draw] (4,1)--(4.5,2) (4.5,2) node[anchor=south west] {$\textnormal{supp}(\rho_k)$};
    \path[fill] (4,1) circle(1pt);
    \end{tikzpicture}
\end{center}

Next step consists in proving that there exists $\epsilon_k>0$ so that $S_{k,\epsilon}$ is a (possibly not connected) smooth hypersurface of $\Omega$ for every $0\leq \epsilon \leq \epsilon_k$. To this aim, let $p\in M$ and define $O_p \subset T_p M$ as the set of vectors $X_p$ such that the length $l_{X_p}$ of the geodesic whose initial data is $(p,X_p)$ is greater than 1 . Observe that if $\alpha \in \mathbb{R}_{>0}$, then $l_{\alpha X_p}=\alpha^{-1} l_{X_p}$ and hence
\begin{align*}
    X_p\in O_p \quad \Rightarrow \quad tX_p\in O_p\ \forall t \in (0,1].
\end{align*}
Set $O:=\cup_{p\in M} O_p$ and observe that the exponential map is smooth on $O$ (\cite[Lemma 5.2.3]{MR3469435}). 

Now fix $p\in \partial \Omega$. Since $A(p)$ is nonsingular and linear, the differential of the map $\exp_p \circ A(p):O_p\cap N_p \partial \Omega \to M$ evaluated in $0_p\in O_p$ is nonsingular and it is given by
\begin{align*}
    d_{0_p}(\exp_p \circ A(p))=\underbrace{d_{0_p} \exp_p}_{=Id} \circ\ d_{0_p} A(p)=A(p).
\end{align*}
Retracing the proofs Proposition 5.5.1 and Corollary 5.5.3 in \cite{MR3469435}, we obtain that there exists an open neighbourhood $W$ of the zero section in $N \partial \Omega$ (the normal bundle of $\partial \Omega$) on which $F:=\exp\circ A$ is a diffeomorphism onto its image. In particular, there exists a continuous function $\epsilon:\partial \Omega \to \mathbb{R}_{>0}$ so that
\begin{align*}
    (p,-t\nu(p))\in W \quad \forall t \in [0,\epsilon(p)]
\end{align*}
(see the proof of \cite[Corollary 5.5.2]{MR3469435}). Now consider a neighbourhood $V_k\subset \subset M$ of $U_k$ that intersects $\partial \Omega$ smoothly and so that for
\begin{align*}
    \epsilon_k:= \min_{p\in \overline{V_k}} \epsilon(p)
\end{align*}
we have
\begin{align*}
    Z_{k,\epsilon}:=\left\{(p,-\epsilon \nu(p))\ :\ p\in V_k \cap \partial \Omega\right\}\subset W \quad \forall \epsilon \in [0,\epsilon_k].
\end{align*}
Moreover, up to enlarge $V_k$, we have
\begin{align*}
    S_{k,\epsilon}=\left(\exp\circ A\right)(Z_{k,\epsilon})\cap U_k.
\end{align*}
Since $V_k \cap \partial \Omega$ (and hence $Z_{k,\epsilon}$) is smooth and $\left(exp\circ A\right)\Big|_{Z_{k,\e}}$ is a diffeomorphism onto its image, it follows that $S_{k,\epsilon}=\left(\exp\circ A\right)(Z_{k,\epsilon})\cap U_k$ is a smooth (possibly not connected) hypersurface for every $\epsilon \in [0,\epsilon_k]$.

Now define
\begin{align*}
    \Omega_{k,\e}:=[\Omega \cap U_k] \setminus \bigcup_{0<t<\e} S_{\e,k}
\end{align*}
and, up to decrease $\e_k$, suppose
\begin{align*}
    \Omega_{k,\e}\neq \emptyset \quad \forall \e \in [0,\e_k].
\end{align*}
By construction
\begin{align*}
    \bigcup_{0<\e<\e_k} \Omega_{\e,k}=\Omega \cap U_k.
\end{align*}

\begin{center}
    \begin{tikzpicture}
    \path[fill, opacity=0.3, domain=3:7, smooth, variable=\x] plot({\x-1},{sin(30*\x-20)-1}) --  plot [smooth] coordinates{ ({7-1},{sin(30*7-20)-1}) (4.5, -2) (3.5,-1.5)  (2.3, -1) (2,{sin(30*3-20)-1})};
    \path[draw, smooth, domain=0:2] plot({\x},{sin(30*\x)});
    \path[draw, smooth, domain=6:10] plot({\x},{sin(30*\x)});
    \path[draw, red, smooth, line width=1.5 pt, domain=2:6] plot({\x},{sin(30*\x)});
    \path[draw, smooth, opacity=0.5, domain=0:3] plot({\x-1},{sin(30*\x-20)-1});
    \path[draw, smooth, opacity=0.5, domain=7:10] plot({\x-1},{sin(30*\x-20)-1});
    
    \path[draw, red, line width=1.5 pt, smooth, domain=3:7] plot({\x-1},{sin(30*\x-20)-1});
    \foreach \x in {3,4,5,6,7}{\draw[->, blue] ({\x},{sin(30*\x)})--({\x-1},{sin(30*\x-20)-1});}
    \foreach \y in {0.1,0.2,0.3,0.4,0.5,0.6,0.7,0.8,0.9}{
        \path[draw, smooth, domain=0:10, opacity=0.5] plot({\x-1*\y},{sin(30*\x-20*\y)-1*\y});
    }
    \path[draw, line width=1.5 pt] plot [smooth] coordinates{({7-1},{sin(30*7-20)-1}) (6.3,{sin(30*7-20)-0.6}) ({6},{sin(30*6)}) (5,1.5) (3,1.5) (2.5,1.6) ({2},{sin(30*2)}) (1.9,{sin(30*2)-0.5}) };
    \path[draw, line width=1.5 pt] plot [smooth] coordinates{(2,{sin(30*2)}) (1.9,{sin(30*2)-0.5}) (2,{sin(30*3-20)-1}) (2.3, -1) (3.5,-1.5) (4.5, -2) ({7-1},{sin(30*7-20)-1})};
    \path[draw, red] (5.4,0.6) node{$\Sigma_k$};
    \path[draw, red] (6,-1.6) node{$S_{k,\e}$};
    \path[draw] (3,2) node{$U_k$} (8,-0.5) node{$\partial \Omega$};
    \path[draw] (2,-1.5)--(2.8,-1);
    \path[fill] (2.8,-1) circle(1pt);
    \path[draw] (2.2,-1.5) node[anchor=north east]{$\Omega_{k,\e}$};
    \path[draw, line width=1.5 pt] plot [smooth] coordinates{(1.5,0) (1,-2) (1.5,-2.5) (3,-3) (4,-2.5) (5,-2.5) (6,-2) (7,-1) (7,1.5) (6,2) (5,2) (4,2.5) (3,3) (2,2.5) (1.5,0)};
    \path[draw] (4,3) node{$V_k$};
    \end{tikzpicture}
\end{center}
Multiplying (3.6) by $\sigma^+ \rho_k^2$ and integrating over $\Omega_{\e,k}$, by the divergence theorem we get
\begin{align*}
\int_{\partial \Omega_{\e,k}} \sigma^+ & \rho_k^2 e^\eta \varphi^2 g(\nu,A\cdot\nabla \sigma) - \int_{\Omega_{\e,k}} g\left(\nabla\left( \sigma^+\rho_k^2\right),A\cdot\nabla \sigma\right) e^\eta \varphi^2 \\
&\geq \lambda_1 \int_{\Omega_{\e,k}} e^\eta \varphi^2 (\sigma^+)^2 \rho_k^2.
\end{align*}
Observe that
\begin{align*}
\int_{\partial \Omega_{\e,k}} \sigma^+ \rho_k^2 e^\eta \varphi^2 g(\nu,A\cdot\nabla \sigma)=\int_{S_{\e,k}\cap \textnormal{supp}(\rho_k)} \sigma^+ \rho_k^2 e^\eta \varphi^2 g(\nu,A\cdot\nabla \sigma)
\end{align*}
since $\rho_k\equiv 0$ on $\partial \Omega_{\e,k}\setminus \left(S_{\e,k}\cap \textnormal{supp}(\rho_k)\right)$. Moreover,
\begin{align*}
g\left(\nabla\left(\rho_k^2 \sigma^+\right),A\cdot\nabla \sigma\right) \geq - g\left(\nabla\rho_k,A\cdot \nabla\rho_k \right) (\sigma^+)^2,
\end{align*}
obtaining
\begin{align}\label{Eq:Inequality0}
\int_{\partial \Omega_{\e,k}} \sigma^+ & \rho_k^2 e^\eta \varphi^2 g(\nu,A\cdot\nabla \sigma)+ \int_{\Omega_{\e,k}} g\left(\nabla\rho_k,A\cdot \nabla\rho_k\right) (\sigma^+)^2 e^\eta \varphi^2 \\
& \geq \lambda_1 \int_{\Omega_{\e,k}} e^\eta \varphi^2 (\sigma^+)^2 \rho_k^2.
\end{align}
The next step is to study the behaviour of previous integrals as $\e\to 0$. Since
\begin{align*}
0\leq \lambda_1 e^\eta \varphi^2 (\sigma^+)^2 \rho_k^2 \chi_{\Omega_{\e,k}}\leq \lambda_1 e^\eta \varphi^2 (\sigma^+)^2 \rho_k^2
\end{align*}
and
\begin{align*}
\lambda_1 e^\eta \varphi^2 (\sigma^+)^2 \rho_k^2 \chi_{\Omega_{\e,k}}\to \lambda_1 e^\eta \varphi^2 (\sigma^+)^2 \rho_k^2 \quad \textnormal{a.e. in $\Omega$ as} \ \e \to 0,
\end{align*}
by dominated convergence theorem we get
\begin{align}\label{Eq:Limit1}
\lambda_1 \int_{\Omega_{\e,k}} e^\eta \varphi^2 (\sigma^+)^2 \rho_k^2=\lambda_1 \int_{\Omega} e^\eta \varphi^2 (\sigma^+)^2 \rho_k^2 \chi_{\Omega_{\e,k}} \xrightarrow[]{\e \to 0}\lambda_1 \int_{\Omega} e^\eta \varphi^2 (\sigma^+)^2 \rho_k^2.
\end{align}
Similarly, using the fact that $A$ is positive definite, we obtain
\begin{align}\label{Eq:Limit2}
 \int_{\Omega_{\e,k}} g\left(\nabla\rho_k,A\cdot \nabla\rho_k\right) (\sigma^+)^2 e^\eta \varphi^2 \xrightarrow[]{\e \to 0}  \int_{\Omega} g\left(\nabla\rho_k,A\cdot \nabla\rho_k\right) (\sigma^+)^2 e^\eta \varphi^2.
\end{align}
Lastly, for $F:=\sigma^+ \rho_k^2 e^\eta \varphi^2 g(\nu,A\cdot\nabla \sigma)$ we have
\begin{align*}
\int_{\partial \Omega_{\e,k}} F(y)=\int_{S_{k,\e}} F(y)=\int_{\partial \Omega} F\left(\textnormal{exp}_x(-\e A(x)\cdot \nu(x))\right)
\end{align*}
and for every $x\in \partial\Omega$
\begin{align*}
F\left(\textnormal{exp}_x(-\e A(x)\cdot \nu(x))\right)\xrightarrow[]{\e \to 0} 0
\end{align*}
by \eqref{Eq:2LemmaNordman1}.
Using the dominated convergence theorem, we get
\begin{align}\label{Eq:Limit3}
\int_{\partial \Omega_{\e,k}} \sigma^+ \rho_k^2 e^\eta \varphi^2 g(\nu,A\cdot\nabla \sigma)=\int_{\partial \Omega_{\e,k}} F(y)\xrightarrow[]{\e \to 0} 0.
\end{align}
Letting $\e\to 0$ in \eqref{Eq:Inequality0} and using \eqref{Eq:Limit1}, \eqref{Eq:Limit2} and \eqref{Eq:Limit3}, it follows that
\begin{align*}
\int_{\Omega} g\left(\nabla\rho_k,A\cdot \nabla\rho_k\right) (\sigma^+)^2 e^\eta \varphi^2 \geq \lambda_1 \int_{\Omega} e^\eta \varphi^2 (\sigma^+)^2 \rho_k^2,
\end{align*}
obtaining the claim, since $\sigma^+\varphi=v^+$.
\end{proof}

We are finally ready to prove the main theorem of this section.

\begin{theorem}[Unbounded Maximum Principle]\label{Thm:UnboundedMaximumPrinciple}
Let $(M,g)$ be a complete Riemannian manifold and $\Omega\subset M$ a (possibly unbounded) smooth domain. If $\lambda_1^{-\operl}(\Omega)>0$, then every function $u\in C^{2}(\overline{\Omega})$ that satisfies
\begin{align*}
\system{ll}{\operl u \geq 0 & \inn \Omega \\ u\leq 0 & \onn \partial \Omega \\ \sup_{\Omega} u <+\infty}
\end{align*}
is nonpositive.
\end{theorem}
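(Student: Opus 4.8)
The plan is to follow Nordmann's scheme, playing $u$ off against a generalized principal eigenfunction. Let $(\lambda_1,\varphi)$ be the generalized principal Dirichlet eigenelements of $\operl$ on $\Omega$ furnished by Theorem~\ref{Thm:PrincipalEigenvalueUnbounded}, so that $\operl\varphi=-\lambda_1\varphi$ and $\varphi>0$ in $\Omega$, $\varphi$ vanishes on $\partial\Omega$, and $\lambda_1=\lambda_1^{-\operl}(\Omega)>0$ by hypothesis. Set $v:=u$; since $\operl v\geq 0$ in $\Omega$ and $v\leq 0$ on $\partial\Omega$, Lemma~\ref{Lem:LemmaNordman1} applies to the pair $(v,\varphi)$, so that with $\sigma:=v/\varphi$ we obtain
\[
\dive{\varphi^2 e^\eta A\cdot\nabla\sigma}\ \geq\ \lambda_1 e^\eta\sigma\varphi^2\quad\inn\Omega,\qquad \sigma_+\varphi^2\, g(\nu,A\cdot\nabla\sigma)=0\quad\onn\partial\Omega,
\]
the boundary identity understood in the limiting sense along $A\cdot\nu$. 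These are precisely the inputs of Lemma~\ref{Lem:LemmaNordman2} which, since $\lambda_1\geq 0$, yields for the cut-off functions $\{\rho_k\}\subset C^\infty_c(M)$ of \eqref{Eq:CutOff0}
\[
\lambda_1\int_\Omega\rho_k^2\, e^\eta\,(v^+)^2\dvol\ \leq\ \int_\Omega g(\nabla\rho_k,A\cdot\nabla\rho_k)\, e^\eta\,(v^+)^2\dvol\qquad\text{for every }k,
\]
both sides being finite because $\rho_k$ is compactly supported.

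The second step is to let $k\to\infty$. On the left, $\rho_k\nearrow1$, so by monotone convergence $\lambda_1\int_\Omega\rho_k^2 e^\eta(v^+)^2\dvol\uparrow\lambda_1\int_\Omega e^\eta(v^+)^2\dvol$. The right-hand side is where the hypothesis $\sup_\Omega u<+\infty$ is used: writing $S:=\sup_\Omega u$ we have $0\leq v^+\leq S$, while uniform ellipticity gives $g(\nabla\rho_k,A\cdot\nabla\rho_k)\leq C_0|\nabla\rho_k|^2\leq C_0\|\nabla\rho_k\|_{L^\infty(M)}^2$; hence the right-hand side is bounded by $C_0 S^2\,\|\nabla\rho_k\|_{L^\infty(M)}^2$ times the $e^\eta$-weighted measure of $\{\nabla\rho_k\neq0\}$. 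Choosing the $\rho_k$ (as in \cite{pigola2016smooth}) with $\|\nabla\rho_k\|_{L^\infty(M)}\to0$ and supports escaping to infinity, one shows that this right-hand side tends to $0$ — equivalently, once $e^\eta(v^+)^2$ is seen to lie in $L^1(\Omega)$ the conclusion follows from dominated convergence, since $g(\nabla\rho_k,A\cdot\nabla\rho_k)\, e^\eta(v^+)^2\leq C_0\, e^\eta(v^+)^2$ and $\nabla\rho_k\to0$ pointwise. Passing to the limit we get $\lambda_1\int_\Omega e^\eta(v^+)^2\dvol\leq0$, and since $\lambda_1>0$ and $e^\eta>0$ this forces $v^+\equiv0$, that is $u\leq0$ in $\Omega$.

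I expect the delicate point to be exactly this passage to the limit on the right-hand side: it is where the boundedness of $u$ must be combined correctly with the properties \eqref{Eq:CutOff0} of the cut-off sequence, and it requires care because $e^\eta$ is only Lipschitz (hence a priori of exponential growth) and the ambient volume need not be controlled at infinity. Once the right-hand side is shown to vanish in the limit, the statement is immediate from $\lambda_1>0$; the remaining work is the bookkeeping assembly of Lemmas~\ref{Lem:LemmaNordman1} and \ref{Lem:LemmaNordman2}, which package, respectively, the divergence-form reformulation through $\varphi$ and the integration by parts against $\rho_k^2\sigma^+$.
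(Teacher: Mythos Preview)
Your architecture is exactly the paper's: invoke Lemma~\ref{Lem:LemmaNordman1} and Lemma~\ref{Lem:LemmaNordman2}, then let $k\to\infty$ in the cut-off inequality. The gap is precisely where you flag it. Passing to the limit on each side separately requires either $e^\eta(v^+)^2\in L^1(\Omega)$ (for dominated convergence on the right) or control of the $e^\eta$-weighted measure of $\{\nabla\rho_k\neq0\}$ against $\|\nabla\rho_k\|_{L^\infty}^2$; neither is available, since $\eta$ is merely Lipschitz and $\Omega$ may have infinite volume. Your two candidate arguments both hinge on this unproved integrability, so the proof as written does not close.

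The paper circumvents this by not taking separate limits. Assuming for contradiction that $u^+\not\equiv0$, one divides the inequality of Lemma~\ref{Lem:LemmaNordman2} by $\int_\Omega\rho_k^2 e^\eta(u^+)^2>0$ to obtain
\[
\lambda_1\ \leq\ \frac{\int_\Omega g(\nabla\rho_k,A\cdot\nabla\rho_k)\,e^\eta(u^+)^2}{\int_\Omega\rho_k^2\,e^\eta(u^+)^2}\ \leq\ C_0\int_\Omega\frac{\|\nabla\rho_k\|_{L^\infty}^2\,w^2}{\int_\Omega\rho_k^2 w^2},
\]
with $w:=e^{\eta/2}u^+$. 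Along a subsequence with $\|\nabla\rho_k\|_{L^\infty}\searrow0$, the integrand on the right is a pointwise nonincreasing sequence of nonnegative functions tending to $0$ (the numerator decreases, the denominator increases), so by monotone convergence the right-hand side tends to $0$, contradicting $\lambda_1>0$. The point is that the ratio structure makes the denominator absorb the possible growth of $\int w^2$, so no a priori $L^1$ bound on $e^\eta(u^+)^2$ is needed; this is the missing idea in your limiting step.
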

\begin{proof}
Let $u$ be a $\operl$-subsolution with $u\leq 0$ at $\partial \Omega$ and suppose by contradiction that $u^+\not\equiv 0$. By Lemma \ref{Lem:LemmaNordman2}
\begin{align*}
\lambda_1\leq \frac{\int_\Omega g\left(\nabla \rho_k,A\cdot \nabla \rho_k\right) e^\eta (u^+)^2}{\int_\Omega \rho_k^2 e^\eta (u^+)^2}.
\end{align*}
Now consider the bounded function $w=e^{\eta/2}u^+$. We get
\begin{align*}
\frac{g(\nabla \rho_k,A\cdot \nabla \rho_k) w^2}{\int_{\Omega} \rho_k^2 w^2} & \leq C_0 \frac{g(\nabla \rho_k,\nabla \rho_k) w^2}{\int_\Omega \rho_k^2 w^2} \leq C_0 \frac{\norm{\nabla \rho_k}{L^\infty(M)}^2 w^2}{\int_{\Omega} \rho_k^2 w^2}
\end{align*}
Since $\norm{\nabla \rho_k}{L^\infty(M)}\xrightarrow{k}0$, up to extract a subsequence we can suppose $\norm{\nabla \rho_k}{L^\infty(M)}\searrow 0$, obtaining that the sequence
\begin{align*}
\left\{\frac{\norm{\nabla \rho_k}{L^\infty(M)}^2 w^2}{\int_{\Omega} \rho_k^2 w^2}\right\}_k
\end{align*}
is nonincreasing and converges to 0 almost everywhere. By the monotone convergence theorem, we get
\begin{align*}
\lambda_1 \leq \int_\Omega \frac{ g(\nabla \rho_k,A\cdot \nabla \rho_k) e^\eta (v^+)^2}{\int_\Omega \rho_k^2 e^\eta (v^+)^2} \leq C_0 \int_{\Omega} \frac{\norm{\nabla \rho_k}{L^\infty(M)}^2 w^2}{\int_{\Omega} \rho_k^2 w^2}\xrightarrow{k\to +\infty}0
\end{align*}
obtaining a contradiction.
\end{proof}

\section{Some applications of the maximum principle in unbounded domains}
Now we are going to apply Theorem \ref{Thm:UnboundedMaximumPrinciple} to generalize the symmetry results contained in \cite{bisterzo2022symmetry}.

\subsection{Strongly stable solutions in homogeneous domains} 
To start, consider a complete Riemannian manifold $(M,g)$. We recall that an \textit{isoparametric domain} $\Omega\subseteq M$ is a domain endowed by a singular Riemannian foliation $\overline{\Omega}=\bigcup_t \Sigma_t$ whose regular leaves are connected parallel hypersurfaces with constant mean curvature $H^{\Sigma_t}$. Now let $\Psi:M\to \rr$ be a smooth function and consider the weighted Riemannian manifold $M_\Psi:=(M,g,\textnormal{dv}_\Psi)=(M,g,e^\Psi \textnormal{dv})$. We say that $\Omega\subseteq M_\Psi$ is a $\Psi$-\textit{isoparametric domain} if $\overline{\Omega}$ is foliated by parallel hypersurfaces $\Sigma_t$ of constant weighted mean curvature, i.e. so that
\begin{align*}
H_\Psi^{\Sigma_t}=H^{\Sigma_t}-g(\nabla \Psi, \vec{\nu})\equiv const.
\end{align*}
where $\vec{\nu}$ is the unit vector field normal to $\Sigma_t$. Lastly, we say that $\Omega\subseteq M$ is an \textit{homogeneous domain} if $\Omega$ is an isoparametric domain whose regular leaves are orbits of the action of a closed subgroup of $\textnormal{Iso}_0(M)$, the identity component of $\textnormal{Iso}(M)$.

\begin{definition}[$\Psi$-homogeneous domain]
Given a weighted Riemannian manifold $M_\Psi$, we say that $\Omega\subseteq M_\Psi$ is a $\Psi$-\textnormal{homogeneous domain} if it is a $\Psi$-isoparametric domain and a homogeneous domain simultaneously. 
\end{definition}

For further details about isoparametric and homogeneous domains, see \cite{bisterzo2022symmetry}. We only recall that
\begin{itemize}
\item given an homogeneous domain $\overline{\Omega}$ of a complete Riemannian manifold $M$, there always exists a (finitely generated) integral distribution $\bra{X_1,...,X_k}$ of Killing vector fields of $M$ spanning pointwise every tangent space to al leaves $\Sigma_t$ of the foliation of $\overline{\Omega}$;

\item if $\overline{\Omega}$ is homogeneous and $\Psi:M\to \rr_{>0}$ is a symmetric (at least on $\overline{\Omega}$) smooth weight, then the symmetry of $\Psi$ turns $\overline{\Omega}$ into e $\Psi$-homogeneous domain.
\end{itemize}

Before proceeding with the first symmetry result of this section, we recall that on the weighted manifold $M_{\Psi}$ we have a natural counterpart to the standard Laplacian. It is the {\it weighted Laplacian}, also called {\it $\Psi$-Laplacian}, which is defined by the formula
\[
\Delta_{\Psi} u = e^{\Psi} \textnormal{div}(e^{-\Psi} \nabla u) = \Delta u - g(\nabla \Psi,\nabla u).
\]

\noindent We also recall that

\begin{definition}
The function $u\in C^3(\Omega)\cap C^1(\overline{\Omega})$ is said to be a \textnormal{stable} (respectively \textnormal{strongly stable}) \textnormal{solution} to \eqref{Eq:SystemSymmetry} if
\begin{align*}
\lambda_1^{-\Delta_\Psi + f'(u)}(\Omega):= \underset{\varphi \not \equiv 0}{\inf_{\varphi \in C^\infty_c(\Omega),}} \frac{\int_\Omega \left(|\nabla \varphi|^2+f'(u) \varphi^2 \right) \textnormal{dv}_\Psi}{\int_\Omega \varphi^2 \textnormal{dv}_\Psi}\geq 0 \spc (\textnormal{resp.}\ >0).
\end{align*}
\end{definition}

\begin{definition}
If $\overline{\Omega}$ is a weighted $\Psi$-homogeneous domain with soul $P$ inside the weighted manifold $M_\Psi$, let $d:M\to \rr_{\geq 0}$ as $x\mapsto \textnormal{dist}(x,P)$. A function $u$ on $\overline{\Omega}$ is said to be
\begin{itemize}
\item \textnormal{symmetric} if there exists a function $\widehat{u}:\rr \to \rr$ so that
\begin{align*}
u(x)=\widehat{u}(d(x));
\end{align*}
\item \textnormal{locally symmetric} if $u\in C^1(\overline{\Omega})$ and $X(u)\equiv 0$ for any smooth vector field $X\in \mathcal{D}$.
\end{itemize}
\end{definition}
\noindent By \cite[Lemma 3.7]{bisterzo2022symmetry}, these notions of symmetry coincide in our $\Psi$-homogeneous setting.
\bigskip

The first theorem, stated below, provides an adaptation of the symmetry result \cite[Theorem 5.1]{bisterzo2022symmetry} to (possibly) noncompact $\Psi$-homogeneous domains. To achieve this goal we make use of Theorem \ref{Thm:UnboundedMaximumPrinciple} in order to replace the nodal domain theorem used by the author and S. Pigola in \cite{bisterzo2022symmetry}. However, this leads to more restrictive assumptions on the solution, namely that it has to be \textit{strongly stable}.

\begin{theorem}\label{Thm:UnboundedSymmetryResult}
Let $\overline{\Omega}$ be a (possibly noncompact) $\Psi$-homogeneous domain with soul $P$ inside the weighted manifold $M_{\Psi}$. Moreover, assume that $\Psi$ is symmetric (at least on $\overline{\Omega}$) and denote with $\mathcal{D}=\{X_1,...,X_k\}$ the integrable distribution of Killing vector fields associated to the foliation of $\overline{\Omega}$.

Then, every strongly stable solution $u\in C^3(\Omega)\cap C^1(\overline{\Omega})\cap W^{1,1}(\Omega)$ to
\begin{align}\label{Eq:SystemSymmetry}
\system{ll}{\Delta_\Psi u=f(u) & in\ \Omega \\ u=c_j & on\ (\partial \Omega)_j}
\end{align}
 so that
 \begin{align*}
 \sup_\Omega X_\alpha (u)<+\infty\quad and\quad u|X_\alpha|\in L^1(\Omega,\textnormal{dv}_\Psi)\quad for\ every\ \alpha \in A 
 \end{align*}
is symmetric.
\end{theorem}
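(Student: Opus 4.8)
The plan is to linearize the equation along the Killing fields of $\mathcal{D}$ and to combine the unbounded maximum principle, Theorem \ref{Thm:UnboundedMaximumPrinciple}, with an integration-by-parts identity, in order to force $u$ to be constant along the leaves of the foliation of $\overline{\Omega}$, i.e.\ locally symmetric; the symmetry of $u$ then follows at once from \cite[Lemma 3.7]{bisterzo2022symmetry}.

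First, I would fix $\alpha\in A$ and set $v:=X_\alpha(u)$. Since $X_\alpha$ is a Killing field its local flow consists of isometries, so $X_\alpha$ commutes with $\Delta$, and the symmetry of $\Psi$ gives $X_\alpha(\Psi)=0$; hence $X_\alpha$ commutes with the weighted Laplacian $\Delta_\Psi u=\Delta u-g(\nabla\Psi,\nabla u)$. Differentiating $\Delta_\Psi u=f(u)$ along $X_\alpha$ then yields
\begin{align*}
\Delta_\Psi v=X_\alpha(f(u))=f'(u)\,v\qquad\inn\Omega,
\end{align*}
so $v$ solves $\mathcal{L}v=0$ in $\Omega$ with $\mathcal{L}:=\Delta_\Psi-f'(u)$, an operator of the form \eqref{Eq:Operl} having $A=\mathrm{Id}$, $B=-\nabla\Psi$ and zero-order term $-f'(u)$. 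Since the $X_\alpha$ are tangent to the leaves of the foliation of $\overline{\Omega}$, in particular to $\partial\Omega$, and $u\equiv c_j$ on each component $(\partial\Omega)_j$ with $u\in C^1(\overline{\Omega})$, we get $v=0$ on $\partial\Omega$; moreover $\sup_\Omega v=\sup_\Omega X_\alpha(u)<+\infty$ by hypothesis. Now $A^{-1}\cdot B=\nabla(-\Psi)$ is a gradient, so $\mathcal{L}$ is symmetric in the weighted $L^2$ space relevant to this setting, and its generalized principal eigenvalue admits the variational characterization through the Rayleigh quotient recalled in Section \ref{Sec:ABP and Maximum Principle}; strong stability of $u$ is exactly the statement that this quotient is positive, i.e.\ $\lambda_1^{-\mathcal{L}}(\Omega)=\lambda_1^{-\Delta_\Psi+f'(u)}(\Omega)>0$. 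Applying Theorem \ref{Thm:UnboundedMaximumPrinciple} to the subsolution $v$ then gives $X_\alpha(u)=v\le 0$ in $\Omega$.

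To promote this one-sided bound to $X_\alpha(u)\equiv 0$, I would use that a Killing field is divergence free and that $X_\alpha(\Psi)=0$, so that $\textnormal{div}_\Psi X_\alpha=\textnormal{div}\,X_\alpha+g(\nabla\Psi,X_\alpha)=0$ and hence $X_\alpha(u)=\textnormal{div}_\Psi(u\,X_\alpha)$ on $\Omega$. Testing this against the cut-off functions $\{\rho_k\}_k\subset C^\infty_c(M)$ of \eqref{Eq:CutOff0} and using the tangency of $X_\alpha$ to $\partial\Omega$ (which kills the boundary term in the divergence theorem), one obtains
\begin{align*}
\int_\Omega\rho_k\,X_\alpha(u)\,\textnormal{dv}_\Psi=-\int_\Omega g(\nabla\rho_k,\,u\,X_\alpha)\,\textnormal{dv}_\Psi,
\end{align*}
whose right-hand side is bounded in modulus by $\norm{\nabla\rho_k}{L^\infty(M)}\,\norm{u|X_\alpha|}{L^1(\Omega,\textnormal{dv}_\Psi)}$, which tends to $0$ as $k\to\infty$ by \eqref{Eq:CutOff0}; it is precisely here that the hypotheses $u|X_\alpha|\in L^1(\Omega,\textnormal{dv}_\Psi)$ and $u\in W^{1,1}(\Omega)$ are used, the latter to legitimize the integration by parts on the unbounded domain. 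Since $-X_\alpha(u)\ge 0$ and $\rho_k\nearrow 1$, the monotone convergence theorem forces $\int_\Omega(-X_\alpha(u))\,\textnormal{dv}_\Psi=0$, whence $X_\alpha(u)\equiv 0$ in $\Omega$ by continuity. As $\alpha\in A$ was arbitrary, $X(u)\equiv 0$ for every $X\in\mathcal{D}$, i.e.\ $u$ is locally symmetric, and \cite[Lemma 3.7]{bisterzo2022symmetry} finishes the argument.

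I expect the real difficulty to lie not in any single estimate but in the interface between the two notions of first eigenvalue: one has to check carefully that the strong-stability quantity $\lambda_1^{-\Delta_\Psi+f'(u)}(\Omega)$, defined by a Rayleigh quotient over $C^\infty_c(\Omega)$ with respect to $\textnormal{dv}_\Psi$, really coincides with the generalized principal eigenvalue entering Theorem \ref{Thm:UnboundedMaximumPrinciple}; this rests on the symmetry of $\mathcal{L}=\Delta_\Psi-f'(u)$ and on the coefficient bounds required there, so that $\nabla\Psi$ and $f'(u)$ must be suitably controlled along $\Omega$. A second, milder, point is that $v=X_\alpha(u)$ a priori lies only in $C^2(\Omega)\cap C^0(\overline{\Omega})$, slightly below the $C^2(\overline{\Omega})$ regularity appearing in the statement of Theorem \ref{Thm:UnboundedMaximumPrinciple}; this is harmless, since only the Hopf-type step near $\partial\Omega$ in Lemma \ref{Lem:LemmaNordman1} touches the boundary, but it should be either verified directly or bypassed by a routine approximation relying on $u\in C^3(\Omega)\cap C^1(\overline{\Omega})$. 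The remaining ingredients (commutation with $\Delta_\Psi$, tangency of the $X_\alpha$ to $\partial\Omega$, divergence-freeness of Killing fields) are standard.
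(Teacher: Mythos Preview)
Your proposal is correct and follows essentially the same route as the paper: linearize along a Killing field $X_\alpha$ to get $(\Delta_\Psi-f'(u))v=0$ with $v=0$ on $\partial\Omega$, apply Theorem \ref{Thm:UnboundedMaximumPrinciple} to obtain $v\le 0$, and then use a divergence identity for $Z=uX_\alpha$ to force $\int_\Omega v\,\textnormal{dv}_\Psi=0$ and hence $v\equiv 0$. The only cosmetic differences are that the paper cites \cite[Lemma 5.4]{bisterzo2022symmetry} for the commutation step and invokes the Gaffney--Stokes theorem \cite[Theorem 4.8]{bisterzo2022symmetry} directly for the integral identity, whereas you spell out the latter via the cut-offs $\{\rho_k\}$ of \eqref{Eq:CutOff0}; note also that with the paper's sign convention $\textnormal{div}_\Psi X=\textnormal{div}\,X-g(\nabla\Psi,X)$, though this does not affect your argument since both summands vanish.
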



\begin{remark}
In \cite[Theorem 5.1]{bisterzo2022symmetry} the authors proved a symmetry result for (regular enough) stable solutions to
\begin{align*}
\system{ll}{\Delta_\Psi u=f(u) & \inn \Omega \\ u=c_j & \onn (\partial \Omega)_j}
\end{align*}
in case $\overline{\Omega}$ is a compact $\Psi$-homogeneous domain with associated Killing distribution $\{X_\alpha\}_{\alpha \in A}$ and the weight $\Psi$ satisfies the compatibility condition
\begin{align*}
g(X_\alpha,\Psi)\equiv const. \quad \onn \Omega \quad \forall \alpha \in A.
\end{align*}
In fact, the preceding compatibility condition implies that the weight has to be symmetric (at least on $\Omega$). Indeed, if $X_\alpha \in \mathcal{D}$, denoting $C_\alpha:=g(X_\alpha,\nabla \Psi)$ we have
\begin{align*}
\int_\Omega \textnormal{div}(\Psi X_\alpha) \dvol &= \int_\Omega \underbrace{g(X_\alpha,\nabla \Psi)}_{C_\alpha} \dvol+ \int_\Omega \Psi \underbrace{\dive{X_\alpha}}_{=0} \dvol\\
&= C_\alpha |\Omega|,
\end{align*}
while, by the divergence theorem,
\begin{align*}
\int_\Omega \textnormal{div}(\Psi X_\alpha) \dvol = \int_{\partial \Omega} \Psi\ \underbrace{g(X_\alpha, \nu)}_{=0}\dvol=0,
\end{align*}
where $\nu$ denotes the unit vector field normal to $\partial \Omega$. Putting together previous equality, we obtain $C_\alpha=0$ for every $\alpha\in A$. By previous remark, this exactly means that $\Psi$ is symmetric.
\end{remark}

\begin{proof}[Proof of Theorem \ref{Thm:UnboundedSymmetryResult}]
Let $X=X_j\in \mathcal{D}$ and define
\begin{align*}
v:=X(u).
\end{align*}
Since $u$ is locally constant on $\partial \Omega$ and $X|_{\partial\Omega}$ is tangential to $\partial \Omega$, we have
\begin{align*}
v=0 \spc \onn \partial \Omega.
\end{align*}
By \cite[Lemma 5.4]{bisterzo2022symmetry}
\begin{align*}
\Delta_\Psi v=f'(u) v
\end{align*}
implying that $v\in C^2(\Omega)$ is a solution of
\begin{align*}
\system{ll}{\left(\Delta_\Psi-f'(u)\right) v=0 & \inn \Omega\\ v=0 & \onn \partial \Omega\\ \sup_\Omega v<+\infty.}
\end{align*}
and, since $\lambda_1^{-\Delta_\Psi+f'(u)}(\Omega)>0$, by Theorem \ref{Thm:UnboundedMaximumPrinciple} 
\begin{align}\label{Eq:VLeq0}
v\leq 0 \spc \inn \Omega.
\end{align}
Let $Z:=uX$: since $X$ is Killing, it follows that $\textnormal{div} X=0$ implying
\begin{align*}
	\textnormal{div}_\Psi Z =e^\Psi \textnormal{div}(e^{-\Psi}Z)=v-\underbrace{g(\nabla \Psi,X)}_{=0}u=v \spc \in L^1(M,\textnormal{dv}_\Psi)
\end{align*}
and, by the fact that $X_x$ is tangential to $\Sigma_{d(x)}$,
\begin{align*}
	g(Z,\nu)=0
\end{align*}
for $\nu$ unit vector field normal to $\partial \Omega$. Applying the Stokes theorem by Gaffney (\cite[Theorem 4.8]{bisterzo2022symmetry}), we get
\begin{align*}
	\int_\Omega v\ \textnormal{dv}_\Psi & =\int_\Omega \textnormal{div}_\Psi Z\ \textnormal{dv}_\Psi\\
	&= \int_{\partial \Omega} g(Z,\nu)\ \textnormal{da}_\Psi=0
\end{align*}
that, together with \eqref{Eq:VLeq0}, implies $v=0$ in $\overline{\Omega}$.

We have thus proved that $X_\alpha(u)\equiv 0$ in $\overline{\Omega}$ for every $\alpha\in A$. Thanks to the fact that $\mathcal{D}$ generates every tangent space to all leaves, it follows that $u$ is locally symmetric, and hence symmetric, on $\overline{\Omega}$.
\end{proof}

\subsection{Strongly stable solutions in non-homogeneous domains in warped product manifolds}
Now consider a weighted warped product manifold
\begin{align*}
M_\Psi=(I\times_\sigma N)_\Psi
\end{align*}
where $I\subseteq \rr$ is an interval, $(N,g^N)$ is a (possibly noncompact) Riemannian manifold without boundary and $\Psi$ is a smooth weight function of the form
\begin{align*}
\Psi(r,\xi)=	\Phi(r)+\Gamma(\xi)
\end{align*}
for $(r,\xi)\in I\times N$. The second result we want to deal with concerns the case when the domain is an annulus in $\overline{A}(r_1,r_2)\subseteq M$ and there are not enough Killing vector fields tangential to $N$ (and thus there are not enough local isometries acting on the leaves of the annulus).

Despite this lack of symmetries on the domain, in \cite[Theorem 6.5]{bisterzo2022symmetry} the authors showed that, requiring the finiteness of $\textnormal{vol}_\Gamma (N)$, some potential theoretic tools can be used to recover a symmetry result under a stability-like assumption on the solution. More in details, they showed that if $f'(t)\leq 0$ and $u$ is a solution to
\begin{align*}
\system{ll}{\Delta_\Psi u=f(u) & \inn A(r_1,r_2) \\ u\equiv c_1 & \onn \{r_1\}\times N \\  u\equiv c_2 & \onn \{r_2\}\times N }
\end{align*}
so that $\norm{u}{C^2_{rad}}<+\infty$ and $f'(u)\geq -B$ for some nonnegative constant $B$ satisfying
\begin{align}\label{Eq:ConditionB}
0\leq B < \left(\int_{r_1}^{r_2} \frac{\int_{r_1}^s e^{-\Phi(z) \sigma^{m-1}(z) \dint{z}}}{e^{-\Phi(s)} \sigma^{m-1}(s)} \dint{s} \right)^{-1},
\end{align}
then $u(r,\xi)=\widehat{u}(r)$ is symmetric.

\begin{remark}
As already observed by the authors, as a consequence of condition \eqref{Eq:ConditionB} we get the existence of a positive smooth supersolution of the stability operator $-\Delta_\Psi+f'(u)$ in $\textnormal{int} M$, that implies the stability of the solution $u$.
\end{remark}

We stress that, as already claimed, the second result we present in this section is based on some potential theoretic tools. The first notion we need is Neumann-counterpart of the Dirichlet parabolicity. We say that a connected weighted Riemannian manifold $M_\Psi$ with (possibly empty) boundary $\partial M$ is \textit{Neumann parabolic} (or $\mathcal{N}$-\textit{parabolic}) if for any given $u\in C^0(M)\cap W^{1,2}\loc(\textnormal{int} M, \textnormal{dv}_\Psi)$ satisfying
\begin{align*}
\system{ll}{\Delta_\Psi u \geq 0 & \inn \textnormal{int}M \\ \partial_{\nu}u\leq 0 & \onn \partial M \\ \sup_M u<+\infty}
\end{align*}
it holds
\begin{align*}
u\equiv const.,
\end{align*}
where $\nu$ is the outward pointing unit normal to $\partial M$. In the case $\partial M=\emptyset$, the normal derivative condition is void.
\bigskip

As an application of Theorem \ref{Thm:UnboundedMaximumPrinciple}, we can replace \eqref{Eq:ConditionB} in \cite[Theorem 6.5]{bisterzo2022symmetry} with the (simpler) strong stability condition of $u$. Moreover, we only need the manifold $N_\Gamma$ to be parabolic, avoiding the assumption on the finiteness of its volume (originally required in \cite{bisterzo2022symmetry}).

\begin{theorem}\label{Thm:Symmetry2}
Let $M_\Psi=(I\times_\sigma N)_\Psi$ where $(N,g^N)$ is a complete (possibly noncompact), connected, $(n-1)$-dimensional Riemannian manifold without boundary. Moreover, assume that $N_\Gamma$ is parabolic.

Let $u\in C^4\left(\overline{A}(r_1,r_2)\right)$ be a solution of the Dirichlet problem
\begin{align*}
\system{ll}{\Delta_\Psi u=f(u) & \inn A(r_1,r_2) \\ u\equiv c_1 & \onn \{r_1\}\times N \\  u\equiv c_2 & \onn \{r_2\}\times N }
\end{align*}
where $c_j\in \rr$ are given constants and the function $f(t)$ is of class $C^2$ and satisfies $f''(t)\leq 0$. If $u$ is strongly stable and
\begin{align*}
\norm{u}{C^2_{rad}}=\sup_{A(r_1,r_2)} |u|+\sup_{A(r_1,r_2)} |\partial_r u|+\sup_{A(r_1,r_2)} |\partial_r^2 u|<+\infty,
\end{align*}
then $u(r,\xi)=\widehat{u}(r)$ is symmetric.
\end{theorem}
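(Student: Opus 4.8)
The plan is to transplant the scheme of \cite[Theorem 6.5]{bisterzo2022symmetry}, using Theorem \ref{Thm:UnboundedMaximumPrinciple} where \cite{bisterzo2022symmetry} used the barrier produced by \eqref{Eq:ConditionB}, and replacing the fibre–averaging step (which needed $\textnormal{vol}_\Gamma(N)<+\infty$) by a capacity argument resting on the parabolicity of $N_\Gamma$. First I set $\Omega:=A(r_1,r_2)=(r_1,r_2)\times_\sigma N$; after enlarging $I$ if necessary this is a \emph{smooth unbounded} domain of the complete manifold $M$, with $\partial\Omega=\{r_1,r_2\}\times N$. Writing $n=\dim M$, the weighted Laplacian has the warped form
\[
\Delta_\Psi w=\partial_r^2 w+\Big((n-1)\tfrac{\sigma'}{\sigma}-\Phi'\Big)\partial_r w+\tfrac{1}{\sigma^2}\,\Delta_\Gamma^N w ,
\]
so $\Delta_\Psi$ is an operator of the form \eqref{Eq:Operl} with $A=\textnormal{Id}$ and drift $B=-\nabla\Psi=\nabla(-\Psi)$ (hence $\eta=-\Psi$; the standing bounds on the coefficients hold under mild regularity of $\Psi$), and strong stability of $u$ reads $\lambda_1:=\lambda_1^{-\Delta_\Psi+f'(u)}(\Omega)>0$. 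If $u$ is constant there is nothing to prove, so assume otherwise; the goal is to show $u$ is radial.

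Next I would produce the radial profile $\bar u=\widehat u(r)$ on $[r_1,r_2]$ solving $\widehat u''+\big((n-1)\tfrac{\sigma'}{\sigma}-\Phi'\big)\widehat u'=f(\widehat u)$ with $\widehat u(r_i)=c_i$; on the bounded interval $[r_1,r_2]$ this follows from a sub/supersolution (or degree) argument, the a priori bound $\sup_{\overline\Omega}|u|<+\infty$ trapping $\bar u$ (the construction of \cite[\S6]{bisterzo2022symmetry} can also be quoted). Regarding $\bar u$ as a function on $\overline\Omega$, put $w:=u-\bar u\in C^2(\overline\Omega)$, so that $w\le 0$ on $\partial\Omega$ and $\sup_{\overline\Omega}|w|<+\infty$. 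Since $\Delta_\Psi\bar u=f(\bar u)$ and $f$ is concave ($f''\le 0$), the tangent line at $u$ lies above the graph, $f(\bar u)\le f(u)+f'(u)(\bar u-u)$, whence
\[
\big(\Delta_\Psi-f'(u)\big)w=f(u)-f(\bar u)-f'(u)(u-\bar u)\ge 0\qquad\textnormal{in }\Omega .
\]
Thus $w$ is a bounded–above subsolution of $\Delta_\Psi-f'(u)$, whose generalized principal eigenvalue $\lambda_1^{-\Delta_\Psi+f'(u)}(\Omega)$ is positive; as $w\le 0$ on $\partial\Omega$, Theorem \ref{Thm:UnboundedMaximumPrinciple} yields $w\le 0$, i.e. $u\le\bar u$ on $\overline\Omega$.

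It remains to prove $u\ge\bar u$, equivalently that $z:=\bar u-u\ge 0$ is radial (then $u=\bar u-z(r)$ is radial, hence symmetric). This is where parabolicity of $N_\Gamma$ enters. In \cite[Theorem 6.5]{bisterzo2022symmetry} one took $\bar u$ to be the $\textnormal{dv}_\Gamma$–average of $u$ over the fibres, which forces $\int_N z(r,\cdot)\,\textnormal{dv}_\Gamma=0$ and hence $z\equiv 0$; since here $N_\Gamma$ may have infinite volume I would instead pick cut-offs $\{\rho_k\}\subset C^\infty_c(N)$ with $0\le\rho_k\le 1$, $\rho_k\uparrow 1$ and $\int_N|\nabla^N\rho_k|^2\,\textnormal{dv}_\Gamma\to 0$ (such cut-offs exist precisely because $N_\Gamma$ is parabolic), extend them to $\Omega$ by $\rho_k(r,\xi):=\rho_k(\xi)$, and note $|\nabla_\Omega\rho_k|^2=\sigma^{-2}|\nabla^N\rho_k|^2$, so that — $\sigma,\Phi,\Phi'$ being bounded on $[r_1,r_2]$ and $z$ being bounded —
\[
\int_\Omega z^2\,|\nabla_\Omega\rho_k|^2\,\textnormal{dv}_\Psi\ \le\ C\,\Big(\sup_{\overline\Omega}z^2\Big)(r_2-r_1)\int_N|\nabla^N\rho_k|^2\,\textnormal{dv}_\Gamma\ \xrightarrow[k\to\infty]{}\ 0 .
\]
Testing the identity $\Delta_\Psi z=f(\bar u)-f(u)$ against $z\rho_k^2$ and the strong-stability inequality against $z\rho_k$, using concavity to bound $f(\bar u)-f(u)$ and the nonnegative quantity $f'(u)-f'(\bar u)$ (both $O(z)$, since $u\le\bar u$ and $f''\le 0$), and letting $k\to\infty$ so that the error terms above drop out, the positivity $\lambda_1>0$ is played against the sign of the surviving terms to force $\nabla^N z\equiv 0$ on $\Omega$. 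Hence $u=\bar u-z(r)=\widehat u(r)$ is symmetric.

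I expect the genuine difficulty to be concentrated in this last step: closing the capacity argument so that the limit $k\to\infty$ really yields $\nabla^N z\equiv 0$, the delicate point being that $\textnormal{vol}_\Gamma(N)$ may be infinite, so there is no a priori $L^2$–control on $z$ and the estimates must be run using only boundedness of $z$, concavity of $f$, and the gap provided by $\lambda_1>0$. A secondary technical matter is the construction of the radial profile $\bar u$ with the bounds needed to invoke Theorem \ref{Thm:UnboundedMaximumPrinciple}; the comparison $u\le\bar u$ itself, by contrast, is a direct application of the machinery already developed in this paper.
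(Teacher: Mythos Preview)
Your route diverges substantially from the paper's, and the step you flag as ``the genuine difficulty'' is indeed a gap that your sketch does not close. The paper never introduces a radial comparison profile $\bar u$ at all. Instead it sets $v:=\Delta^N_\Gamma u$, which vanishes on $\partial\Omega$ because $u$ is constant on each boundary leaf. The commutation $[\Delta^M_\Psi,\Delta^N_\Gamma]=0$ and $f''\le 0$ give
\[
\Delta^M_\Psi v=\Delta^N_\Gamma f(u)=f''(u)\,|\nabla^N u|^2_N+f'(u)\,v\le f'(u)\,v,
\]
so $-v$ is a bounded subsolution of $\Delta_\Psi-f'(u)$ (boundedness of $v$ follows from the $C^2_{rad}$ bound via the warped formula for $\Delta_\Psi$), and a \emph{single} application of Theorem \ref{Thm:UnboundedMaximumPrinciple} yields $v\ge 0$. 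Parabolicity of $N_\Gamma$ then gives $\int_N \Delta^N_\Gamma u(r,\cdot)\,\textnormal{dv}_\Gamma=0$ for each fixed $r$ (this is \cite[Proposition 3.1]{GRIGORYAN2013607} together with \cite[Lemma 6.12]{bisterzo2022symmetry}), so $v\equiv 0$; finally each slice $u(r,\cdot)$ is a bounded $\Gamma$-harmonic function on the parabolic manifold $N_\Gamma$, hence constant. No two-sided comparison, no auxiliary ODE, no capacity cut-offs are needed.

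Your scheme, by contrast, has two real problems. First, the existence of a radial solution $\bar u$ to the nonlinear two-point boundary value problem is not supplied by the hypotheses: there is no sign or monotonicity assumption on $f$ that would produce ordered sub/supersolutions, and \cite[\S6]{bisterzo2022symmetry} does not furnish this either. Second, and more seriously, after $u\le\bar u$ your plan for the reverse inequality does not go through. The function $z=\bar u-u\ge 0$ satisfies $(\Delta_\Psi-f'(u))z\le 0$, the wrong sign for Theorem \ref{Thm:UnboundedMaximumPrinciple}; switching to the potential $f'(\bar u)$ gives a subsolution but, since $f'(\bar u)\le f'(u)$, the variational characterization yields $\lambda_1^{-\Delta_\Psi+f'(\bar u)}(\Omega)\le\lambda_1^{-\Delta_\Psi+f'(u)}(\Omega)$, so positivity is lost. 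As for the capacity argument, testing the equation for $z$ against $z\rho_k^2$ and the stability inequality against $z\rho_k$ and combining gives
\[
\int_\Omega\big[f'(u)z-(f(\bar u)-f(u))\big]z\rho_k^2\,\textnormal{dv}_\Psi+\int_\Omega z^2|\nabla\rho_k|^2\,\textnormal{dv}_\Psi\ \ge\ \lambda_1\int_\Omega z^2\rho_k^2\,\textnormal{dv}_\Psi,
\]
but the bracket is \emph{nonnegative} by concavity, so the inequality carries no information; and with $\textnormal{vol}_\Gamma(N)=+\infty$ you have no $L^2$ control to make either side finite in the limit. ``Playing $\lambda_1>0$ against the sign of the surviving terms'' is not a proof here. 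The paper's device of differentiating along the fibre via $\Delta^N_\Gamma$ avoids all of this at once.
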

%

\begin{proof}[Proof of Theorem \ref{Thm:Symmetry2}]
Let us consider the function 
\begin{align*}
v(r,\xi):=\Delta^N_{\Gamma} u(r,\xi)
\end{align*}
which vanishes on $\partial A(r_1,r_2)$. By a direct calculation we have $[\Delta^M_\Psi, \Delta^N_\Gamma]=0$, that implies
\begin{align*}
\Delta^M_\Psi v&=\Delta^N_\Gamma f(u)\\
&=f''(u) |\nabla^N u|^2_N+f'(u)v\\
&\leq f'(u) v.
\end{align*}
It follows that $v$ satisfies
\begin{align*}
\system{ll}{\Delta_\Psi(-v)\geq f'(u) (-v) & \inn A(r_1,r_2) \\ -v=0 & \onn \partial A(r_1,r_2)}.
\end{align*}
and, using the strong stability assumption on $u$, by Theorem \ref{Thm:UnboundedMaximumPrinciple} we get
\begin{align}\label{Eq:Symmetry2App1}
v\geq 0 \ \inn A(r_1,r_2).
\end{align}
On the other hand, thanks to the parabolicity of $N_\Gamma$, we can apply \cite[Proposition 3.1]{GRIGORYAN2013607} and \cite[Lemma 6.12]{bisterzo2022symmetry} obtaining
\begin{align*}
\int_{A(r_1,r_2)} v\ \textnormal{dv}_\Psi & = \int_{r_1}^{r_2} \left(\int_{\{t\}\times N} \Delta^N_\Gamma u (t,\xi)\ \textnormal{dv}_\Gamma(\xi) \right) e^{-\Phi(t)} \sigma^{m-1}(t) \dint{t}=0
\end{align*}
that, together with \eqref{Eq:Symmetry2App1}, implies $v\equiv 0$ in $A(r_1,r_2)$.

It follows that for every fixed $\overline{r}\in [r_1,r_2]$ the function $\xi\mapsto v(\overline{r},\xi)$ is constant on $N$ and thus $\xi\mapsto u(\overline{r},\xi)$ is a bounded harmonic function on the parabolic manifold $N_\Gamma$. By definition of parabolicity, this implies that $u(\overline{r},\cdot)$ is constant in $N_\Gamma$, as claimed.
\end{proof}

\section*{Acknowledgements}
\noindent The author would like to thank Stefano Pigola for the several discussions and precious suggestions about the present work, Giona Veronelli for engaging in productive conversations about the ABP inequality and Alberto Farina for having introduced the author to the Euclidean results that inspired Section \ref{Sec:Maximum principle and cone}. The author acknowledges the support of the GNAMPA (INdAM) project ``Applicazioni geometriche del metodo ABP''.

\bibliographystyle{abbrv}
\bibliography{references}

\end{document}